\documentclass[11pt,a4paper,reqno]{amsart}

\usepackage[a4paper,left=1in,right=1in,top=1.4in,bottom=1in]{geometry}
\usepackage{amsmath,amssymb,amsfonts,mathtools}
\usepackage{microtype}
\usepackage{hyperref}
\usepackage{enumitem}
\usepackage{upgreek}
\usepackage{bm}
\hypersetup{
  colorlinks=true,
  linkcolor=blue,
  citecolor=blue,
  urlcolor=blue
}

\newtheorem{theorem}{Theorem}[section]
\newtheorem{corollary}[theorem]{Corollary}
\newtheorem{lemma}[theorem]{Lemma}
\newtheorem{proposition}[theorem]{Proposition}
\theoremstyle{definition}

\theoremstyle{remark}
\newtheorem{remark}[theorem]{Remark}

\numberwithin{equation}{section}

\newcommand{\C}{\mathbb C}
\newcommand{\R}{\mathbb R}
\newcommand{\F}{\mathcal F}
\newcommand{\dd}{\,d}
\newcommand{\e}{\mathrm e}
\newcommand{\Real}{\operatorname{Re}}
\newcommand{\ip}[2]{\left\langle #1,#2\right\rangle}

\allowdisplaybreaks[4] 

\title[Gaussian Integral Operator Between Fock Spaces]{Boundedness and Compactness  of   Gaussian Integral Operators between Fock Spaces}

\author{Yuanqi Sang}
\address{School of Mathematics, Southwestern University of Finance and Economics,
Chengdu, 611130, China}
\email{sangyq@swufe.edu.cn}

\author{Liankuo Zhao}
\address{School of Mathematical Sciences, Shanxi Normal University,
  Taiyuan, 030031, China}
\email{lkzhao@sxnu.edu.cn}

\date{}
\makeatletter
\@ifundefined{subjclassname@2020}{%
  \@namedef{subjclassname@2020}{\textup{2020} Mathematics Subject Classification}%
}{}
\makeatother
\subjclass[2020]{Primary 30H20; Secondary 47G10.}
\keywords{Fock space, integral operator, atomic decomposition, sampling sequence}

\begin{document}
\begin{abstract}
We study Gaussian integral operators
\[(Tf)(z)=\frac1\pi\int_{\mathbb C} e^{\alpha z^2+\beta\overline w^{\,2}+\gamma z\overline w+\eta z+\delta\overline w} f(w)e^{-|w|^2}\,dA(w)\]
acting between Fock spaces $\mathcal F^p$ and $\mathcal F^q$, where $0<p,q\leq\infty$. We give complete characterizations of their boundedness and compactness in terms of explicit conditions on the parameters of the Gaussian kernel.
The proofs are based on a unified approach combining reproducing-kernel estimates, atomic decompositions, and sampling sequences, together with a reduction to diagonal operators on sequence spaces.
We also study Schatten-class membership on $\mathcal F^2$ and compute the Hilbert--Schmidt norm explicitly.
\end{abstract}

\maketitle

\section{Introduction}

Let \(\C\) be the complex plane and \(d A\) be the Lebesgue area measure on \(\C\). For \(0<p< \infty\), the Fock space \(\F^p\) consists of all entire functions \(f\) with (quasi-)norm
\[
\|f\|_p=\left(\frac{p}{2\pi}\int_{\C}
  |f(z)\e^{-|z|^2/2}|^p d A(z)\right)^{\frac{1}{p}}.
\]
For \(p=\infty\), the Fock space \(\F^\infty\) consists of all entire functions \(f\) with norm
\[
\|f\|_\infty= \sup_{z\in\C}|f(z)\e^{-|z|^2/2}|.
\]
The Fock spaces arise from Bargmann's realization of quantum mechanical
state spaces by entire functions \cite{Bargmann}. They provide a setting in
which reproducing kernels, Gaussian integration, and concrete operator
models can be studied together. We refer to \cite{Zhu2012} for their function
theory and operator theory.

In \cite{DZ1, DZ2}, Dong and Zhu  introduced and studied a two-parameter family of integral
operators on the Fock space \(\F^2\).
Motivated by their work, in this paper, we consider  a class of integral operators \(T\) with more parameters,
\[
(Tf)(z)=\frac1\pi\int_{\C}
  \exp\bigl(\alpha z^2+\beta\bar w^2+\gamma z\bar w+\eta z+\delta\bar w\bigr)
  f(w)\e^{-|w|^2} d A(w),\qquad z\in \C,
\]
where
\(\alpha,\beta,\gamma,\eta,\delta\in\C.\)
The integral operator studied in \cite{DZ1} corresponds, up to a nonzero scalar factor, to \(T\) with  the parameters
\(
 \alpha=t/(2s), \beta=-t/(2s),
 \gamma=1/s, \eta=\delta=0,
\) where \(s\ne 0\).
On \(\F^2\), they characterize boundedness and compactness.
Their sequel \cite{DZ2} shows that boundedness on \(\F^p\) (\(0<p\le\infty\)) is independence of the
exponent, obtains singular values,
and proves membership in every Schatten class in the compact case. We will study,  in a unified way,  the boundedness and compactness of \(T\) from \(\F^p\) to \(\F^q\)
for all \(0<p,q\le\infty\).

The study of integral operators \(T\) can be traced back to Fourier integrals \cite{Neretin}. Such operators are called Gaussian integral operators and belong to a broader analytic tradition.
Lieb \cite{Lieb} studies sharp bounds and Gaussian extremizers for integral
operators with Gaussian kernels on real \(L^p\) spaces, notably for
\(1<p\le q<\infty\). The phase-space setting provides a further motivation.
Quadratic and linear
terms are natural in the study of oscillator evolution and phase-space
displacements, and Gaussian kernels also occur in the oscillator
semigroup \cite{Howe}. Under the Bargmann transform, canonical Gaussian kernels represent linear
canonical transformations; see \cite{DZ1,Folland}.
There is also a direct connection with weighted composition operators on Fock spaces. When
\(\beta=0\), the reproducing formula gives
\[
 (Tf)(z)=\exp(\alpha z^2+\eta z)\,f(\gamma z+\delta), \qquad z\in\C.
\]
This is a very important class of weighted composition operators. Indeed,  unitary, normal,  and complex symmetric weighted composition operators on \(\F^2\) all have this form \cite{HK, SZ, Zhao2014, Zhao2017}.

For the convenience of presenting our main results, we introduce some notation.

Let \(\mathbb R\) be the set of real numbers. For \(z\in\C\), \(\Real z\) and \(\operatorname{Im} z\) are the real part and imaginary part of \(z\) respectively.
For  \(|\alpha|<1/2\), \(|\beta|<1/2\), denote
\begin{align}\label{eq:NalphaMalpha}
N_\alpha=\bar\beta+\frac{\alpha\bar\gamma^2}{1-4|\alpha|^2},\quad
  M_\alpha=\bar\delta+\frac{\bar\gamma\eta+2\alpha\bar\gamma\bar\eta}{1-4|\alpha|^2}
\end{align}
and
\begin{eqnarray}\label{eq:Thetaalpha}
\begin{array}{c}
\Theta_\alpha(w)=\frac{1}{2}\left(\frac{|\gamma|^2}{1-4|\alpha|^2}-1
  \right)|w|^2+\Real\left(N_\alpha w^2+M_\alpha w\right),\\[1em]
\mathcal R(w)=\e^{\Theta_\alpha(w)},
\end{array}
\qquad w\in\C.
\end{eqnarray}
Similarly, denote
\begin{align}\label{eq:NbetaMbeta}
  N_\beta=\alpha+\frac{\bar\beta\gamma^2}{1-4|\beta|^2},\qquad
  M_\beta=\eta+\frac{\gamma\bar\delta+2\bar\beta\gamma\delta}{1-4|\beta|^2}
\end{align}
and
\begin{eqnarray}\label{eq:Thetabeta}
\begin{array}{c}
  \Theta_\beta(z)=\frac{1}{2}\left(\frac{|\gamma|^2}{1-4|\beta|^2}-1\right)|z|^2
  +\Real\left(N_\beta z^2+M_\beta z\right),\\[1em]
\mathcal C(z)=\e^{\Theta_\beta(z)},
\end{array}
\qquad z\in\C.
\end{eqnarray}

The main results read
as follows.

\begin{theorem}\label{thm:fp-fq}
Assume \(\gamma\neq0\). Let \(0<p\le q\le\infty\). Then the following are
equivalent:
\begin{itemize}
\item[\textup{(i)}] \(T:\F^p\to\F^q\) is bounded;
\item[\textup{(ii)}]  \(|\alpha|<1/2,\quad  |\beta|<1/2\) and \(\sup_{w\in\C}\mathcal R(w)<\infty\);
\item[\textup{(iii)}] \(|\alpha|<1/2,\quad  |\beta|<1/2\) and \(\sup_{z\in\C}\mathcal C(z)<\infty\).
\end{itemize}
\end{theorem}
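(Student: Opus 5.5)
The plan is to reduce everything to the two-variable function
\[
\Phi(z,w)=\Real\bigl(\alpha z^2+\beta\bar w^2+\gamma z\bar w+\eta z+\delta\bar w\bigr)-\tfrac12|z|^2-\tfrac12|w|^2 ,
\]
which is the logarithm of the kernel modulus normalized by the Gaussian weights in both variables.

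\textbf{Step 1: (ii)$\Leftrightarrow$(iii).} Assume $|\alpha|<1/2$. For fixed $w$, the map $z\mapsto\Phi(z,w)$ is a strictly concave real quadratic in $z$. Maximizing it (a routine $2\times2$ real computation with $b=\gamma\bar w+\eta$) should give exactly
\[
\sup_z\Phi(z,w)=\Theta_\alpha(w).
\]
The formulas for $N_\alpha$ and $M_\alpha$ in \eqref{eq:NalphaMalpha} are the output of this maximization. Symmetrically, if $|\beta|<1/2$, then $\sup_w\Phi(z,w)=\Theta_\beta(z)$. Since $\sup_w\sup_z=\sup_z\sup_w$, under $|\alpha|,|\beta|<1/2$ both (ii) and (iii) are equivalent to $\sup_{\C^2}\Phi<\infty$.

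\textbf{Step 2: (i)$\Rightarrow$(ii).} Write $(Tf)(z)=e^{\alpha z^2+\eta z}\langle f,g_z\rangle$ with $g_z(w)=\exp(\bar\beta w^2+\bar\gamma\bar z w+\bar\delta w)$, and test $T$ on normalized kernels $k_a$. By the reproducing property,
\[
TK_a(z)=\exp\bigl(\alpha z^2+\eta z+\beta\bar a^2+\gamma z\bar a+\delta\bar a\bigr).
\]
This function belongs to $\F^q$ only if $|\alpha|<1/2$; the case $|\alpha|=1/2$ is excluded because then the exponent fails to decay in some direction, using $\gamma\ne0$ to kill the possible cancellation. An explicit Gaussian integral (or, for $q=\infty$, the computation of Step 1) should then give $\|Tk_a\|_q\asymp \mathcal R(a)$ up to constants depending only on $q,\alpha$. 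Boundedness therefore forces $\sup\mathcal R<\infty$.

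To get $|\beta|<1/2$, I would apply the same argument to the formal adjoint. Its kernel has the same form with $(\alpha,\beta,\gamma,\eta,\delta)$ replaced by $(\bar\beta,\bar\alpha,\bar\gamma,\bar\delta,\bar\eta)$, and it maps $\F^{q'}\to\F^{p'}$ in the Banach range, or is handled through the $\F^\infty$/$\F^1$ pairing when $p\le1$. Alternatively, one can argue directly: boundedness of $f\mapsto\langle f,g_z\rangle$ on $\F^p\supset$ polynomials requires $g_z\in\F^\infty$ with control uniform in $z$, and $|\beta|=1/2$ is ruled out again using $\gamma\ne0$. I expect this boundary case to require some care.

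\textbf{Step 3: (ii)$\Rightarrow$(i).} This is the main step and the one I expect to be the main obstacle, since a crude Schur-type bound $|K(z,w)|\le Ce^{|z|^2/2+|w|^2/2}$ only gives $\F^1\to\F^\infty$. I would instead use the structure of $\Phi$. Since $\Phi$ is a real quadratic form plus linear terms bounded above, its quadratic part is negative semidefinite, and $\Phi$ attains its maximum along an affine real subspace.

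\begin{enumerate}[label=(\alph*)]
\item Compose $T$ on both sides with operators that are bounded and invertible on every $\F^p$: Weyl translations $f\mapsto k_a\,f(\cdot-a)$, rotations $f\mapsto f(e^{i\theta}\cdot)$, and, if needed, the bounded metaplectic-type Gaussian operators of Dong--Zhu \cite{DZ1,DZ2}. The aim is to remove $\eta,\delta$ and bring $\alpha,\beta,\gamma$ into a normal form.
\item Show that in the normal form $T$ acts diagonally, $z^n\mapsto c\lambda^n z^n$ with $|\lambda|\le1$. The condition $\sup\Phi<\infty$ should translate exactly into $|\lambda|\le1$.
\item Such diagonal operators, namely $f\mapsto f(\lambda\cdot)$, are bounded on each $\F^p$ for $|\lambda|\le 1$.
\item Combine this with the continuous inclusion $\F^p\subset\F^q$ for $p\le q$.
\end{enumerate}

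If the factorization proves messy, the fallback is an atomic decomposition $f=\sum c_n k_{a_n}$ with $(c_n)\in\ell^p$, together with a sampling sequence for $\F^q$. This reduces the problem to a matrix operator on sequence spaces, whose entries $\langle Tk_{a_n},k_{b_m}\rangle$ decay like $e^{\Phi}$ times a Gaussian off the maximizing subspace. That matrix is then dominated by a sum of shifted diagonal operators, bounded $\ell^p\to\ell^q$ for $p\le q$.
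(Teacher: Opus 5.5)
Your Steps 1 and 2 are correct and coincide with the paper's Lemma~\ref{lem:R-C-equivalence} and Lemma~\ref{lem:necessary}. Two small corrections there. First, completing the square gives $\sup_z\Phi(z,w)=\Theta_\alpha(w)+\log c_\alpha$, with $c_\alpha$ as in \eqref{eq:row-functions}, rather than $\Theta_\alpha(w)$ exactly; this is harmless. Second, for $|\beta|<1/2$ no uniformity in $z$ is needed. It suffices that $g_z\in\F^\infty$ for each fixed $z$, and then to choose $z$ so that the linear coefficient $\bar\gamma\bar z+\bar\delta$ is incompatible with $|\beta|=1/2$.

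\textbf{The gap: the diagonal normal form in Step~3(b) does not exist in general.} Take $a>0$ and
\[
\alpha=\beta=-\tfrac{a}{2(1+a)},\qquad \gamma=\tfrac{1}{1+a},\qquad \eta=\delta=0 .
\]
Up to a constant, this $T$ is the Bargmann image of multiplication by $\e^{-ax^2}$ on $L^2(\R)$. Then $|\gamma|^2/(1-4|\alpha|^2)=1/(1+2a)$, $N_\alpha=-a/(1+2a)$ and $M_\alpha=0$. Hence $\mathcal R(w)=\exp\bigl(-\tfrac{2a}{1+2a}(\Real w)^2\bigr)$ is bounded, and $T$ is bounded on $\F^2$.
\begin{itemize}
\item $T$ is not compact: by Lemma~\ref{lem:kernels}, $\|Tk_{iy}\|_2=C_{\alpha,2}$ for every real $y$, while $k_{iy}\to0$ weakly.
\item $T$ is not bounded below: $\|Tk_x\|_2=C_{\alpha,2}\mathcal R(x)\to0$ as $x\to+\infty$.
\end{itemize}
Now consider $U(cD_\lambda)V$ with $c\neq0$ and $U,V$ bounded and invertible; Weyl operators, rotations and unitary metaplectic operators all qualify. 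Such an operator is compact if $|\lambda|<1$ and invertible if $|\lambda|=1$. So this $T$ admits no such factorization. More generally, since $\|Tk_w\|_2=C_{\alpha,2}\mathcal R(w)$, any $T$ for which $\mathcal R$ is bounded but neither tends to $0$ nor stays away from $0$ is out of reach of (b). These non-diagonalizable boundary elements are exactly where the theorem has content beyond the compact and invertible cases.

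\textbf{Your fallback is the paper's route, but it is missing the estimate that closes it.} Completing the square in $z$ gives
\[
|B(z,w)|=c_\alpha\mathcal R(w)\exp\Bigl(-\tfrac12(z-L_\alpha w-d_\alpha)^{\mathsf T}H_\alpha(z-L_\alpha w-d_\alpha)\Bigr),
\]
with $L_\alpha=H_\alpha^{-1}G_\gamma$ and $\det L_\alpha=|\gamma|^2/(1-4|\alpha|^2)\neq0$. This determinant is where $\gamma\neq0$ enters. Consequently the lattice entries satisfy
\[
|B(\mu,\lambda)|=|\ip{Tk_\lambda}{k_\mu}|\le c_\alpha\mathcal R(\lambda)\,\e^{-\frac{1-2|\alpha|}{2}|\mu-L_\alpha\lambda-d_\alpha|^2}.
\]
Because $L_\alpha$ is invertible, both $\sup_\lambda\sum_\mu$ and $\sup_\mu\sum_\lambda$ of the Gaussian factor are finite. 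H\"older's inequality, or the $q$-triangle inequality when $q<1$, then gives, for $p\le q$,
\[
\|A\zeta\|_{\ell^q}\le C\,\|\{\mathcal R(\lambda)\zeta_\lambda\}\|_{\ell^q}\le C\sup\mathcal R\,\|\zeta\|_{\ell^p}.
\]
Decay off the maximizing subspace alone would not give finite row and column sums. You need that this subspace lies inside the graph of the invertible affine map $w\mapsto L_\alpha w+d_\alpha$.

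Finally, passing from $A$ back to $T$ uses density of finitely supported sequences, so the case $p=q=\infty$ needs a separate argument. The paper uses the bound $|Tf(z)|\e^{-|z|^2/2}\le 2C_{\beta,1}\|f\|_\infty\,\mathcal C(z)$ from Lemma~\ref{lem: B-integral}. This is where condition (iii) is used directly.
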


\begin{theorem}\label{thm:fp-fq compact}
Assume \(\gamma\neq0\). Let \(0<p\le q\le\infty\). Then the following are
equivalent:
\begin{enumerate}
\item[\textup{(i)}] \(T:\F^p\to\F^q\) is compact;
\item[\textup{(ii)}] \(|\alpha|<1/2,\quad  |\beta|<1/2\) and \(\mathcal R(w)\to 0\) as \(|w|\to\infty\);
\item[\textup{(iii)}]  \(|\alpha|<1/2,\quad  |\beta|<1/2\) and \(\mathcal C(z)\to 0\) as \(|z|\to\infty\).
\end{enumerate}
Equivalently, either of the following strict inequalities hold:
\begin{equation}\label{eq:strict-inequality}
\begin{split}
|\alpha|<1/2,\quad  |\beta|<1/2,\quad
 \frac{|\gamma|^2}{1-4|\alpha|^2}
 +2|N_\alpha|<1,\\
|\alpha|<1/2,\quad  |\beta|<1/2,\quad
 \frac{|\gamma|^2}{1-4|\beta|^2}
 +2|N_\beta|<1.
\end{split}
\end{equation}

\end{theorem}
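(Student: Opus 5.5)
We will follow the route used for Theorem~\ref{thm:fp-fq}, replacing boundedness of the diagonal model by its vanishing at infinity. The first step is to make sense of the condition $|\alpha|<1/2$, $|\beta|<1/2$ and to simplify $\mathcal R$. Compactness implies boundedness, so by Theorem~\ref{thm:fp-fq} we may assume these inequalities throughout. The function $\Theta_\alpha$ is a real quadratic form $\tfrac12(\kappa-1)|w|^2+\Real(N_\alpha w^2)$ plus a linear term, where $\kappa=|\gamma|^2/(1-4|\alpha|^2)$. Rotating $w$ so that $N_\alpha w^2$ is real, the quadratic part has eigenvalues $\tfrac12(\kappa-1)\pm|N_\alpha|$. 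Hence:
\begin{itemize}
\item $\mathcal R(w)\to0$ as $|w|\to\infty$ if and only if both eigenvalues are negative, i.e. $\kappa+2|N_\alpha|<1$; the linear term is irrelevant once the form is negative definite.
\item Conversely, if $\kappa+2|N_\alpha|\ge1$, then along the eigendirection with the larger eigenvalue $\Theta_\alpha$ is either unbounded or bounded below. In the degenerate case it is affine along a line, and it stays bounded only when the linear coefficient vanishes there, in which case $\mathcal R$ is constant along that line.
\end{itemize}
This gives (ii)$\Leftrightarrow$ the first strict inequality in \eqref{eq:strict-inequality}, and the same computation gives (iii)$\Leftrightarrow$ the second.

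For (ii)$\Leftrightarrow$(i) I would reuse the machinery behind Theorem~\ref{thm:fp-fq}. First compute $T$ on normalized reproducing kernels $k_a$ by completing the square in the Gaussian integral; the integral is finite because $|\beta|<1/2$. This gives
\[
\|Tk_a\|_{q}\asymp \mathcal R(a)
\]
up to constants depending only on the parameters. Presumably this is exactly how $\mathcal R$ arose, via $T^*K_z$ or $TK_a$; the quantity $\mathcal C$ plays the same role for $T^*$.

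\textbf{(i)$\Rightarrow$(ii).} Suppose $p<\infty$. Then $k_a\to0$ weakly in $\F^p$, or uniformly on compacta with bounded norm when $p\le1$. Compactness therefore forces $\|Tk_a\|_q\to0$, that is, $\mathcal R(a)\to0$. For $p=\infty$, a bounded sequence of normalized kernels still converges to $0$ uniformly on compact sets. Compactness on $\F^\infty$ can be characterized by sending such sequences to null sequences, using a standard Montel-type argument, so the same conclusion holds.

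\textbf{(ii)$\Rightarrow$(i).} Assume $\mathcal R(w)\to0$. Use the atomic decomposition $f=\sum c_n k_{a_n}$ with $\{c_n\}\in\ell^p$, together with a sampling sequence for $\F^q$. This factors $T$, as in the boundedness proof, through a matrix on sequence spaces whose entries are dominated by $\mathcal R(a_n)$ times a Gaussian off-diagonal decay; this is the reduction to diagonal operators. For $p\le q$ a diagonal operator $\ell^p\to\ell^q$ with entries tending to $0$ is compact, being a norm limit of finite-rank truncations. The off-diagonal Gaussian decay lets us control the tail $\sum_{|a_n|>R}$ by $\sup_{|w|>R}\mathcal R(w)$. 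Thus $T$ is a norm limit (or, for quasi-Banach $p<1$, a limit in operator quasi-norm) of finite-rank operators, hence compact. The equivalence (i)$\Leftrightarrow$(iii) then follows from (ii)$\Leftrightarrow$(iii), via the algebraic step above, both being equivalent to the same parameter conditions. Alternatively one can run the same argument for the adjoint-type formula.

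\textbf{Main obstacle.} The hardest point is to show that the two strict inequalities in \eqref{eq:strict-inequality} are genuinely equivalent to each other. I would derive this through the operator rather than purely by algebra: both are equivalent to compactness. A direct algebraic check, relating $|N_\alpha|$ and $|N_\beta|$ via the identity connecting the eigenvalues of the two quadratic forms, would be a useful sanity check. A second delicate point is the tail estimate for $p$ or $q$ less than $1$ or equal to $\infty$, where duality is unavailable and one must rely on the sampling-sequence estimates from the boundedness proof.
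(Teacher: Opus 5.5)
\paragraph{The gap: the link to (iii).}
The genuine gap is the passage to (iii) and to the second inequality in \eqref{eq:strict-inequality}. Your quadratic-form computation shows two things:
\begin{itemize}
\item (ii) is equivalent to $|\gamma|^2/(1-4|\alpha|^2)+2|N_\alpha|<1$;
\item (iii) is equivalent to $|\gamma|^2/(1-4|\beta|^2)+2|N_\beta|<1$.
\end{itemize}
These are different conditions. So deriving (i)$\Leftrightarrow$(iii) from (ii)$\Leftrightarrow$(iii), ``both being equivalent to the same parameter conditions'', proves nothing. Your fallback, ``both are equivalent to compactness'', is circular, because (i)$\Leftrightarrow$(iii) is never established independently.

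``Running the same argument for the adjoint-type formula'' characterizes compactness of $T^\#$, not of $T$. To pass from one to the other you need a duality statement that identifies $T^\#$ with an adjoint of $T$ and transfers compactness. You do not supply one, and it is delicate when $\F^q$ is only quasi-Banach or equals $\F^\infty$.

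\paragraph{How to close it.}
The missing idea is Lemma~\ref{lem:R-C-equivalence}. Completing the square in $z$ and in $w$ gives $\sup_z|B(z,w)|=c_\alpha\mathcal R(w)$, attained at $z=L_\alpha w+d_\alpha$, and $\sup_w|B(z,w)|=c_\beta\mathcal C(z)$. Hence $c_\alpha\mathcal R(w)=|B(L_\alpha w+d_\alpha,w)|\le c_\beta\,\mathcal C(L_\alpha w+d_\alpha)$. Since $\det L_\alpha=|\gamma|^2/(1-4|\alpha|^2)\neq0$ (one place where $\gamma\neq0$ enters), $\mathcal C\to0$ forces $\mathcal R\to0$, and the converse is symmetric.

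Alternatively, because (ii) and (iii) do not depend on $p,q$, you can compare them at $p=q=2$. There $T^*g(w)=\langle g,H(\cdot,w)\rangle=T^\#g(w)$, so $T$ is compact iff $T^\#$ is. Your (i)$\Leftrightarrow$(ii) argument applied to $T^\#$, whose $\mathcal R$-function is $\mathcal C$, then shows $T^\#$ is compact on $\F^2$ iff (iii) holds.

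\paragraph{The rest of your argument.}
What remains is a valid route that differs from the paper's.

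For (i)$\Rightarrow$(ii) you test on $k_a$. You use $\|Tk_a\|_q=C_{\alpha,q}\mathcal R(a)$ (Lemma~\ref{lem:kernels}; it is $c_\alpha\mathcal R(a)$ when $q=\infty$). A compact $T$ sends the bounded, locally uniformly null family $\{k_a\}$ to a norm-null family. Any norm limit of $Tk_{a_n}$ must vanish, since $Tk_a(z)=H(z,a)\e^{-|a|^2/2}\to0$ pointwise because $|\beta|<1/2$. This is shorter than the paper's use of the lattice lower bound in Lemma~\ref{lem:DlessA}.

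For (ii)$\Rightarrow$(i), let $P_R$ be the coordinate projection onto $\{|\lambda|\le R\}$. Lemma~\ref{lem:AlessD} applied to $(I-P_R)\zeta$ gives $\|A-AP_R\|_{\ell^p\to\ell^q}\le C\sup_{|\lambda|>R}\mathcal R(\lambda)$, so $A$ is compact. Two points need care here:
\begin{itemize}
\item Remark~\ref{rem:SpRp} provides no linear coefficient map, so do not claim $T$ itself is a limit of finite-rank operators. Instead transfer compactness via $T(\text{unit ball})\subset TS_p(C\cdot\text{unit ball})$ and the lower bound for $R_q$.
\item When $p=\infty$, check $R_\infty TS_\infty\zeta=A\zeta$ directly, since finitely supported sequences are not dense in $\ell^\infty$.
\end{itemize}
The paper's factorization $T=D_sT_s$ through a compact dilation avoids these technicalities. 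It also gives $s_n(T)\le\|T_s\|s^n$, which is used in Section~5.
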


\begin{theorem}\label{thm:fp-fq-compact-q-less-p}
Assume \(\gamma\neq0\). Let \(0<q<p\le\infty\). Then the following are
equivalent:
\begin{enumerate}
\item[\textup{(i)}]  \(T:\F^p\to\F^q\) is compact;
\item[\textup{(ii)}]  \(T:\F^p\to\F^q\) is bounded;
\item[\textup{(iii)}]  \(|\alpha|<1/2,\quad  |\beta|<1/2\) and \(\mathcal R(w)\to 0\) as \(|w|\to\infty\);
\item[\textup{(iv)}]  \(|\alpha|<1/2,\quad  |\beta|<1/2\) and \(\mathcal C(z)\to 0\) as \(|z|\to\infty\).
\end{enumerate}
Equivalently, the strict inequalities \eqref{eq:strict-inequality} displayed in
Theorem~\ref{thm:fp-fq compact} hold.
\end{theorem}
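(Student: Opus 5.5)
The plan is to establish (i)$\Rightarrow$(ii)$\Rightarrow$(iii)$\Leftrightarrow$(iv)$\Rightarrow$(i). Here (iii)$\Leftrightarrow$(iv) and the equivalence with \eqref{eq:strict-inequality} are exactly the parameter computations already behind Theorem~\ref{thm:fp-fq compact}. The implication (i)$\Rightarrow$(ii) is trivial. All the work is therefore in two steps: necessity, (ii)$\Rightarrow$(iii), and sufficiency, (iii)$\Rightarrow$(i) for $q<p$.

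\textbf{Reduction.} Since $\gamma\neq0$, I will complete the square in the kernel. The aim is to factor $T$, up to bounded invertible operators on every $\F^p$, as
\[
T=W_1\, D\, W_2 .
\]
Here $W_1,W_2$ are operators of the form $f\mapsto e^{az^2+bz}f(cz+d)$ that are unitary-type on Fock spaces (translations, rotations and metaplectic-type Gaussian multipliers, all bounded on each $\F^p$ once $|\alpha|,|\beta|<1/2$). The middle factor $D$ is a model operator whose kernel is $e^{\lambda z\bar w}$ times a Gaussian, and whose behaviour is governed by $\mathcal R$. Combining atomic decomposition with sampling sequences, $T$ then reduces to a diagonal-type operator
\[
(c_n)\mapsto \bigl(\mathcal R(w_n)\,c_n\bigr)
\]
acting from $\ell^p$ to $\ell^q$ on a lattice, up to bounded matrices with off-diagonal Gaussian decay. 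This is the same reduction to diagonal operators on sequence spaces announced in the abstract and presumably used for Theorems~\ref{thm:fp-fq} and~\ref{thm:fp-fq compact}.

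\textbf{Necessity (ii)$\Rightarrow$(iii).} Boundedness from $\F^p$ to $\F^q$ with $q<p$ implies boundedness from $\F^p$ to $\F^p$, because $\F^q\subset\F^p$ continuously. Theorem~\ref{thm:fp-fq} then gives $|\alpha|,|\beta|<1/2$ and $\sup\mathcal R<\infty$. To upgrade this to $\mathcal R\to0$, I will test $T$ on the randomized atomic sums
\[
f=\sum_n c_n r_n(t)\, k_{w_n},
\]
with Rademacher signs $r_n(t)$ and normalized kernels $k_{w_n}$. By Khintchine's inequality and the kernel estimates, $\|Tf\|_q^q$ averaged over $t$ dominates
\[
\sum_n |c_n|^q\,\mathcal R(w_n)^q ,
\]
while $\|f\|_p\lesssim\|c\|_{\ell^p}$. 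Taking the supremum over all $c\in\ell^p$ forces $(\mathcal R(w_n))\in\ell^{r}$ with $1/r=1/q-1/p$, and in particular $\mathcal R(w_n)\to0$. Because $\log\mathcal R=\Theta_\alpha$ is a real quadratic polynomial, boundedness of $\mathcal R$ together with decay along a lattice forces $\Theta_\alpha\to-\infty$ at infinity. Equivalently, the quadratic form $\frac12\bigl(\frac{|\gamma|^2}{1-4|\alpha|^2}-1\bigr)|w|^2+\Real(N_\alpha w^2)$ is negative definite. I expect this step to be the main obstacle: isolating the diagonal term from the off-diagonal contributions in $\|Tf\|_q$ requires $T k_w$ to be essentially localized near the image point of $w$. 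The localization has to be read off from the completed-square form, and the linear terms $M_\alpha$ create shifts that must be tracked.

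\textbf{Sufficiency (iii)$\Rightarrow$(i).} If the quadratic form is negative definite, then $\mathcal R(w)\le Ce^{-\epsilon|w|^2}$. In the diagonal model $(\mathcal R(w_n))\in\ell^r$ for every $r>0$, so $\ell^p\to\ell^q$ boundedness follows from Hölder's inequality. Compactness follows by truncating to finitely many lattice points: the truncated operators have finite rank and converge to $T$ in norm, since the tails $\|(\mathcal R(w_n))_{|n|>N}\|_{\ell^r}\to0$. Alternatively, one can bound the kernel directly by $|K(z,w)|\lesssim e^{|z|^2/2+|w|^2/2-\epsilon(|z|^2+|w|^2)}$ and apply a Schur/Hölder-type estimate. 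The case $p=\infty$ needs only the obvious modification of using $\sup$ norms for the input.
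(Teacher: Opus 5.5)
Your argument is correct, but on both nontrivial implications it takes a different route from the paper.

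\textbf{Necessity (ii)$\Rightarrow$(iii).} The paper does not randomize. It passes to the matrix $A=R_qTS_p$ via Proposition~\ref{prop:passage}. That proposition is valid only for $p<\infty$, so the case $p=\infty$ is first reduced to some $q<p_1<\infty$ through $\F^{p_1}\subset\F^\infty$. It then obtains the deterministic lower bound of Lemma~\ref{lem:DlessA}: split $\Lambda$ into widely separated sublattices, send each $\lambda$ to a lattice point near $z_\lambda=L_\alpha\lambda+d_\alpha$, and treat the off-diagonal part as a small perturbation. Lemma~\ref{lem:D-bounded} then gives $\{\mathcal R(\lambda)\}\in\ell^\rho$.

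Your Khintchine argument reaches the same conclusion, and the obstacle you anticipate largely disappears. Since $(\sum_n|a_n|^2)^{q/2}\ge|a_m|^q$ pointwise, the off-diagonal terms never have to be controlled. You only need two things:
\begin{itemize}
\item the lower bound $|B(z,w)|\gtrsim\mathcal R(w)$ on a disc of fixed radius about $z_w=L_\alpha w+d_\alpha$, which is the left half of \eqref{eq:row-inequality};
\item disjointness of these discs, which \eqref{eq:Lalpha} gives because $\gamma\neq0$.
\end{itemize}
Because $S_\infty$ is bounded, your argument also covers $p=\infty$ directly. What the paper's route buys in exchange is a two-sided comparison of $A$ and $D$ (Proposition~\ref{prop:A=D}). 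It reuses the lower half of that comparison for the necessity part of Proposition~\ref{prop:fp-fq-compact}.

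\textbf{Sufficiency (iii)$\Rightarrow$(i).} The paper cites Proposition~\ref{prop:fp-fq-compact}, i.e.\ the factorization $T=D_sT_s$ with a compact dilation $D_s$. Your alternative kernel bound is valid and more elementary. Combine $\Theta_\alpha(w)\le-\varepsilon|w|^2+C$ with \eqref{eq:row-inequality} and with $|z|^2\le2|z-L_\alpha w-d_\alpha|^2+2|L_\alpha w+d_\alpha|^2$. This gives $|B(z,w)|\lesssim e^{-\varepsilon'(|z|^2+|w|^2)}$. Hence $|Tf(z)|e^{-|z|^2/2}\lesssim e^{-\varepsilon'|z|^2}\|f\|_\infty\lesssim e^{-\varepsilon'|z|^2}\|f\|_p$. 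Compactness into every $\F^q$ then follows from Montel's theorem and dominated convergence. Use this route rather than truncation when $p=\infty$. There finitely supported sequences are not dense, so passing from the diagonal model back to $T$ is not the ``obvious modification'' you suggest.

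\textbf{The factorization $T=W_1DW_2$.} Drop it from your ``Reduction'' paragraph; as stated it is false. Test the adjoint on reproducing kernels: an operator $f\mapsto e^{az^2+bz}f(cz+d)$ that is bounded and invertible on $\F^2$ must have $a=0$ and $|c|=1$. Such operators therefore cannot absorb the terms $\alpha z^2$ and $\beta\bar w^2$. Nothing later in your argument uses this factorization. What you actually rely on is the completed-square identity \eqref{eq:row}.
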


Define
\[
  P(z)=\exp\bigl(\alpha z^2+\eta z\bigr),\ z\in\C; \qquad
Q(w)=\exp\bigl(\bar\beta w^2+\bar\delta w\bigr),\ w\in\C.
\]

\begin{theorem}\label{thm:gamma=0}
Assume \(\gamma=0\). Let \(0<p, q\le\infty\). Then the following are
equivalent:
\begin{enumerate}
\item[\textup{(i)}]  \(T:\F^p\to\F^q\) is compact;
\item[\textup{(ii)}]  \(T:\F^p\to\F^q\) is bounded;
\item[\textup{(iii)}] \(P\in \F^q\), with \(|\beta|<1/2\) for \(1<p\le \infty\) and \(Q\in \F^\infty\) for \(0<p\le 1\).
\end{enumerate}
\end{theorem}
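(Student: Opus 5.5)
When \(\gamma=0\) the kernel factorizes, and the plan is to exploit that \(T\) is then a rank-one operator. Since \(\exp(\beta\bar w^2+\delta\bar w)=\overline{Q(w)}\), we get
\[
(Tf)(z)=P(z)\,\frac1\pi\int_{\C} f(w)\overline{Q(w)}\,\e^{-|w|^2}\,dA(w)=P(z)\,\ip{f}{Q},
\]
where \(\ip{\cdot}{\cdot}\) is the \(\F^2\) pairing, interpreted as the absolutely convergent integral. So \(T=P\otimes Q\) formally, and (i)\(\Leftrightarrow\)(ii) is immediate because a bounded operator of rank at most one is compact. Since \(P\) never vanishes and \(Q\not\equiv0\), the linear functional \(\Lambda f=\ip{f}{Q}\) is nonzero on polynomials. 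Hence \(T:\F^p\to\F^q\) is bounded if and only if two things hold: \(P\in\F^q\), and \(\Lambda\) is well defined (the integral converges for every \(f\in\F^p\)) and bounded on \(\F^p\). It remains to identify the second condition with the conditions on \(Q\) in (iii).

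\emph{Case \(0<p\le1\).} For sufficiency, assume \(Q\in\F^\infty\). Since \(\F^p\subset\F^1\) continuously for \(p\le1\), we get \(|\Lambda f|\le C\|f\|_1\|Q\|_\infty\le C'\|f\|_p\). For necessity, test \(\Lambda\) on the normalized reproducing kernels \(k_a\), which have \(\|k_a\|_p=1\). By the reproducing property, \(\Lambda k_a=\overline{Q(a)}\,\e^{-|a|^2/2}\), so \(\sup_a|Q(a)|\e^{-|a|^2/2}\le\|\Lambda\|\), that is, \(Q\in\F^\infty\).

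\emph{Case \(1<p<\infty\).} Hölder's inequality shows that \(\Lambda\) is bounded as soon as \(Q\in\F^{p'}\). A direct Gaussian computation shows that \(Q\in\F^{r}\) for some (equivalently all) \(r<\infty\) exactly when \(|\beta|<1/2\); this gives sufficiency. For necessity, use the duality \((\F^p)^*=\F^{p'}\) under the integral pairing from \cite{Zhu2012}. A bounded \(\Lambda\) is represented by some \(g\in\F^{p'}\), and testing on the kernels \(K_a\) forces \(g=Q\). Therefore \(Q\in\F^{p'}\), which gives \(|\beta|<1/2\).

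\emph{Case \(p=\infty\).} Sufficiency is as before: absolute convergence needs \(\int|Q|\e^{-|w|^2/2}\,dA<\infty\), that is, \(Q\in\F^1\), which again holds iff \(|\beta|<1/2\). Necessity is the step I expect to be the main obstacle, because duality is no longer available. The kernel test still gives \(Q\in\F^\infty\), hence \(|\beta|\le1/2\), so the borderline \(|\beta|=1/2\) must be excluded directly. In that case take \(f(w)=\exp(\bar\beta w^2)\). Then \(|f|\e^{-|w|^2/2}\le1\), so \(f\in\F^\infty\), while
\[
|f\overline Q|\,\e^{-|w|^2}=\exp\bigl(2\Real(\bar\beta w^2)-|w|^2+\Real(\bar\delta w)\bigr).
\]
The exponent vanishes quadratically only along the line where \(\Real(\bar\beta w^2)=|w|^2/2\), and it decays like a Gaussian transversally to it. Integrating out the transverse variable leaves \(\int_{\R}\e^{c\,t}\,dt\) with a real \(c\), which diverges. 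So the defining integral fails to converge for this \(f\), and \(T\) is not defined on \(\F^\infty\); I would double-check this transversal computation carefully. Combining the three cases with the condition \(P\in\F^q\) gives (iii).
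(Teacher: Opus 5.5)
Your proof is correct. Its skeleton matches the paper's: the factorization $Tf=\langle f,Q\rangle P$, the rank-one observation for (i)$\Leftrightarrow$(ii), and the reduction of boundedness to $P\in\F^q$ together with boundedness of $f\mapsto\langle f,Q\rangle$ on $\F^p$. You differ in how you identify the second condition.

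For $0<p\le 1$ the paper just cites the duality $(\F^p)^*=\F^\infty$ (Lemma~\ref{lem:duality}). You instead use $\F^p\subset\F^1$ for sufficiency and the normalized kernels $k_a$ for necessity. This is more elementary and never requires identifying a representing function with $Q$, a step the paper leaves implicit (you do address it for $1<p<\infty$ by testing on $K_a$).

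For $p=\infty$, the step you flag as the main obstacle takes one line in the paper. Since $\F^{p_1}\subset\F^\infty$ continuously, a bounded functional on $\F^\infty$ restricts to a bounded functional on $\F^{p_1}$ for any $1<p_1<\infty$. The reflexive case then gives $|\beta|<1/2$ with no borderline analysis. Your route is also valid: the kernel test gives $|\beta|\le 1/2$, and at $|\beta|=1/2$ you use $f(w)=\exp(\bar\beta w^2)\in\F^\infty$. Rotating so that $\bar\beta\e^{2i\theta}=1/2$ turns the integrand into $\exp(-2y^2+c_1x-c_2y)$, and $\int_{\R}\e^{c_1x}\,dx=\infty$ for every real $c_1$. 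This gives an explicit $f\in\F^\infty$ on which the defining integral diverges, at the cost of extra computation.

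One small justification needs fixing. The identity $\langle K_a,Q\rangle=\overline{Q(a)}$ behind your kernel tests cannot be quoted from Lemma~\ref{lem:reproducing}, because $Q$ is not yet known to lie in any $\F^s$. Instead, absolute convergence of the integral at $f=K_a$ forces $|\beta|<1$ by Lemma~\ref{lem:quadratic-integrability}. After that, Lemma~\ref{lem:Gaussian integral} gives the value directly.
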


The paper is organized as follows. Section~2 collects the Fock space
tools and Gaussian estimates. Section~3 establishes the matrix reduction
and the diagonal multiplier criterion. Section~4 proves
Theorems~\ref{thm:fp-fq}--\ref{thm:gamma=0}. Section~5 treats Schatten
class membership and the Hilbert--Schmidt norm on \(\F^2\).
Throughout this paper, compactness means that bounded sets
have relatively compact images in the topology induced by the norm or
quasi-norm.

\section{Preliminaries}

\subsection{Reproducing kernels, duality}

The reproducing kernel and normalized reproducing kernel in \(\F^2\) are
\[K_w(z)=\e^{ z\overline w},\qquad k_w(z)=\e^{ z\overline w- |w|^2/2}.\]
Moreover, we have
\[\|k_w\|_p=1\] for  \(0<p\le \infty\) \cite[Lemma 2.33, p.64]{Zhu2012}.


\begin{lemma}\cite[Corollary 2.22, p.52; Corollary 2.8, p.38]{Zhu2012}\label{lem:reproducing}
For \(0< p\le \infty\), \(f\in\F^p\),
\[f(z)=\frac{1}{\pi}\int_{\C}f(w)\overline{K_z(w)}\e^{-|w|^2}dA(w),\qquad z\in \C.\]
\[|f(z)|\le \|f\|_p\e^{|z|^2/2},\qquad z\in \C.\]
\end{lemma}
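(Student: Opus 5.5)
The statement to prove is Lemma~\ref{lem:reproducing}. It has two parts: the reproducing formula for every \(f\in\F^p\), \(0<p\le\infty\), and the pointwise estimate \(|f(z)|\le\|f\|_p\e^{|z|^2/2}\). I would prove the pointwise estimate first, because it gives the integrability needed for the reproducing formula.

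\textbf{Pointwise estimate.} For \(p=\infty\) there is nothing to prove; it is the definition of \(\|f\|_\infty\). For \(0<p<\infty\), fix \(z\) and use the Weyl translation. Set
\[
g(u)=f(z+u)\,\e^{-u\bar z-|z|^2/2}.
\]
Then \(|g(u)|\e^{-|u|^2/2}=|f(z+u)|\e^{-|z+u|^2/2}\), so \(\|g\|_p=\|f\|_p\) and \(|g(0)|=|f(z)|\e^{-|z|^2/2}\). It therefore suffices to show \(|g(0)|\le\|g\|_p\).

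Since \(|g|^p\) is subharmonic, the sub-mean-value property on circles \(|u|=r\) gives \(|g(0)|^p\le\frac{1}{2\pi}\int_0^{2\pi}|g(r\e^{i\theta})|^p\,d\theta\) for every \(r\). Multiplying by \(\frac{p}{2\pi}\e^{-pr^2/2}\cdot 2\pi r\) and integrating in \(r\) over \((0,\infty)\) uses
\[
\int_0^\infty p\,r\,\e^{-pr^2/2}\,dr=1,
\]
which matches the normalization \(\frac{p}{2\pi}\) in \(\|\cdot\|_p\). The result is \(|g(0)|^p\le\|g\|_p^p\), as required.

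\textbf{Reproducing formula.} For polynomials, the formula follows from orthogonality of the monomials, \(\frac1\pi\int_{\C} w^n\bar w^m\e^{-|w|^2}\,dA=n!\,\delta_{nm}\), together with \(\overline{K_z(w)}=\sum_m z^m\bar w^m/m!\).

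For a general \(f\in\F^p\), the pointwise estimate gives \(|f(w)|\le\|f\|_p\e^{|w|^2/2}\). Hence
\[
|f(w)\overline{K_z(w)}|\,\e^{-|w|^2}\le\|f\|_p\,\e^{|z||w|-|w|^2/2},
\]
which is integrable, so the integral converges absolutely.

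Write \(f=\sum_n a_nw^n\) and integrate in polar coordinates over discs \(|w|<R\). On such a disc the Taylor series converges uniformly, and angular orthogonality kills the cross terms. The integral over \(|w|<R\) thus equals
\[
\sum_n a_n z^n\,\frac{1}{n!}\int_0^{R} 2r^{2n+1}\e^{-r^2}\,dr.
\]
As \(R\to\infty\) the left side converges by absolute integrability. On the right, each truncated factor \(\frac{1}{n!}\int_0^R 2r^{2n+1}\e^{-r^2}\,dr\) increases to \(1\), and \(\sum|a_n||z|^n<\infty\), so dominated convergence for the series gives \(\sum_n a_nz^n=f(z)\).

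\textbf{Main obstacle.} The only real point is the case \(p\le1\) and \(p=\infty\), where \(\F^p\) is not contained in \(\F^2\), so the Hilbert-space argument is unavailable. The uniform pointwise estimate is exactly what makes the integral absolutely convergent in all cases, and the disc-truncation argument above then needs no Hilbert-space structure.
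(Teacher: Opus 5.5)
Your proof is correct. The paper gives no proof of this lemma; it only cites Zhu's book, so your argument is a self-contained replacement rather than a variant of something in the text.

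\textbf{Pointwise bound.} This is the standard argument. The translation $g(u)=f(z+u)\e^{-u\bar z-|z|^2/2}$ preserves $\|\cdot\|_p$. Integrating the sub-mean-value inequality for $|g|^p$ against $pr\e^{-pr^2/2}\,dr$, which has total mass $1$, reproduces exactly the normalization $\frac{p}{2\pi}$.

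\textbf{Reproducing formula.} You avoid any appeal to the Hilbert space $\F^2$, and each step is sound:
\begin{enumerate}
\item The pointwise bound makes $f(w)\overline{K_z(w)}\e^{-|w|^2}$ absolutely integrable.
\item Integrating the double series termwise on discs gives $\sum_n a_nz^nc_n(R)$, where $c_n(R)=\frac{1}{n!}\int_0^R2r^{2n+1}\e^{-r^2}\,dr$ satisfies $0\le c_n(R)\uparrow1$.
\item Dominated convergence, on both the integral and the series, finishes the argument.
\end{enumerate}
This handles all $0<p\le\infty$ at once. In fact it proves the formula for every entire $f$ with $|f(w)|\le C\e^{a|w|^2}$, $a<1$. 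Consequently your preliminary polynomial case is not needed.

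\textbf{One slip in your closing remark.} By Lemma~\ref{lem:inclusion}, $\F^p\subset\F^2$ whenever $p\le2$. So the spaces not contained in $\F^2$ are those with $2<p\le\infty$, not those with $p\le1$. This does not affect the proof, since your argument never uses $\F^2$.
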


\begin{lemma}\label{lem:inclusion}
If \(0<p_1<p_2\le\infty\), then \(\F^{p_1}\subset\F^{p_2}\), and the inclusion is continuous.
\end{lemma}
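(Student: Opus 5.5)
The inclusion will follow from the pointwise estimate in Lemma~\ref{lem:reproducing}. The plan is to show directly that for $f\in\F^{p_1}$ we have $\|f\|_{p_2}\le C\|f\|_{p_1}$ with $C$ depending only on $p_1,p_2$. Since the $\F^{p_2}$ quasi-norm is built from $|f(z)|\e^{-|z|^2/2}$, and Lemma~\ref{lem:reproducing} applied with $p=p_1$ gives
\[
|f(z)|\e^{-|z|^2/2}\le \|f\|_{p_1},\qquad z\in\C,
\]
the case $p_2=\infty$ is immediate: taking the supremum over $z$ yields $\|f\|_\infty\le\|f\|_{p_1}$. In particular every $f\in\F^{p_1}$ lies in $\F^\infty$.

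For $p_1<p_2<\infty$, I would split the integrand by writing
\[
|f(z)\e^{-|z|^2/2}|^{p_2}=|f(z)\e^{-|z|^2/2}|^{p_2-p_1}\,|f(z)\e^{-|z|^2/2}|^{p_1}.
\]
The first factor is bounded by $\|f\|_{p_1}^{p_2-p_1}$ using the pointwise estimate above, since $p_2-p_1>0$. Integrating the remaining factor against $dA$ then gives
\[
\frac{p_2}{2\pi}\int_\C|f\e^{-|z|^2/2}|^{p_2}\,dA\le \|f\|_{p_1}^{p_2-p_1}\cdot\frac{p_2}{p_1}\cdot\frac{p_1}{2\pi}\int_\C|f\e^{-|z|^2/2}|^{p_1}\,dA=\frac{p_2}{p_1}\|f\|_{p_1}^{p_2}.
\]
Hence $\|f\|_{p_2}\le (p_2/p_1)^{1/p_2}\|f\|_{p_1}$, which proves both the inclusion and the bounded embedding.

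Continuity follows because the inclusion is linear. For quasi-normed spaces, a bounded linear map is continuous, since $\|f-g\|_{p_2}\le C\|f-g\|_{p_1}$. There is no real obstacle here. The only point needing care is to track the normalizing constants $p/(2\pi)$ correctly, and to note that the pointwise bound in Lemma~\ref{lem:reproducing} holds for the full range $0<p_1$, including $p_1<1$, where $\|\cdot\|_{p_1}$ is only a quasi-norm.
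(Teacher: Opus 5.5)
Your proof is correct and matches the paper's. The case $p_2=\infty$ is exactly the pointwise bound of Lemma~\ref{lem:reproducing}, and for $p_2<\infty$ the paper simply cites \cite[Theorem 2.10]{Zhu2012}, whose standard proof is the splitting $|f\e^{-|z|^2/2}|^{p_2}=|f\e^{-|z|^2/2}|^{p_2-p_1}|f\e^{-|z|^2/2}|^{p_1}$ you use — so writing it out just makes the lemma self-contained with the explicit constant $(p_2/p_1)^{1/p_2}$.
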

\begin{proof}
For \(p_2<\infty\), this is precisely \cite[Theorem 2.10, p.39]{Zhu2012}. For \(p_2=\infty\), it follows from Lemma~\ref{lem:reproducing}.
\end{proof}

\begin{lemma}\cite[Corollary 2.25, p.57]{Zhu2012}\label{lem:duality}
For \( 0 < p < \infty \), the dual space of \( \F^p \) can be identified with \( \F^{p'} \) under the integral pairing
\[
\langle f, g \rangle = \frac{1}{\pi} \int_{\mathbb{C}} f(z) \overline{g(z)} \e^{- |z|^2}dA(z),\qquad f\in\F^p, \quad g\in\F^{p'},
\]
where \(p'=\infty\) if \(0<p\le1\), \(p'=p/(p-1)\) if \(1<p<\infty\).
\end{lemma}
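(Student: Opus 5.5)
The plan is to obtain the duality statement of Lemma~\ref{lem:duality} from the classical $L^p$ duality, with the Bergman-type projection
\[
(Pf)(z)=\frac1\pi\int_{\C} f(w)\e^{z\bar w}\e^{-|w|^2}\,dA(w)
\]
as the bridge. I treat the cases $1<p<\infty$ and $0<p\le 1$ separately.

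\textbf{Well-definedness.} Let $g\in\F^{p'}$. Each such $g$ defines a bounded functional on $\F^p$ via the pairing.
\begin{itemize}
\item If $1<p<\infty$, this follows from Hölder's inequality applied to $f\e^{-|z|^2/2}$ and $g\e^{-|z|^2/2}$. The normalizing constants $p/(2\pi)$ only contribute a harmless multiplicative factor.
\item If $0<p\le1$, I use Lemma~\ref{lem:inclusion} to get $\F^p\subset\F^1$ continuously. Then $|\langle f,g\rangle|\lesssim\|f\|_1\|g\|_\infty\lesssim\|f\|_p\|g\|_\infty$.
\end{itemize}

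\textbf{Injectivity.} Suppose $\langle f,g\rangle=0$ for all $f\in\F^p$. Since every $K_z$ lies in every $\F^p$, I may test on $f=K_z$. The reproducing formula of Lemma~\ref{lem:reproducing} then gives $\overline{g(z)}=\langle K_z,g\rangle=0$ for every $z$, so $g=0$.

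\textbf{Surjectivity for $1<p<\infty$.} I view $\F^p$ as a closed subspace of the weighted space $L^p(\e^{-p|z|^2/2}dA)$.
\begin{enumerate}
\item A functional $\Lambda\in(\F^p)^*$ extends to the whole weighted space by Hahn--Banach.
\item It is then represented by some $h$ with $h\e^{-|z|^2/2}\in L^{p'}$.
\item I set $g=Ph$. The key input is that $P$ is bounded on $L^{p'}(\e^{-p'|z|^2/2}dA)$. This follows from a Schur test with test function $\e^{\epsilon|z|^2}$ applied to the kernel $|\e^{z\bar w}|\e^{-|z|^2/2-|w|^2/2}=\e^{-|z-w|^2/2}$, which is a convolution kernel in $L^1$. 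So in fact Young's inequality suffices directly, and $g\in\F^{p'}$.
\item Fubini together with the reproducing formula gives $\langle f,h\rangle=\langle f,Ph\rangle$ for $f\in\F^p$.
\end{enumerate}

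\textbf{Surjectivity for $0<p\le1$ (the main obstacle).} Here $\F^p$ is not a Banach space, so Hahn--Banach cannot be used. Instead, given $\Lambda$, I define $g(z)=\overline{\Lambda(K_z)}$. The argument then has three steps.
\begin{enumerate}
\item \emph{$g$ is entire.} $z\mapsto K_z$ is anti-holomorphic as an $\F^p$-valued map, since difference quotients converge in $\F^p$ by a power series estimate.
\item \emph{Growth bound.} $|g(z)|\le\|\Lambda\|\,\|K_z\|_p=\|\Lambda\|\e^{|z|^2/2}$, so $g\in\F^\infty$.
\item \emph{Representation.} It remains to show $\Lambda(f)=\langle f,g\rangle$. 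By linearity this holds for finite combinations of kernels. For $0<p<\infty$ these are dense in $\F^p$: polynomials are dense, and kernels span a dense set, since any annihilator is zero by the injectivity argument. Continuity of both sides on $\F^p$ then finishes the proof.
\end{enumerate}

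The delicate point is the density of kernel combinations in the quasi-Banach range, together with justifying the Fubini interchange. If necessary, I would instead invoke the atomic decomposition $f=\sum c_n k_{a_n}$ with $\sum|c_n|^p<\infty$ on a lattice, which gives convergence in $\F^p$ directly.
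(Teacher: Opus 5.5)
The paper gives no proof of this lemma; it simply quotes Zhu's Corollary~2.25. Your proposal is therefore a reconstruction of the standard argument, and most of it is sound:
\begin{itemize}
\item the bound for $f\mapsto\langle f,g\rangle$;
\item injectivity via $\langle K_z,g\rangle=\overline{g(z)}$;
\item the Hahn--Banach argument for $1<p<\infty$. The conjugated projection has kernel of modulus $\frac1\pi\e^{-|z-w|^2/2}$, so Young's inequality gives both boundedness of $P$ on the weighted $L^{p'}$ space and the absolute convergence that justifies Fubini;
\item Steps 1--2 for $0<p\le1$, which correctly give an entire $g$ with $\|g\|_\infty\le\|\Lambda\|$ and $\Lambda(K_z)=\langle K_z,g\rangle$.
\end{itemize}

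\textbf{The gap is the density claim in Step 3.} The sentence ``any annihilator is zero by the injectivity argument'' is circular. Injectivity only shows that a functional \emph{already of the form} $f\mapsto\langle f,g\rangle$, with $g\in\F^{p'}$ and vanishing on all $K_z$, is zero. An arbitrary $\Lambda\in(\F^p)^*$ annihilating the kernels is known to have that form only once the surjectivity you are proving is established. For $0<p<1$ there is a second obstruction: $\F^p$ is not locally convex, so Hahn--Banach separation is unavailable. In general quasi-Banach spaces, ``trivial annihilator implies dense'' is false: in $L^p[0,1]$, $0<p<1$, the dual is $\{0\}$, so every proper closed subspace has trivial annihilator. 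Noting that polynomials are dense does not help unless you also show they lie in the closed span of the kernels.

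\textbf{Your fallback does close the gap.} Suppose $f=\sum_n c_nk_{a_n}$ with $\{c_n\}\in\ell^p$, as in Lemma~\ref{lem:dense}(i). For $0<p\le1$, the $p$-triangle inequality and $\|k_a\|_p=1$ give $\|f-\sum_{n\le N}c_nk_{a_n}\|_p^p\le\sum_{n>N}|c_n|^p\to0$. So kernel combinations are dense, and continuity of both sides finishes the proof.

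If you would rather not rely on the atomic decomposition (which Zhu proves after the duality theorem), your Step 1 already suffices. Iterating the difference-quotient estimate shows that $\partial_{\bar z}^nK_z=w^nK_z$ exists as an $\F^p$-limit of difference quotients. Hence $w^n=\partial_{\bar z}^nK_z|_{z=0}$ lies in the closed span of the kernels, and density of polynomials gives the claim. Equivalently, expanding $K_z=\sum_n\bar z^nw^n/n!$ in $\F^p$ gives $g(z)=\sum_n\overline{\Lambda(w^n)}\,z^n/n!$. Orthogonality of monomials then yields $\langle w^n,g\rangle=\Lambda(w^n)$ directly.

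Finally, $p=1$ needs none of this. Your Banach-space argument runs verbatim with $p'=\infty$, since Young's inequality also bounds the projection on the weighted $L^\infty$ space.
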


\subsection{Lattices and atomic decomposition}

Let \(\mathbb{Z}\) denote the set of integers. For \(r>0\), let
\[r\mathbb{Z}^2=\{rm+irn:\ m, n\in\mathbb{Z}\}\]
be the lattice generated by \(0, r\) and \(ir\). Then for any \(\xi\in\C\), there exists a \(\lambda_\xi\in r\mathbb{Z}^2\) such that
\begin{align}\label{eq:nearest-lattice-point}
|\xi-\lambda_\xi|=\min_{\lambda\in r\mathbb{Z}^2}|\xi-\lambda|\le \frac{r}{\sqrt{2}}.
\end{align}

\begin{lemma}\label{lem: lattices}
Let \(\Lambda=r\mathbb{Z}^2\). For any positive number \(\sigma\), there exist positive constants \(c_{r, \sigma}\), \(C_{r, \sigma}\) such that
\[
c_{r, \sigma}\le \sum_{\lambda\in\Lambda}e^{-\sigma|\lambda-\xi|^2}\le C_{r, \sigma}, \qquad \xi\in \C.
\]
\end{lemma}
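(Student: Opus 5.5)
The plan is to compare the lattice sum with the integral of the Gaussian over the plane, using the fact that the Gaussian varies only by a bounded factor on each lattice cell. First I would reduce to \(\xi\) in a fundamental cell. The map \(\lambda\mapsto\lambda-\mu\), for \(\mu\in\Lambda\), is a bijection of \(\Lambda\). Hence the sum \(S(\xi)=\sum_{\lambda\in\Lambda}e^{-\sigma|\lambda-\xi|^2}\) is \(\Lambda\)-periodic, \(S(\xi+\mu)=S(\xi)\), and it suffices to bound \(S\) for \(\xi\) in the square \(Q_0=\{x+iy:0\le x,y\le r\}\).

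For the lower bound, I would keep a single term. By \eqref{eq:nearest-lattice-point}, there is \(\lambda_\xi\in\Lambda\) with \(|\xi-\lambda_\xi|\le r/\sqrt2\). Therefore
\[
S(\xi)\ge e^{-\sigma|\lambda_\xi-\xi|^2}\ge e^{-\sigma r^2/2}=:c_{r,\sigma},
\]
and this bound holds for every \(\xi\in\C\) directly, without the periodicity reduction.

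For the upper bound, I would compare each term with an integral over a square. Let \(Q_\lambda=\lambda+Q_0\); these squares cover \(\C\) and overlap only on boundaries. For \(u\in Q_\lambda\) we have \(|u-\lambda|\le \sqrt2 r\), so
\[
|\lambda-\xi|\ge |u-\xi|-\sqrt2 r .
\]
Using \((a-b)^2\ge a^2/2-b^2\), this gives \(|\lambda-\xi|^2\ge \tfrac12|u-\xi|^2-2r^2\). Averaging over \(Q_\lambda\) then yields
\[
e^{-\sigma|\lambda-\xi|^2}\le \frac{e^{2\sigma r^2}}{r^2}\int_{Q_\lambda}e^{-\sigma|u-\xi|^2/2}\,dA(u).
\]
Summing over \(\lambda\) gives
\[
S(\xi)\le \frac{e^{2\sigma r^2}}{r^2}\int_{\C}e^{-\sigma|u-\xi|^2/2}\,dA(u)=\frac{2\pi e^{2\sigma r^2}}{\sigma r^2}=:C_{r,\sigma}.
\]
This constant is independent of \(\xi\), so the periodicity reduction is not actually needed.

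No step here is a real obstacle. The only point that needs care is the quadratic comparison inequality. An alternative is to count the lattice points in the annuli \(k\le|\lambda-\xi|<k+1\): there are \(O((k+1)/r^2+1)\) of them, which gives the bound \(\sum_k O(k+1)e^{-\sigma k^2}<\infty\).
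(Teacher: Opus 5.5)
Your proof is correct and follows the paper's route. The lower bound is the paper's nearest-lattice-point argument; your constant $e^{-\sigma r^2/2}$ is what $|\xi-\lambda_\xi|\le r/\sqrt2$ actually gives, while the paper's $e^{-\sigma r^2/\sqrt2}$ is weaker but still valid. For the upper bound the paper just cites \cite[Lemma 1.12, p.9]{Zhu2012}, and your comparison of each term with the average of $e^{-\sigma|u-\xi|^2/2}$ over the cell $Q_\lambda$ is a correct self-contained proof of that estimate; the inequality $|\lambda-\xi|^2\ge\frac12|u-\xi|^2-2r^2$ it relies on holds for every $u\in Q_\lambda$, whatever the sign of $|u-\xi|-\sqrt2 r$.
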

\begin{proof}
The upper bound follows from \cite[Lemma 1.12, p.9]{Zhu2012}, and the lower
bound follows by retaining a nearest lattice point. That is, for any \(\xi\in\C\), choose the nearest lattice point \(\lambda_\xi\) to \(\xi\). Then by \eqref{eq:nearest-lattice-point},
\[
\sum_{\lambda\in\Lambda}e^{-\sigma|\lambda-\xi|^2}\geq e^{-\sigma|\lambda_\xi-\xi|^2 }\ge e^{-\sigma r^2/\sqrt{2}}.\qedhere
\]
\end{proof}


The following result follows from Lemma 1.14 and its proof in \cite[p.10]{Zhu2012}.

\begin{lemma}\label{lem: lattices decomposition}
Let \(\Lambda=r\mathbb{Z}^2\). For any positive number \(K\), let \(k=\lfloor \frac{K}{r}\rfloor+1\) and \(N=(k+1)^2\). Then
\begin{enumerate}
\item[{\rm{(i)}}]  \(\Lambda\) can be decomposed into the disjoint union of \(N\) sublattices,
\[\Lambda=\Lambda_1\cup\cdots\cup\Lambda_N,\]
and the distance between any two points in each of the sublattices is at least \(K\).
\item[{\rm{(ii)}}]  For any \(0<s<\infty\), there exists a positive constant \(C_{K}\) such that
\[\sup_{\lambda\in\Lambda_j}\sum_{\lambda'\in\Lambda_j, \lambda'\neq \lambda}\e^{-s|\lambda-\lambda'|^2}\le C_{K}, \qquad 1\le j\le N, \]
and \(\lim_{K\to\infty}C_{K}=0.\)
\end{enumerate}

\end{lemma}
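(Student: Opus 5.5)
We construct the decomposition explicitly by residue classes. Write $\lambda=r(m+in)\in\Lambda=r\mathbb Z^2$. Since $k=\lfloor K/r\rfloor+1$, we have $kr>K$; using $k+1$ instead of $k$ keeps the count $N=(k+1)^2$ of the statement and only enlarges the separation. For each pair $(a,b)\in\{0,1,\dots,k\}^2$ we set
\[
\Lambda_{(a,b)}=\{r(m+in):\ m\equiv a,\ n\equiv b \pmod{k+1}\}.
\]
These $N=(k+1)^2$ sets are pairwise disjoint and their union is $\Lambda$, since every pair $(m,n)$ lies in exactly one residue class. We enumerate them as $\Lambda_1,\dots,\Lambda_N$.

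For part (i), take two distinct points $\lambda=r(m+in)$ and $\lambda'=r(m'+in')$ in the same $\Lambda_j$. Then $m-m'$ and $n-n'$ are multiples of $k+1$ and are not both zero. Hence
\[
|\lambda-\lambda'|\ge r(k+1)>rk>K.
\]

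For part (ii), fix $j$ and $\lambda\in\Lambda_j$. Each $\lambda'\in\Lambda_j$ can be written as $\lambda'=\lambda+r(k+1)(\mu+i\nu)$ with $(\mu,\nu)\in\mathbb Z^2$, and $\lambda'\ne\lambda$ exactly when $(\mu,\nu)\neq(0,0)$. Put $t=s r^2(k+1)^2$. Then
\[
\sum_{\lambda'\in\Lambda_j,\ \lambda'\neq\lambda}\e^{-s|\lambda-\lambda'|^2}
=\sum_{(\mu,\nu)\neq(0,0)}\e^{-t(\mu^2+\nu^2)}
=\Bigl(\sum_{\mu\in\mathbb Z}\e^{-t\mu^2}\Bigr)^2-1.
\]
This expression does not depend on $\lambda$ or on $j$, so we may define $C_K$ to be it. Finiteness is clear. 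For the limit, write $\sum_{\mu}\e^{-t\mu^2}=1+2\sum_{\mu\ge1}\e^{-t\mu^2}$. For $t\ge1$ we have $\e^{-t\mu^2}\le \e^{-t\mu}$, so
\[
0\le \sum_{\mu\ge1}\e^{-t\mu^2}\le \frac{\e^{-t}}{1-\e^{-t}}\to0 \qquad (t\to\infty).
\]
Therefore $C_K=(1+o(1))^2-1\to0$ as $t\to\infty$. Since $(k+1)r>K$, we have $t>sK^2\to\infty$ as $K\to\infty$, which gives $\lim_{K\to\infty}C_K=0$.

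The constant $C_K$ depends on $s$ and $r$ as well; the statement suppresses this, as $r$ and $s$ are held fixed.
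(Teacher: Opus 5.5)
Your proof is correct and takes essentially the same route as the paper. The paper gives no argument of its own; it cites Lemma~1.14 of Zhu's book and its proof, which rests on the same idea of splitting \(r\mathbb{Z}^2\) into translates of a coarser lattice, here the \((k+1)^2\) cosets of \(r(k+1)\mathbb{Z}^2\). Your exact evaluation of the off-diagonal sum as \(\bigl(\sum_{\mu\in\mathbb{Z}}\e^{-t\mu^2}\bigr)^2-1\), with \(t=sr^2(k+1)^2>sK^2\), makes explicit both the uniformity in \(\lambda\) and \(j\) and the limit \(C_K\to0\), which the paper leaves to the cited proof.
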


The following result provides an atomic decomposition of \(\F^p\)   and  shows that, for some \(r>0\), \(r\mathbb Z^2\) is a sampling sequence for \(\F^p\).

\begin{lemma}\label{lem:dense} Let \( 0 < p \le \infty \). There exists a positive constant \( r_0 \) such that for any \( 0 < r < r_0 \) and \(\Lambda=r\mathbb{Z}^2\), there  exist  positive constants \(c_{r, p}, C_{r, p} \) (independent of \( f \)) such that the two statements hold.
\begin{enumerate}
\item[{\rm{(i)}}] \cite[Theorem 2.34, p.65]{Zhu2012} The space \( \F^p \) consists exactly of the following functions:
\begin{equation}\label{eq:dense}
f(z) = \sum_{\lambda \in \Lambda} \zeta_\lambda k_\lambda(z),
\end{equation}
where  \( \{\zeta_\lambda : \lambda \in \Lambda\} \in \ell^p(\Lambda) \). Moreover,
\[c_{r, p} \|f\|_{p} \leq \inf \| \{\zeta_\lambda\} \|_{\ell^p(\Lambda)} \leq C_{r, p} \|f\|_{p}\]
for all \( f \in \F^p \), where the infimum is taken over all sequences \( \{\zeta_\lambda: \lambda \in \Lambda\} \) that give rise to the decomposition (not unique) in \eqref{eq:dense}.
\item[{\rm{(ii)}}]  \cite[Corollary 4.37, p.175]{Zhu2012} For any \(f\in \F^p\),
\[
c_{r, p}\|\{f(\lambda)e^{-|\lambda|^2/2}\}_{\lambda\in\Lambda}\|_{\ell^p(\Lambda)}\le \|f\|_p\le C_{r, p}\|\{f(\lambda)e^{-|\lambda|^2/2}\}_{\lambda\in\Lambda}\|_{\ell^p(\Lambda)}.
\]
\end{enumerate}
\end{lemma}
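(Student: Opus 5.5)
The statement is Lemma~\ref{lem:dense}, and both parts are cited from \cite{Zhu2012}. The plan is therefore to check that Zhu's hypotheses are met and to reconcile the normalizations, rather than to reprove the results. The one real issue is that the lemma asks for a single threshold \(r_0\) that works for both (i) and (ii) simultaneously.

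\textbf{Part (i).} I would invoke \cite[Theorem 2.34]{Zhu2012}. It states that for \(\Lambda=r\mathbb Z^2\) with \(r\) sufficiently small (depending on \(p\)), every \(f\in\F^p\) has the form \(\sum\zeta_\lambda k_\lambda\) with \(\{\zeta_\lambda\}\in\ell^p\) and comparable norms.

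The easy direction is the bound \(\|\sum\zeta_\lambda k_\lambda\|_p\lesssim\|\zeta\|_{\ell^p}\), which holds for every \(r\). I would prove it as follows.
\begin{itemize}
\item Write \(|k_\lambda(z)|e^{-|z|^2/2}=e^{-|z-\lambda|^2/2}\).
\item For \(p\le1\), use the \(p\)-triangle inequality and Lemma~\ref{lem: lattices}.
\item For \(p>1\), use H\"older's inequality with the weights \(e^{-|z-\lambda|^2/2}\), again together with Lemma~\ref{lem: lattices}.
\item For \(p=\infty\), the bound is a direct sup estimate.
\end{itemize}
The converse direction is the nontrivial one. One discretizes the reproducing formula of Lemma~\ref{lem:reproducing} over the lattice squares. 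The error operator then has norm \(O(r)\) on \(\F^p\), so for \(r<r_0(p)\) the approximating operator is invertible by a Neumann series, which yields the coefficient sequence.

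\textbf{Part (ii).} I would invoke \cite[Corollary 4.37]{Zhu2012}, which says that \(r\mathbb Z^2\) is a sampling sequence for \(\F^p\) whenever \(r\) is small.

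The upper bound \(\|\{f(\lambda)e^{-|\lambda|^2/2}\}\|_{\ell^p}\lesssim\|f\|_p\) holds for every \(r\). It follows from the subharmonicity estimate \(|f(\lambda)e^{-|\lambda|^2/2}|^p\lesssim\int_{D(\lambda,\rho)}|f e^{-|\cdot|^2/2}|^p\,dA\), combined with the fact that the discs \(D(\lambda,\rho)\) have bounded overlap.

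The lower bound comes from the Seip density criterion, or more elementarily from a small-oscillation argument. For \(z\) in the square around \(\lambda\), the oscillation \(\bigl||f(z)|e^{-|z|^2/2}-|f(\lambda)|e^{-|\lambda|^2/2}\bigr|\) is controlled by \(r\) times a local maximal function of \(f\), and that maximal function has \(\F^p\)-norm bounded by \(\|f\|_p\). Absorbing this error term for small \(r\) gives the lower bound.

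\textbf{Choice of \(r_0\).} Each part provides its own threshold, and both arguments remain valid for all smaller \(r\). Taking \(r_0\) to be the minimum of the two thresholds makes (i) and (ii) hold simultaneously, with constants depending only on \(r\) and \(p\).

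\textbf{Normalization check.} The norm used here carries the factor \(p/2\pi\) and the weight \(e^{-|z|^2/2}\), which is Zhu's \(\alpha=1\) case. Zhu's statements are phrased with the kernels \(k_\lambda\) normalized in the same way, so the citations apply directly.

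The main obstacle is the reverse inequality in (i), that is, proving the discretization error is small uniformly on \(\F^p\) for \(p<1\), where duality is unavailable. For this step I would simply rely on the cited theorem rather than reproduce its proof.
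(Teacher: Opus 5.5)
Your proposal is correct and follows the paper's route: the paper gives no independent proof and simply cites \cite[Theorem 2.34]{Zhu2012} for (i) and \cite[Corollary 4.37]{Zhu2012} for (ii), in the normalization where Zhu's weight parameter equals $1$. Your sketches of the easy directions are sound, and your choice of $r_0$ as the smaller of the two thresholds (both results persist for all smaller $r$) makes explicit a step the paper leaves implicit.
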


\begin{remark}\label{rem:SpRp}
Let \(0<p\le\infty\).
Based on Lemma \ref{lem:dense}, for some \(r>0\) and \(\Lambda=r\mathbb{Z}^2\), we define the  operator \(S_p\) from \(\ell^p(\Lambda)\) to \(\F^p\) as
\[S_p\zeta=\sum_{\lambda\in\Lambda}\zeta_\lambda k_\lambda,\qquad \zeta=\{\zeta_\lambda\}_{\lambda\in \Lambda}\in \ell^p(\Lambda).\]
Then \(S_p\) is bounded and for any \(f\in\F^p\), there exists \(\zeta\in \ell^p(\Lambda)\) such that \(f=S_p\zeta\) and
\[\|\zeta\|_{\ell^p(\Lambda)}\le C_{r, p}\|f\|_p.\]
We also define the operator \(R_p\) from \(\F^p\) to \(\ell^p(\Lambda)\) as
\[R_p f=\{f(\lambda)e^{-|\lambda|^2/2}\}_{\lambda\in\Lambda}.\]
Then \(R_p\) is both bounded and bounded below.
\end{remark}

\subsection{Related integrals and functions}

The following integral formulas are elementary, see \cite[Lemma 2 (2)]{DZ1}.

\begin{lemma}\label{lem:Gaussian integral}
If \(|b|<1\), then
\[\frac{1}{\pi}\int_{\C}e^{\overline{b}\overline{z}^{2}
+cz+\bar{d}\overline{z}-|z|^{2}}d A(z)=
\e^{\bar{b} c^{2}+\bar{d}c}.\]
If
\(|b|<1/2\), then
\[\left\|\e^{bz^2+cz}\right\|_p^p=\frac1{\sqrt{1-4|b|^2}}\exp\left(\frac{p}{2}
   \frac{|c|^2+2\operatorname{Re}(\overline b c^2)}{1-4|b|^2}\right),\qquad 0<p<\infty.\]
\end{lemma}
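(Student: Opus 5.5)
For the first formula I will expand the antiholomorphic factor and the holomorphic factor in power series and integrate term by term against the Gaussian, using orthogonality of monomials. Put \(h(w)=\e^{\bar b w^2+\bar d w}=\sum_{n\ge0}a_n w^n\), so that \(\e^{\bar b\bar z^2+\bar d\bar z}=\sum_m a_m\bar z^m\), and write \(\e^{cz}=\sum_n c^n z^n/n!\). Since
\[
\frac1\pi\int_\C z^n\bar z^m\e^{-|z|^2}\,dA(z)=n!\,\delta_{nm},
\]
a formal term-by-term integration gives
\[
\sum_n a_n c^n=h(c)=\e^{\bar b c^2+\bar d c},
\]
which is the claimed value.

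The step that needs care is justifying the interchange of sum and integral, and this is where \(|b|<1\) is used. I will pass to polar coordinates \(z=r\e^{i\theta}\). For fixed \(r\), both series converge uniformly on the circle \(|z|=r\), so the angular integral can be done term by term, and only the diagonal terms survive:
\[
\int_0^{2\pi}\e^{\bar b\bar z^2+cz+\bar d\bar z}\,d\theta=2\pi\sum_n a_n\frac{c^n}{n!}\,r^{2n}.
\]
It remains to integrate against \(\e^{-r^2}r\,dr\) term by term. For this I need \(\sum_n |a_n||c|^n r^{2n}/n!\) to be integrable against \(\e^{-r^2}r\,dr\), which amounts to \(\sum_n |a_n||c|^n<\infty\). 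That holds because \(h\) is entire. Alternatively, domination follows from \(|\e^{\bar b\bar z^2+cz+\bar d\bar z-|z|^2}|\le \e^{-(1-|b|)|z|^2+(|c|+|d|)|z|}\), which is integrable because \(|b|<1\). Either argument justifies Fubini, and the first formula follows.

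For the second formula I will compute directly as a real Gaussian integral. By definition,
\[
\|\e^{bz^2+cz}\|_p^p=\frac p{2\pi}\int_\C\exp\Bigl(-\tfrac p2\bigl(|z|^2-2\Real(bz^2)\bigr)+p\Real(cz)\Bigr)\,dA(z).
\]
I first rotate \(z=\e^{i\theta}u\) with \(\e^{2i\theta}b=|b|\). This replaces \(b\) by \(|b|\) and \(c\) by \(c'=c\e^{i\theta}\). With \(u=x+iy\), the quadratic form becomes \((1-2|b|)x^2+(1+2|b|)y^2\), which is positive definite exactly when \(|b|<1/2\), with determinant \(1-4|b|^2\). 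The linear part is \(p(\Real c'\,x-\operatorname{Im}c'\,y)\). Completing the square in \(x\) and \(y\) separately, the Gaussian prefactor is \((2\pi/p)(1-4|b|^2)^{-1/2}\); multiplied by \(p/(2\pi)\) this gives \(1/\sqrt{1-4|b|^2}\). The exponent is
\[
\frac p2\Bigl(\frac{(\Real c')^2}{1-2|b|}+\frac{(\operatorname{Im}c')^2}{1+2|b|}\Bigr)=\frac p2\,\frac{|c'|^2+2|b|\Real(c'^2)}{1-4|b|^2}.
\]
Finally I undo the rotation using \(|b|c'^2=|b|\e^{2i\theta}c^2=\bar b c^2\), since \(\e^{2i\theta}=\bar b/|b|\). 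This yields the stated expression \(\frac p2\,\frac{|c|^2+2\Real(\bar bc^2)}{1-4|b|^2}\). The case \(b=0\) is covered trivially.

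The main obstacle is bookkeeping rather than anything conceptual. In the first part it is the convergence justification; in the second it is tracking the rotation correctly so that \(\Real(\bar bc^2)\), and not \(\Real(bc^2)\), appears. I would double-check the latter by testing real \(b\) and real \(c\).
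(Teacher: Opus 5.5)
The paper gives no proof of this lemma. It calls the formulas elementary and refers to \cite{DZ1}, so your argument is a self-contained verification, and it is correct up to one point below. The second identity is handled as it should be; it is the same rotate-and-separate device the paper uses in the proof of Lemma~\ref{lem:quadratic-integrability}. Your bookkeeping $\e^{2i\theta}=\bar b/|b|$, which produces $\Real(\bar b c^2)$ rather than $\Real(bc^2)$, is right.

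For the first identity, note that when $|b|<1/2$ it is just the reproducing formula of Lemma~\ref{lem:reproducing}, applied to $f(w)=\e^{bw^2+dw}\in\F^\infty$ at the point $\bar c$ and then conjugated. Your power-series argument is what reaches the full range $1/2\le|b|<1$. The paper itself only uses $|b|<1/2$, e.g.\ to get $TK_w=H(\cdot,w)$.

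The logical point to fix is that your two justifications are not alternatives; you need both.
\begin{itemize}
\item The estimate $\sum_n|a_n||c|^n<\infty$ never uses $|b|<1$. It shows that the \emph{iterated} polar integral equals $h(c)$ for every $b$, even though for $|b|>1$ the area integral is not absolutely convergent. So it only licenses the termwise radial integration.
\item Passing from $dA$ to the iterated polar integral is a separate step. It needs the $L^1$ bound $\e^{-(1-|b|)|z|^2+(|c|+|d|)|z|}$, and this is where $|b|<1$ enters.
\end{itemize}

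A cleaner route is a single Tonelli step. The coefficients of $\e^{\bar b w^2+\bar d w}$ are dominated in modulus by those of $\e^{|b|w^2+|d|w}$. Hence $\sum_{m,n}|a_m||c|^n|z|^{m+n}/n!\le \e^{|b||z|^2+(|c|+|d|)|z|}$, which is integrable against $\e^{-|z|^2}\,dA$ because $|b|<1$. The double series can then be integrated term by term over $\C$ directly, and orthogonality of monomials finishes the proof.
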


\begin{lemma}\label{lem:quadratic-integrability}
Let
\[F(z)=\exp\left(\frac a2|z|^2+\Real(bz^2+cz)\right),
 \qquad a\in\mathbb R,\quad b,c\in\C,\]
and let \(\Lambda\) be any lattice in \(\C\). For one, and hence for
every, \(0<p<\infty\), the following are equivalent:
\begin{enumerate}
\item[{\rm{(i)}}] \(F\in L^p(\C,d A)\);
\item[{\rm{(ii)}}] \(\{F(\lambda)\}_{\lambda\in\Lambda}\in\ell^p(\Lambda)\);
\item[{\rm{(iii)}}] \(F(z)\to0\) as \(|z|\to\infty\);
\item[{\rm{(iv)}}] \(F(\lambda)\to0\) as \(|\lambda|\to\infty\), \(\lambda\in\Lambda\);
\item[{\rm{(v)}}] \(a/2+|b|<0\).
\end{enumerate}
\end{lemma}

\begin{proof}
Let \(b=|b|e^{-i2\theta}\), \(z=x+iy\). Then
\[F(e^{i\theta}z)=\exp\left((\frac{a}{2}+|b|)x^2+(\frac{a}{2}-|b|)y^2+c_1x-c_2y\right),\]
where \(c_1=\Real(ce^{i\theta})\), \(c_2=\operatorname{Im}(ce^{i\theta}).\) Hence
\begin{align*}
&\ \int_{\C}|F(z)|^pd A(z)=\int_{\C}|F(e^{i\theta}z)|^pd A(z)\\
=&\ \int_{\R^2}\exp\left(p(\frac{a}{2}+|b|)x^2+p(\frac{a}{2}-|b|)y^2+pc_1x-pc_2y\right)d xd y\\
=&\ \int_{\R}\exp\left(p(\frac{a}{2}+|b|)x^2+pc_1x\right)d x\int_{\R}\exp\left(p(\frac{a}{2}-|b|)y^2-pc_2y\right)d y.
\end{align*}
Therefore, \(\int_{\C}|F(z)|^pd A(z)<\infty\) if and only if
\[\int_{\R}\exp\left(p(\frac{a}{2}+|b|)x^2+pc_1x\right)d x<\infty,\qquad \int_{\R}\exp\left(p(\frac{a}{2}-|b|)y^2-pc_2y\right)d y<\infty.\]
Obviously, with the fact \(a/2-|b|\le a/2+|b|\), the formulas hold above if and only if
\[\frac{a}{2}+|b|<0.\]
This gives the equivalence of the statements (i) and (v).

If \(a/2+|b|<0\), then
\[\lim_{|z|\to\infty}|F(z)|\le\lim_{|z|\to\infty}\exp\left((\frac{a}{2}+|b|)|z|^2+|cz|\right)=0,\]
Hence \(F(z)\to 0\) as \(|z|\to\infty\).
Conversely, if  \(F(z)\to 0\) as \(|z|\to\infty\), then
\[\lim_{|x|\to\infty}\exp\left((\frac{a}{2}+|b|)x^2+c_1x\right)=\lim_{|x|\to\infty, y=0}|F(e^{i\theta}z)|=\lim_{|z|\to\infty}|F(z)|=0.\]
It follows that \(a/2+|b|<0.\) The equivalence of the statements (iii) and (v) follows.

Next, we give the equivalence of (ii), (iv) and (v).

If \(a/2+|b|<0\), then
\begin{align*}
|F(\lambda)|\le&\exp\left((\frac{a}{2}+|b|)|\lambda|^2+|c\lambda|\right)\\
\le& \exp\Bigl((\frac{a}{2}+|b|)|\lambda|^2+\frac12\bigl(\bigl|\frac{a}{2}+|b|\bigr||\lambda|^2
+\frac{|c|^2}{\bigl|a/2+|b|\bigr|}\bigr)\Bigr)\\
= &\exp\Bigl(\frac12(\frac{a}{2}+|b|)|\lambda|^2
+\frac12\frac{|c|^2}{\bigl|a/2+|b|\bigr|}\bigr)\Bigr).
\end{align*}
It follows from Lemma \ref{lem: lattices} that \(\sum_{\lambda\in\Lambda}|F(\lambda)|^p<\infty.\) Hence (v) implies (ii). Obviously, (ii) implies (iv).

Assume that (iv) holds.
Let \(\Omega=e^{-i\theta}\Lambda\), then  \(\Omega\) is also a lattice. We have
\[|F(\lambda)|=|F(e^{i\theta}\omega)|=\exp\left((\frac{a}{2}+|b|)x^2+c_1x+(\frac{a}{2}-|b|)y^2-c_2y\right), \quad \omega=x+iy\in\Omega.\]
We use the following fact about lattices over \(\C\). There exist \(L_\Omega>0\) and sequences \(\{\omega_{n}\}\) such that (1)
\(\{\omega_{n}=x_{n}+iy_{n}\in\Omega:\ \lim_{n\to\infty}x_{n}=\infty,\ |y_n|\le L_\Omega\}\);
(2) \(\{\omega_{n}=x_{n}+iy_{n}\in\Omega:\ \lim_{n\to\infty}x_{n}=-\infty,\ |y_n|\le L_\Omega\}\).
If \(a+2|b|>0\), taking \(\{\omega_n\}\in \Omega\) as (1), then
\[|F(\lambda)|\ge \inf_{|y|\le L_\Omega}\exp\left((\frac{a}{2}-|b|)y^2-c_2y\right)\exp\left((\frac{a}{2}+|b|)x_n^2+c_1x_n\right)\to \infty,\quad n\to\infty.\]
If \(a+2|b|=0\), \(c_1\neq 0\),  taking \(\{\omega_n\}\in \Omega\) as (1) or (2), depending on \(c_1>0\) or \(c_1<0\), then
\[|F(\lambda)|\ge \inf_{|y|\le L_\Omega}\exp\left((\frac{a}{2}-|b|)y^2-c_2y\right)\exp\left(c_1x_n\right)\to \infty,\quad n\to\infty.\]
If \(a+2|b|=0\), \(c_1= 0\),  taking \(\{\omega_n\}\in \Omega\) as (1) or (2), then
\[|F(\lambda)|\ge \inf_{|y|\le L_\Omega}\exp\left((\frac{a}{2}-|b|)y^2-c_2y\right)>0.\]
Therefore,
(iv) implies that (v).
\end{proof}

The following corollaries are well-known. For completeness, we outline the proofs.

\begin{corollary}\label{cor:nonzero in fp}
Let \(f(z)=\e^{bz^2+cz}\) with \(b, c\in \C\).
\begin{enumerate}
\item[{{\rm (i)}}] \(f\in \F^p\) for \(0<p< \infty\) if and only if \(|b|<\frac{1}{2}\).
\item[{{\rm (ii)}}] If \(|b|<1/2\), then \(f\in\F^\infty\); if \(f\in\F^\infty\), then \(|b|\le \frac{1}{2}.\)
\end{enumerate}
\end{corollary}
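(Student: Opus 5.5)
The plan is to reduce both parts to Lemma~\ref{lem:quadratic-integrability}, applied to the modulus $|f(z)|\e^{-|z|^2/2}$. For $f(z)=\e^{bz^2+cz}$ we have
\[
|f(z)|\e^{-|z|^2/2}=\exp\Bigl(-\tfrac12|z|^2+\Real(bz^2+cz)\Bigr),
\]
which is exactly the function $F$ of Lemma~\ref{lem:quadratic-integrability} with $a=-1$, $b$ and $c$ unchanged.

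\emph{Part (i), $0<p<\infty$.} By definition $f\in\F^p$ if and only if $F\in L^p(\C,dA)$, since $f$ is entire. By the equivalence (i)$\Leftrightarrow$(v) of Lemma~\ref{lem:quadratic-integrability}, this holds if and only if $-\tfrac12+|b|<0$, that is, $|b|<1/2$. As a consistency check, the sufficiency direction also follows from the explicit norm formula in Lemma~\ref{lem:Gaussian integral}, which is finite when $|b|<1/2$.

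\emph{Part (ii), sufficiency.} Assume $|b|<1/2$. Then by (v)$\Rightarrow$(iii) of Lemma~\ref{lem:quadratic-integrability}, $F(z)\to0$ as $|z|\to\infty$. Since $F$ is continuous, it is bounded on $\C$, so $f\in\F^\infty$. Alternatively, part (i) gives $f\in\F^p$ for some finite $p$, and Lemma~\ref{lem:inclusion} then yields $f\in\F^\infty$.

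\emph{Part (ii), necessity.} Suppose $|b|>1/2$. Write $b=|b|\e^{-2i\theta}$ and evaluate along the ray $z=x\e^{i\theta}$, $x>0$, as in the proof of Lemma~\ref{lem:quadratic-integrability}. This gives
\[
F(x\e^{i\theta})=\exp\bigl((|b|-\tfrac12)x^2+c_1x\bigr),\qquad c_1=\Real(c\e^{i\theta}),
\]
which tends to $\infty$ as $x\to\infty$ because $|b|-\tfrac12>0$. Hence $\sup_{z\in\C}F(z)=\infty$ and $f\notin\F^\infty$. Therefore $f\in\F^\infty$ forces $|b|\le 1/2$.

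The only delicate point is that the borderline case $|b|=1/2$ cannot be excluded in (ii). For example, with $b=1/2$ and $c=0$ we get $F(x)=1$ on the real axis, and indeed $\e^{z^2/2}\in\F^\infty$. This is why the statement gives only the non-strict inequality in the necessary direction, and the argument should not try to prove more.
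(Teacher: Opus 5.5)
Your proof is correct and follows the paper's argument. Part (i) is the equivalence (i)$\Leftrightarrow$(v) of Lemma~\ref{lem:quadratic-integrability} with $a=-1$, sufficiency in (ii) comes from (i) together with Lemma~\ref{lem:inclusion}, and your ray computation along $z=x\e^{i\theta}$ (with the example $\e^{z^2/2}\in\F^\infty$ explaining why only $|b|\le 1/2$ can be asserted) is exactly the ``direct computation by the definition of $\F^\infty$'' that the paper leaves to the reader.
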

\begin{proof}
For \(0<p<\infty\), \(f\in \F^p\)  if and only if
\[
F(z)=\exp\bigl(-\frac{1}{2}|z|^2+\Real(bz^2+cz)\bigr)\in L^p(\C, dA).\] Hence assertion (i) follows from Lemma \ref{lem:quadratic-integrability}.

Assertion (ii) follows from a direct computation by the definition of \(\F^\infty\). Also, the first part in (ii) follows from (i) and Lemma \ref{lem:inclusion}.
\end{proof}

\begin{corollary}\label{cor:functional}
Let \(h(z)=\e^{bz^2+cz}\) with \(|b|<1/2\). Then \(h\) induces a continuous functional on \(\F^p\) for any \(0<p\le \infty\) under the integral pairing
\[
\langle f, h \rangle = \frac{1}{\pi} \int_{\mathbb{C}} f(z) \overline{h(z)} \e^{- |z|^2}dA(z),\qquad f\in\F^p.
\]
\end{corollary}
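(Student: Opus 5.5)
The plan is to show that $h$ lies in $\F^{p'}$ for a suitable conjugate index $p'$, or at least that $h$ is dominated by a Gaussian integrable against every $f\in\F^p$. Then the pairing converges absolutely and is bounded by a constant times $\|f\|_p$.

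First, consider $1\le p\le\infty$. Here I would use that $|f(z)|e^{-|z|^2/2}$ belongs to $L^p(dA)$, up to the normalizing constant, or is bounded when $p=\infty$. Write
\[
|\langle f,h\rangle|\le \frac1\pi\int_\C |f(z)|e^{-|z|^2/2}\,|h(z)|e^{-|z|^2/2}\,dA(z),
\]
and apply Hölder's inequality with exponents $p$ and $p'$. It then suffices that $|h(z)|e^{-|z|^2/2}\in L^{s}(dA)$ for every $0<s<\infty$. This is exactly Corollary~\ref{cor:nonzero in fp}(i), equivalently Lemma~\ref{lem:quadratic-integrability} applied to $F(z)=\exp(-\tfrac12|z|^2+\Real(bz^2+cz))$. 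Condition (v) there reads $-\tfrac12+|b|<0$, which holds since $|b|<1/2$. This gives $|\langle f,h\rangle|\le C\|f\|_p\|h\|_{p'}$, with the appropriate normalizations, including the case $p=\infty$, where $p'=1$.

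Next, consider $0<p<1$. By Lemma~\ref{lem:inclusion}, $\F^p\subset\F^1$ continuously, so $\|f\|_1\le C\|f\|_p$. The case $p=1$ above then gives $|\langle f,h\rangle|\le C\|h\|_\infty\|f\|_1\le C'\|f\|_p$. Alternatively, one can cite Lemma~\ref{lem:duality} with $p'=\infty$, using $h\in\F^\infty$ from Corollary~\ref{cor:nonzero in fp}(ii). Linearity of $f\mapsto\langle f,h\rangle$ is clear once absolute convergence is known. So the functional is continuous with respect to the quasi-norm.

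The only mild obstacle is the endpoint $p=\infty$. Lemma~\ref{lem:duality} does not cover it, and $\F^\infty$ is not the dual of $\F^1$ under this pairing in the required direction. The direct Hölder estimate handles it anyway, since $\|h\|_1<\infty$ by Lemma~\ref{lem:Gaussian integral} or by Lemma~\ref{lem:quadratic-integrability} with $s=1$. The key point is the strict inequality $|b|<1/2$, which gives integrability for every exponent at once.
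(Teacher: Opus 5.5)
Your argument is correct and is essentially the paper's: $h\in\F^t$ for every $0<t\le\infty$ by Corollary~\ref{cor:nonzero in fp}, then a duality/H\"older bound for $0<p<\infty$, and the direct estimate $|\langle f,h\rangle|\le 2\|f\|_\infty\|h\|_1$ for $p=\infty$ (the paper simply cites Lemma~\ref{lem:duality} for $0<p<\infty$, whereas you run H\"older directly and reduce $p<1$ to $p=1$ via Lemma~\ref{lem:inclusion}). One small slip: for $p=1$ the conjugate exponent is $\infty$, which your ``every $0<s<\infty$'' does not cover, but you supply it later through $h\in\F^\infty$ from Corollary~\ref{cor:nonzero in fp}(ii).
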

\begin{proof}
Since  \(|b|<1/2\), by Corollary \ref{cor:nonzero in fp}, \(h\in\F^{t}\) for any \(0<t\le \infty\).

When \(0<p<\infty\), the assertion  follows from Lemma \ref{lem:duality}.
When \(p=\infty\), for any \(f\in\F^\infty\),
\(
|\langle f, h \rangle |\leq 2\|f\|_\infty\|h\|_1.
\)
Hence the conclusion holds for \(p=\infty\).
\end{proof}

\subsection{Action on reproducing kernels}\label{subsec:action}

In this subsection, we assume throughout that
\(|\alpha|<\frac{1}{2}, |\beta|<\frac{1}{2}\).

Denote
\[H(z, w)=\exp\bigl(\alpha z^2+\beta\bar w^2+\gamma z\bar w+\eta z+\delta\bar w\bigr),\qquad z , w\in \C.\]
Then
\[
Tf(z)=\frac1\pi\int_{\C}H(z, w)f(w)\e^{-|w|^2}d A(w),\qquad z\in \C.
 \]
 Define
 \[
 T^\#f(w)=\frac1\pi\int_{\C}\overline{H(z, w)}f(z)\e^{-|z|^2}d A(z),\qquad w\in \C.
 \]
 Then we see that the parameters (\(\bar{\beta}, \bar{\alpha}, \bar{\gamma}, \bar{\delta}, \bar{eta}\)) in the definition of \(T^\#\) correspond to the parameters (\(\alpha, \beta, \gamma, \eta, \delta\)) in the definition of \(T\). In what follows, we shall always use this correspondence to obtain the corresponding results for \(T^\#\) from those for \(T\).

Since \(|\beta|<1/2\), for any \(z\in\C\), by Corollary \ref{cor:functional},
\(H(z, \cdot)\) induces a continuous functional on \(\F^p\) for all \(0<p\le\infty\).
In particular,
by Lemma \ref{lem:reproducing},
\begin{align*}
TK_w(z)=H(z, w).
\end{align*}
Since \(|\alpha|<1/2\), for any \(w\in \C\), by Corollary \ref{cor:nonzero in fp},
\(TK_w\in\F^p\) for all \(0<p\le\infty\).

Combining with Lemma \ref{lem:Gaussian integral}, we obtain
the following lemma.

\begin{lemma}\label{lem:kernels}
For all \(z, w\in\C\),
\[(Tk_w)(z)=H(z, w)\e^{-|w|^2/2}.\]
\[\|Tk_w\|_p=C_{\alpha, p} \mathcal R(w),\qquad 0<p<\infty.\]
Here
\[C_{\alpha, p}=\left(1-4|\alpha|^2\right)^{-1/2p}\exp\left(\frac{1}{2}\frac{|\eta|^2+2\Real(\bar\alpha \eta^2)}{1-4|\alpha|^2}\right).\]
\end{lemma}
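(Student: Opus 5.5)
The first formula follows from linearity and the preceding computation \(TK_w(z)=H(z,w)\). Since \(k_w=\e^{-|w|^2/2}K_w\) and, as noted above, \(H(z,\cdot)\) induces a continuous functional on every \(\F^p\) (because \(|\beta|<1/2\)), Lemma~\ref{lem:reproducing} applied to the entire function \(w'\mapsto \overline{H(z,w')}\)... or more directly Lemma~\ref{lem:Gaussian integral} with \(b=\beta\)-type coefficient gives \(TK_w(z)=H(z,w)\). Multiplying by the constant \(\e^{-|w|^2/2}\) yields \((Tk_w)(z)=H(z,w)\e^{-|w|^2/2}\). To see the first Gaussian integral applies, write \(H(z,w')K_w(w')\e^{-|w'|^2}=\exp(\alpha z^2+\eta z)\exp(\beta\bar w'^2+(\gamma z+\delta)\bar w'+\bar w w'-|w'|^2)\). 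Then use the formula with \(\bar b=\beta\), \(c=\bar w\), \(\bar d=\gamma z+\delta\), which gives \(\exp(\beta\bar w^2+(\gamma z+\delta)\bar w)\). This is exactly \(H(z,w)\) after restoring the factor \(\e^{\alpha z^2+\eta z}\).

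For the norm, I will regard \(z\mapsto H(z,w)\) as \(\e^{bz^2+cz}\) times a constant in \(z\), with \(b=\alpha\) and \(c=\gamma\bar w+\eta\). The constant is \(\exp(\beta\bar w^2+\delta\bar w)\), whose modulus is \(\exp\Real(\bar\beta w^2+\bar\delta w)\). Since \(|\alpha|<1/2\), the second formula of Lemma~\ref{lem:Gaussian integral} gives
\[
\|Tk_w\|_p=\e^{-|w|^2/2}\,\e^{\Real(\bar\beta w^2+\bar\delta w)}(1-4|\alpha|^2)^{-1/(2p)}\exp\Bigl(\frac12\frac{|c|^2+2\Real(\bar\alpha c^2)}{1-4|\alpha|^2}\Bigr).
\]

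What remains is algebraic: expand \(|c|^2+2\Real(\bar\alpha c^2)\) with \(c=\gamma\bar w+\eta\). We get
\[
|c|^2=|\gamma|^2|w|^2+2\Real(\bar\gamma\eta w)+|\eta|^2,
\]
\[
\bar\alpha c^2=\bar\alpha\gamma^2\bar w^2+2\bar\alpha\gamma\eta\bar w+\bar\alpha\eta^2.
\]
Taking real parts allows conjugation, so \(\Real(\bar\alpha\gamma^2\bar w^2)=\Real(\alpha\bar\gamma^2w^2)\) and \(\Real(\bar\alpha\gamma\eta\bar w)=\Real(\alpha\bar\gamma\bar\eta w)\). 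Collecting terms, the \(|w|^2\) coefficient combines with \(-\tfrac12|w|^2\) to give \(\tfrac12\bigl(\tfrac{|\gamma|^2}{1-4|\alpha|^2}-1\bigr)|w|^2\). The \(w^2\) terms give \(\Real(N_\alpha w^2)\) and the linear terms give \(\Real(M_\alpha w)\), matching \eqref{eq:NalphaMalpha}. The \(w\)-independent part is \(\tfrac12\frac{|\eta|^2+2\Real(\bar\alpha\eta^2)}{1-4|\alpha|^2}\), which together with the factor \((1-4|\alpha|^2)^{-1/(2p)}\) is \(C_{\alpha,p}\).

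The only delicate point is the bookkeeping of conjugates in the linear term. The factor 2 in \(2\Real(\bar\alpha c^2)\) times the cross term \(2\bar\alpha\gamma\eta\bar w\), after halving, gives \(2\alpha\bar\gamma\bar\eta\), consistent with \(M_\alpha\). I will check that carefully; everything else is routine.
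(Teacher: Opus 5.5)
Your proposal is correct and follows the paper's route. The paper gets \(TK_w(z)=H(z,w)\) from the reproducing formula applied to \(w'\mapsto\overline{H(z,w')}\). You instead compute it directly from the first formula of Lemma~\ref{lem:Gaussian integral} with \(\bar b=\beta\), \(c=\bar w\), \(\bar d=\gamma z+\delta\); this is an equivalent and equally valid step. You then apply the second formula with \(b=\alpha\), \(c=\gamma\bar w+\eta\), and your collection of the \(|w|^2\), \(w^2\), linear and constant terms correctly yields \(\Theta_\alpha\), \(N_\alpha\), \(M_\alpha\) and \(C_{\alpha,p}\).
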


Define
\[B(z, w)=\exp\left(-\frac{|w|^2+ |z|^2}{2}+\beta\overline w^{\,2}+\delta\overline w
  +\alpha z^2+\eta z+\gamma z\overline w\right),\quad z, w\in\C.\]
Then
\begin{equation}\label{eq:B}
B(z, w)=\e^{-\frac{|z|^2}{2}}Tk_w(z)=\e^{-\frac{|z|^2+ |w|^2}{2}}H(z, w),\quad z, w\in\C.
\end{equation}

\begin{lemma}\label{lem: B-integral}
For \(0<p<\infty\),
\[
\left(\int_{\C}|B(z, w)|^p\dd\nu_p(z)\right)^{1/p}=C_{\alpha, p}\mathcal R(w),
\]
\[
   \left(\int_{\C}|B(z, w)|^p\dd\nu_p(w)\right)^{1/p}
   =
   C_{\beta,p}\mathcal C(z).
\]
where \(d\nu_p=\frac{p}{2\pi}d A,\) and
\[
C_{\beta, p}=\left(1-4|\beta|^2\right)^{-1/2p}\exp\left(\frac{1}{2}\frac{|\delta|^2+2\Real(\bar\beta \delta^2)}{1-4|\beta|^2}\right).
\]
\end{lemma}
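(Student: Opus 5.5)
The first identity needs no new computation. By \eqref{eq:B} we have \(B(z,w)=\e^{-|z|^2/2}Tk_w(z)\), so
\[
\int_{\C}|B(z,w)|^p\,d\nu_p(z)=\frac{p}{2\pi}\int_{\C}\bigl|Tk_w(z)\e^{-|z|^2/2}\bigr|^p\,dA(z)=\|Tk_w\|_p^p .
\]
Lemma~\ref{lem:kernels} then gives the first identity, \(C_{\alpha,p}\mathcal R(w)\). To be safe I would also confirm the constant by writing \(Tk_w(z)=\e^{-|w|^2/2}\e^{\beta\bar w^2+\delta\bar w}\,\e^{\alpha z^2+(\gamma\bar w+\eta)z}\). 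The second formula of Lemma~\ref{lem:Gaussian integral}, with \(b=\alpha\) and \(c=\gamma\bar w+\eta\), yields
\[
\|Tk_w\|_p=(1-4|\alpha|^2)^{-1/2p}\,\e^{-|w|^2/2}\,\bigl|\e^{\beta\bar w^2+\delta\bar w}\bigr|\,\exp\Bigl(\tfrac12\,\tfrac{|c|^2+2\Real(\bar\alpha c^2)}{1-4|\alpha|^2}\Bigr).
\]
Expanding \(|c|^2=|\gamma|^2|w|^2+2\Real(\gamma\bar w\bar\eta)+|\eta|^2\) and \(\Real(\bar\alpha c^2)=\Real(\bar\alpha\gamma^2\bar w^2+2\bar\alpha\gamma\eta\bar w+\bar\alpha\eta^2)\), I would collect terms by degree.

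The \(|w|^2\) terms give \(\tfrac12\bigl(\tfrac{|\gamma|^2}{1-4|\alpha|^2}-1\bigr)|w|^2\). For the \(w^2\) terms, use \(\Real(\beta\bar w^2)=\Real(\bar\beta w^2)\) and \(\Real(\bar\alpha\gamma^2\bar w^2)=\Real(\alpha\bar\gamma^2w^2)\); together these give \(\Real(N_\alpha w^2)\). For the linear terms, \(\Real(\delta\bar w)=\Real(\bar\delta w)\), \(\Real(\gamma\bar\eta\bar w)=\Real(\bar\gamma\eta w)\) and \(\Real(2\bar\alpha\gamma\eta\bar w)=\Real(2\alpha\bar\gamma\bar\eta w)\); together these give \(\Real(M_\alpha w)\). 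The constant term is exactly the one in \(C_{\alpha,p}\). This matches \eqref{eq:NalphaMalpha} and \eqref{eq:Thetaalpha}.

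For the second identity I would use the symmetry \(T\leftrightarrow T^\#\) noted in Subsection~\ref{subsec:action}. The kernel of \(T^\#\) is \(\overline{H(z,w)}\) with the roles of the variables swapped, so
\[
|B(z,w)|=\e^{-|w|^2/2}\,\bigl|T^\#k_z(w)\bigr| .
\]
Applying the first identity to \(T^\#\), whose parameters are \((\bar\beta,\bar\alpha,\bar\gamma,\bar\delta,\bar\eta)\), turns \(C_{\alpha,p}\) into \(C_{\beta,p}\). It also turns \(N_\alpha\) into \(\alpha+\bar\beta\gamma^2/(1-4|\beta|^2)=N_\beta\) and \(M_\alpha\) into \(\eta+(\gamma\bar\delta+2\bar\beta\gamma\delta)/(1-4|\beta|^2)=M_\beta\), so \(\mathcal R\) becomes \(\mathcal C\). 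The standing hypothesis \(|\beta|<1/2\) is what makes Lemma~\ref{lem:Gaussian integral} applicable here, since it now plays the role of \(|\alpha|<1/2\).

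The computation is routine. The only place an error can creep in is the bookkeeping of conjugations in the cross terms, so I would check it term by term, or simply redo the Gaussian integral in \(w\) directly as a cross-check.
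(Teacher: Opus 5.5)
Your proposal is correct and matches the paper's proof. The paper reads the first identity off from \eqref{eq:B} and Lemma~\ref{lem:kernels}, and gets the second from the $T\leftrightarrow T^\#$ correspondence with parameters $(\bar\beta,\bar\alpha,\bar\gamma,\bar\delta,\bar\eta)$, exactly as you do. Your term-by-term check, which recovers $N_\alpha$, $M_\alpha$, $C_{\alpha,p}$ and shows they map to $N_\beta$, $M_\beta$, $C_{\beta,p}$, is accurate and only makes explicit the computation behind Lemma~\ref{lem:kernels}.
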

\begin{proof}
By Lemma \ref{lem:kernels},
\[
 \left(\int_{\C}|B(z, w)|^p\dd\nu_p(z)\right)^{1/p}= \left(\int_{\C}|Tk_w(z)\e^{-\frac{|z|^2}{2}}|^p d\nu_p(z)\right)^{1/p}=\|Tk_w\|_p=C_{\alpha, p}\mathcal R(w).
\]
By the correspondence of \(T\) and \(T^\#\), the second equality holds.
\end{proof}

We now proceed to a deeper analysis of \(B(z, w).\)
Write
\[\alpha=\alpha_1+i\alpha_2,\qquad
 \gamma=\gamma_1+i\gamma_2,\qquad
 \eta=\eta_1+i\eta_2,\]
for \(\alpha_1, \alpha_2, \gamma_1, \gamma_2, \eta_1, \eta_2\in\R.\)
Define
\begin{equation*}
 H_\alpha=
 \begin{pmatrix}
  1-2\alpha_1&2\alpha_2\\
  2\alpha_2&1+2\alpha_1
 \end{pmatrix},
\end{equation*}
\[G_\gamma=
 \begin{pmatrix}
  \gamma_1&\gamma_2\\
  -\gamma_2&\gamma_1
 \end{pmatrix},
 \qquad
 e_\eta=\binom{\eta_1}{-\eta_2}.\]
Since \(|\alpha|<1/2\), \(\det H_\alpha=1-4|\alpha|^2>0\). Hence \(H_\alpha\) is positive definite and invertible.
Denote
\begin{equation}\label{eq:L-alpha}
 L_\alpha=H_\alpha^{-1}G_\gamma,
 \qquad
 d_\alpha=H_\alpha^{-1}e_\eta.
\end{equation}
Correspondingly, we can define
\[
H_\beta=
 \begin{pmatrix}
  1-2\beta_1&-2\beta_2\\
  -2\beta_2&1+2\beta_1
 \end{pmatrix},\quad L_\beta=H_\beta^{-1}G_\gamma^{\mathsf T},\quad d_\beta=H_\beta^{-1}e_{\bar\delta},\]
 where \(\beta=\beta_1+i\beta_2\).

\begin{lemma}\label{lem:B-sum}
For \(z, w\in\C\), identify \(z\) and \(w\) with
\[{\rm z}=\binom{\Real z}{\operatorname{Im} z},\qquad {\rm w}=\binom{\Real w}{\operatorname{Im} w}\in\mathbb R^2.\]
Then
\begin{enumerate}
\item[{{\rm (i)}}]
\begin{equation}\label{eq:row}
|B(z, w)|=c_\alpha\mathcal R(w)\exp\bigl(-\frac12({\rm z}-L_\alpha {\rm w}-d_\alpha)^{\mathsf T}
 H_\alpha({\rm z}-L_\alpha {\rm w}-d_\alpha)\bigr),
 \end{equation}
 \begin{equation}\label{eq:row-inequality}
c_\alpha\mathcal R(w) \e^{-\frac{1+2|\alpha|}{2}|{\rm z}-L_\alpha {\rm w}-d_\alpha|^2}\le |B(z, w)|
 \le c_\alpha\mathcal R(w)
 \e^{-\frac{1-2|\alpha|}{2}|{\rm z}-L_\alpha {\rm w}-d_\alpha|^2}.
\end{equation}
where
\begin{equation}\label{eq:row-functions}
 c_\alpha=\exp\left(\frac{1}{2}\frac{|\eta|^2+2\Real(\bar\alpha \eta^2)}{1-4|\alpha|^2}\right).
\end{equation}
\item[{{\rm (ii)}}]
\begin{equation}\label{eq:colum}
 |B(z, w)|
 =c_\beta\mathcal C(z)
 \exp\bigl(-\frac12
 ({\rm w}-L_\beta {\rm z}-d_\beta)^{\mathsf T}
 H_\beta({\rm w}-L_\beta {\rm z}-d_\beta)\bigr),
\end{equation}
\begin{equation*}
c_\beta\mathcal C(z) \e^{-\frac{1+2|\beta|}{2}|{\rm w}-L_\beta {\rm z}-d_\beta|^2}\le |B(z, w)|
\le c_\beta\mathcal C(z)
 \e^{-\frac{1-2|\beta|}{2}|{\rm w}-L_\beta {\rm z}-d_\beta|^2},
\end{equation*}
where
\begin{equation*}
c_{\beta}=\exp\left(\frac{1}{2}\frac{|\delta|^2+2\Real(\bar\beta \delta^2)}{1-4|\beta|^2}\right).
\end{equation*}
\end{enumerate}
\end{lemma}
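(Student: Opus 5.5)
The plan is to compute \(\log|B(z,w)|\) as a real quadratic form in the real vector \(({\rm z},{\rm w})\in\R^4\) and complete the square in \({\rm z}\). Part (ii) then follows from part (i) through the correspondence \(T\leftrightarrow T^\#\), since \(|B|\) is symmetric under swapping the roles of \((z,\alpha,\eta)\) and \((w,\bar\beta,\bar\delta)\) with \(\gamma\mapsto\bar\gamma\).

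The first step is to write
\[\log|B(z,w)|=-\tfrac12|z|^2-\tfrac12|w|^2+\Real(\alpha z^2+\eta z)+\Real(\beta\bar w^2+\delta\bar w)+\Real(\gamma z\bar w).\]
In real coordinates, \(-\tfrac12|z|^2+\Real(\alpha z^2)=-\tfrac12{\rm z}^{\mathsf T}H_\alpha{\rm z}\), using \(\Real(\alpha z^2)=\alpha_1(x^2-y^2)-2\alpha_2xy\), which matches the entries of \(H_\alpha\). We also have \(\Real(\eta z)={\rm z}^{\mathsf T}e_\eta\) and \(\Real(\gamma z\bar w)={\rm z}^{\mathsf T}G_\gamma{\rm w}\); the latter needs a direct check that the sign convention of \(G_\gamma\) is right. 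Hence the \({\rm z}\)-dependent part is
\[-\tfrac12{\rm z}^{\mathsf T}H_\alpha{\rm z}+{\rm z}^{\mathsf T}(G_\gamma{\rm w}+e_\eta).\]
Completing the square with \(H_\alpha\) positive definite gives
\[-\tfrac12({\rm z}-L_\alpha{\rm w}-d_\alpha)^{\mathsf T}H_\alpha({\rm z}-L_\alpha{\rm w}-d_\alpha)+\tfrac12(G_\gamma{\rm w}+e_\eta)^{\mathsf T}H_\alpha^{-1}(G_\gamma{\rm w}+e_\eta).\]

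The second step, and the main obstacle, is to identify the remainder
\[-\tfrac12|w|^2+\Real(\beta\bar w^2+\delta\bar w)+\tfrac12(G_\gamma{\rm w}+e_\eta)^{\mathsf T}H_\alpha^{-1}(G_\gamma{\rm w}+e_\eta)\]
with \(\log c_\alpha+\Theta_\alpha(w)\). Rather than expanding matrices, I would do this by a shortcut. The Gaussian integral computation of Lemma~\ref{lem: B-integral} (via Lemma~\ref{lem:Gaussian integral}) already gives \(\int|B(z,w)|^p\,d\nu_p(z)=C_{\alpha,p}^p\mathcal R(w)^p\). Integrating the completed-square expression in \({\rm z}\) yields \(\frac{p}{2\pi}\cdot\frac{2\pi}{p\sqrt{\det H_\alpha}}e^{pE(w)}\), where \(E(w)\) denotes the remainder. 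Since \(\det H_\alpha=1-4|\alpha|^2\), comparing the two gives \(e^{E(w)}=c_\alpha\mathcal R(w)\) exactly, with \(C_{\alpha,p}=(1-4|\alpha|^2)^{-1/2p}c_\alpha\). This avoids the tedious algebra. If a direct verification is preferred, one can instead use \(H_\alpha^{-1}=\frac{1}{1-4|\alpha|^2}\begin{pmatrix}1+2\alpha_1&-2\alpha_2\\-2\alpha_2&1-2\alpha_1\end{pmatrix}\), which in complex form acts as \(u\mapsto(u+2\bar\alpha\bar u)/(1-4|\alpha|^2)\). The quadratic form then becomes \(\Real\bigl(\bar u(u+2\bar\alpha\bar u)\bigr)/(1-4|\alpha|^2)\) with \(u=\bar\gamma\bar w+\bar\eta\) or a similar expression, and this reproduces \(N_\alpha\), \(M_\alpha\) and \(c_\alpha\).

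The third step gives the two-sided bounds \eqref{eq:row-inequality}. The eigenvalues of the symmetric matrix \(H_\alpha\) are \(1\pm2|\alpha|\), because its trace is \(2\) and its determinant is \(1-4|\alpha|^2\). Therefore
\[(1-2|\alpha|)|v|^2\le v^{\mathsf T}H_\alpha v\le(1+2|\alpha|)|v|^2,\]
and applying this with \(v={\rm z}-L_\alpha{\rm w}-d_\alpha\) in \eqref{eq:row} gives the inequalities. Part (ii) follows identically, with \(H_\beta\), \(G_\gamma^{\mathsf T}\) and \(e_{\bar\delta}\), after completing the square in \({\rm w}\) and using the second identity of Lemma~\ref{lem: B-integral}.
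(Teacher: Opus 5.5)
Your proposal is correct and follows the paper's proof essentially step for step: complete the square in ${\rm z}$ with $H_\alpha$, identify the ${\rm w}$-dependent remainder by integrating in ${\rm z}$ and comparing with Lemma~\ref{lem: B-integral} (the paper uses $p=1$, you use general $p$), read off the two-sided bounds from the eigenvalues $1\pm2|\alpha|$ of $H_\alpha$, and obtain (ii) through the $T\leftrightarrow T^\#$ correspondence. One correction to your optional direct route: the vector $G_\gamma{\rm w}+e_\eta$ corresponds to the complex number $u=\bar\gamma w+\bar\eta$, not $u=\bar\gamma\bar w+\bar\eta$.
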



\begin{proof} (i)\ Note that
\[|B(z, w)|=\exp\bigl(-\frac{|w|^2}{2}+\Real(\beta\bar{w}^2+\delta\bar{w})\bigr)\exp\bigl(-\frac{|z|^2}{2}
 +\Real\!\left(\alpha z^2+(\eta+\gamma\bar w)z\right)\bigr).\]
A direct computation shows that
\[-\frac{|z|^2}{2}
 +\Real\!\left(\alpha z^2+(\eta+\gamma\bar w)z\right)= -\frac12{\rm z}^{\mathsf T}H_\alpha {\rm z}
 +(G_\gamma {\rm w}+e_\eta)^{\mathsf T}{\rm z}.\]

Since \(H_\alpha\) is positive definite, we have
\begin{align*}
(G_\gamma {\rm w}+e_\eta)^{\mathsf T}{\rm z}=&(G_\gamma {\rm w}+e_\eta)^{\mathsf T}(H_\alpha^{-1})^{\mathsf T}H_\alpha {\rm z}\\
=&\!\left(H_\alpha^{-1}(G_\gamma {\rm w}+e_\eta)\right)^{\mathsf T}H_\alpha {\rm z}=(L_\alpha {\rm w}+d_\alpha)^{\mathsf T}H_\alpha {\rm z}.\end{align*}
It follows from
\[(L_\alpha {\rm w}+d_\alpha)^{\mathsf T}H_\alpha {\rm z}=\frac{1}{2}[(L_\alpha {\rm w}+d_\alpha)^{\mathsf T}H_\alpha {\rm z}+{\rm z}^{\mathsf T}H_\alpha(L_\alpha {\rm w}+d_\alpha)]\]
that
\begin{align*}
 &-\frac12{\rm z}^{\mathsf T}H_\alpha {\rm z}+(G_\gamma {\rm w}+e_\eta)^{\mathsf T}{\rm z}\\
 =&-\frac{1}{2}\left({\rm z}^{\mathsf T}H_\alpha {\rm z}-(L_\alpha {\rm w}+d_\alpha)^{\mathsf T}H_\alpha {\rm z}-{\rm z}^{\mathsf T}H_\alpha(L_\alpha {\rm w}+d_\alpha)\right)
 \\
 =&-\frac{1}{2}\left(({\rm z}-L_\alpha {\rm w}-d_\alpha)^{\mathsf T}H_\alpha({\rm z}-L_\alpha {\rm w}-d_\alpha)\right) +\frac{1}{2}(L_\alpha {\rm w}+d_\alpha)^{\mathsf T}H_\alpha(L_\alpha {\rm w}+d_\alpha).
\end{align*}
Therefore,
\[|B(z, w)|=\mathcal B(w)\exp\left(-\frac{1}{2}\left(({\rm z}-L_\alpha {\rm w}-d_\alpha)^{\mathsf T}H_\alpha({\rm z}-L_\alpha {\rm w}-d_\alpha)\right) \right),\]
where
\[\mathcal B(w)=\exp\left(-\frac{|w|^2}{2}+\Real(\beta\bar{w}^2+\delta\bar{w})+\frac{1}{2}(L_\alpha {\rm w}+d_\alpha)^{\mathsf T}H_\alpha(L_\alpha {\rm w}+d_\alpha)\right).\]
Since
\[\frac1\pi\int_{\mathbb R^2}
 \exp\left\{-\frac12{\rm z}^{\mathsf T}H_\alpha {\rm z}\right\}dA(z)
 =\frac{2}{\sqrt{1-4|\alpha|^2}},\]
 we obtain
\[\frac1\pi\int_{\C}|B(z, w)|dA(z)=\frac{2}{\sqrt{1-4|\alpha|^2}}\mathcal B(w).\]
On the other hand, by Lemma \ref{lem: B-integral},
\[\frac1\pi\int_{\C}|B(z, w)|dA(z)=2C_{\alpha, 1}\mathcal R(w).\]
Thus, we have
\[\mathcal B(w)=c_\alpha\mathcal R(w),\]
where
\(
 c_\alpha=\sqrt{1-4|\alpha|^2}C_{\alpha, 1}.
\) This gives \eqref{eq:row} and \eqref{eq:row-functions}.

Since \(H_\alpha\) is positive definite  and
\(1+2|\alpha|, 1-2|\alpha|\) are the  two eigenvalues of \(H_\alpha\), \eqref{eq:row-inequality} follows from \eqref{eq:row}.


(ii) is obtained as (i) by the correspondence of \(T\) and \(T^\#\).
\end{proof}

For simplicity, we shall not distinguish between \(z\in\C\) and \(\rm z=\binom{\Real z}{\operatorname{Im} z}\in\R^2\)
in what follows.

\begin{lemma}\label{lem: B-infty}
For any \(w\in\C\),
\[\|Tk_w\|_\infty=\sup_{z\in\C}|B(z, w)|=c_\alpha\mathcal{R}(w),\]
\end{lemma}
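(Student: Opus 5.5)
The first equality comes directly from the definitions, and the second from the exact Gaussian representation in Lemma~\ref{lem:B-sum}(i). By Lemma~\ref{lem:kernels} and \eqref{eq:B}, $B(z,w)=\e^{-|z|^2/2}Tk_w(z)$. Hence
\[
\|Tk_w\|_\infty=\sup_{z\in\C}|Tk_w(z)|\e^{-|z|^2/2}=\sup_{z\in\C}|B(z,w)|.
\]
Here $Tk_w\in\F^\infty$ because $|\alpha|<1/2$, by Corollary~\ref{cor:nonzero in fp}. So it remains to compute the supremum of $|B(\cdot,w)|$ for fixed $w$.

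For this we use \eqref{eq:row}. Fix $w$ and write ${\rm u}={\rm z}-L_\alpha{\rm w}-d_\alpha$. Then
\[
|B(z,w)|=c_\alpha\mathcal R(w)\exp\bigl(-\tfrac12{\rm u}^{\mathsf T}H_\alpha{\rm u}\bigr).
\]
Since $H_\alpha$ is positive definite, the exponent is $\le 0$, with equality exactly at ${\rm u}=0$. Thus $|B(z,w)|\le c_\alpha\mathcal R(w)$ for every $z$.

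The bound is attained. As $z$ ranges over $\C$, i.e.\ ${\rm z}$ over $\R^2$, the choice ${\rm z}=L_\alpha{\rm w}+d_\alpha$ is admissible for every $w$. It makes ${\rm u}=0$, so $|B(z,w)|=c_\alpha\mathcal R(w)$ there. The supremum is therefore a maximum and equals $c_\alpha\mathcal R(w)$.

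There is no real obstacle: all the work is already in Lemma~\ref{lem:B-sum}. The only point worth stating explicitly is that the maximizer ${\rm z}=L_\alpha{\rm w}+d_\alpha$ is an unconstrained point of $\R^2\cong\C$, which requires no invertibility of $L_\alpha$ or of $\gamma$. The identity therefore also holds when $\gamma=0$.
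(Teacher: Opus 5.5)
Your proposal is correct and follows essentially the same route as the paper. You read off the first equality from \eqref{eq:B}, then use \eqref{eq:row} and the positive definiteness of $H_\alpha$ to get the upper bound $c_\alpha\mathcal R(w)$, which is attained at ${\rm z}=L_\alpha{\rm w}+d_\alpha$.
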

\begin{proof} Since
\[\|Tk_w\|_\infty=\sup_{z\in\C}|Tk_w(z)\e^{-|z|^2/2}|=\sup_{z\in\C}|B(z, w)|\]
and
by \eqref{eq:row},
\[c_\alpha\mathcal{R}(w)=|B(L_\alpha w+d_\alpha, w)|\le \sup_{z\in\C}|B(z, w)|\le c_\alpha\mathcal{R}(w),\]
we get
\[\|Tk_w\|_\infty=\sup_{z\in\C}|B(z, w)|=c_\alpha\mathcal{R}(w).\qedhere\]
\end{proof}

\begin{lemma}\label{lem:R-C-equivalence}
\(\sup_{w\in\C}\mathcal R(w)<\infty\) if and only if \(\sup_{z\in\C}\mathcal C(z)<\infty.\)
Moreover, assume \(\gamma\neq 0\), then
\(\mathcal R(w)\rightarrow 0\) as \(|w|\to\infty\) if and only if
\(\mathcal C(z)\rightarrow 0\) as \(|z|\to\infty\).
\end{lemma}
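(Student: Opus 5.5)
The plan is to compare the two factorizations of \(|B(z,w)|\) in Lemma~\ref{lem:B-sum}. Throughout we are in the setting of Subsection~\ref{subsec:action}, so \(|\alpha|<1/2\) and \(|\beta|<1/2\). Hence \(H_\alpha\) and \(H_\beta\) are positive definite, and the Gaussian factors in \eqref{eq:row} and \eqref{eq:colum} are bounded by \(1\). Each factor equals \(1\) exactly at \({\rm z}=L_\alpha{\rm w}+d_\alpha\) (respectively \({\rm w}=L_\beta{\rm z}+d_\beta\)).

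\textbf{First part.} By Lemma~\ref{lem: B-infty},
\[
\sup_{z\in\C}|B(z,w)|=c_\alpha\mathcal R(w).
\]
The same argument, applied to \eqref{eq:colum} (or transported through the correspondence between \(T\) and \(T^\#\)), gives
\[
\sup_{w\in\C}|B(z,w)|=c_\beta\mathcal C(z).
\]
Taking the supremum over the remaining variable in each identity yields
\[
c_\alpha\sup_{w}\mathcal R(w)=\sup_{z,w}|B(z,w)|=c_\beta\sup_{z}\mathcal C(z),
\]
with both sides possibly \(+\infty\). Since \(c_\alpha,c_\beta\) are finite positive constants, the first equivalence follows at once.

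\textbf{Second part.} Assume \(\gamma\neq0\) and \(\mathcal R(w)\to0\) as \(|w|\to\infty\). For a given \(z\), the supremum \(\sup_w|B(z,w)|\) is attained at \({\rm w}_z=L_\beta{\rm z}+d_\beta\), by \eqref{eq:colum}. Combining this with \eqref{eq:row} gives
\[
c_\beta\mathcal C(z)=|B(z,w_z)|=c_\alpha\mathcal R(w_z)\exp\bigl(-\tfrac12(\cdots)^{\mathsf T}H_\alpha(\cdots)\bigr)\le c_\alpha\mathcal R(w_z).
\]
It therefore suffices to show that \(|w_z|\to\infty\) as \(|z|\to\infty\). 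This holds because \(L_\beta=H_\beta^{-1}G_\gamma^{\mathsf T}\) is invertible: \(\det G_\gamma=\gamma_1^2+\gamma_2^2=|\gamma|^2\neq0\) and \(H_\beta\) is invertible. Consequently
\[
|w_z|\ge \|L_\beta^{-1}\|^{-1}|z|-|d_\beta|\to\infty.
\]
Hence \(\mathcal C(z)\le (c_\alpha/c_\beta)\,\mathcal R(L_\beta z+d_\beta)\to0\).

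The converse is symmetric. Evaluate at \({\rm z}_w=L_\alpha{\rm w}+d_\alpha\) to get \(c_\alpha\mathcal R(w)\le c_\beta\mathcal C(z_w)\). Then use the invertibility of \(L_\alpha=H_\alpha^{-1}G_\gamma\), again from \(\gamma\neq0\), to ensure \(|z_w|\to\infty\) as \(|w|\to\infty\).

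The only point needing care is this invertibility step, which is exactly where \(\gamma\neq0\) is used. Everything else is a direct reading of Lemma~\ref{lem:B-sum}. Without \(\gamma\neq 0\), the points \(w_z\) would stay bounded and the decay transfer would fail, which is consistent with the hypothesis appearing only in the second assertion.
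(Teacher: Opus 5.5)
Your proposal is correct and follows the paper's proof essentially step for step. The first part uses the same identity \(c_\alpha\sup_w\mathcal R(w)=\sup_{z,w}|B(z,w)|=c_\beta\sup_z\mathcal C(z)\), obtained from Lemma~\ref{lem: B-infty} and \eqref{eq:colum}; the second part evaluates the two factorizations of \(|B|\) at the peak point and uses the invertibility of \(L_\alpha\) (resp.\ \(L_\beta\)) forced by \(\gamma\neq0\), the only cosmetic difference being that the paper writes out the direction \(\mathcal C\to0\Rightarrow\mathcal R\to0\) via \(z=L_\alpha w+d_\alpha\) and calls the converse similar, whereas you write out the other direction in full.
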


\begin{proof}
By Lemma \ref{lem: B-infty},
\[
c_\alpha\sup_{w\in\C}\mathcal{R}(w)=\sup_{w\in\C}\sup_{z\in\C}|B(z, w)|.
\]
It follows from  \eqref{eq:colum} that for any \(z\in\C\),
\[c_\beta\mathcal{C}(z)=|B(z, L_\beta z+d_\beta)|\le \sup_{w\in\C}|B(z, w)|\le c_\beta\mathcal{C}(z).\]
Hence
\[
c_\beta\sup_{z\in\C}\mathcal{C}(z)=\sup_{z\in\C}\sup_{w\in\C}|B(z, w)|.
\]
Therefore,
\[
c_\alpha\sup_{w\in\C}\mathcal{R}(w)=c_\beta\sup_{z\in\C}\mathcal{C}(z).
\]
The first assertion follows.

Assume \(\gamma\neq 0\), then \(\det G_\gamma=|\gamma|^2>0\), hence \(G_\gamma\) is invertible and thus \(L_\alpha\) is invertible by \eqref{eq:L-alpha}.
 By Lemma \ref{lem:B-sum},
\begin{align}\label{eq:B-z-w}
&\ c_\alpha\mathcal R(w)
 \exp\bigl(-\frac12
 (z-L_\alpha w-d_\alpha)^{\mathsf T}
 H_\alpha(z-L_\alpha w-d_\alpha)\bigr)\nonumber\\
 =&\ c_\beta\mathcal C(z)
 \exp\bigl(-\frac12
 (w-L_\beta z-d_\beta)^{\mathsf T}
 H_\beta(w-L_\beta z-d_\beta)\bigr).
\end{align}

Assume \(\mathcal C(z)\rightarrow 0\) as \(|z|\rightarrow\infty\). For any \(w\neq 0\), let \(z=L_\alpha w+d_\alpha\), then when
\(|w|\rightarrow \infty\), we must have \(|{\rm z}|\rightarrow \infty\)  since \(L_\alpha\) is invertible. So by \eqref{eq:B-z-w}
\[c_\alpha \mathcal{R}(w)\le c_\beta\mathcal C(L_\alpha w+d_\alpha)\rightarrow 0 \qquad \text{as}\quad |w|\rightarrow\infty.\]
The converse is similar.
\end{proof}

\begin{remark}
When \(\gamma=0\), for \(z, w\in\C\),
\[\Theta_\alpha(w)=-\frac{1}{2}|w|^2+\Real(\bar{\beta}w^2+\bar{\delta}w),\qquad \Theta_\beta(z)=-\frac{1}{2}|z|^2+\Real(\alpha z^2+\eta z).\]
Since \(|\alpha|<1/2\), \(|\beta|<1/2\), naturally, we have
\[
 \Theta_\alpha(w)\rightarrow-\infty\quad(|w|\to\infty),
 \qquad
 \Theta_\beta(z)\rightarrow-\infty\quad(|z|\to\infty).
\]
Hence \(\mathcal R(w)\rightarrow 0\) as \(|w|\to\infty\) and
\(\mathcal C(z)\rightarrow 0\) as \(|z|\to\infty\).
\end{remark}

\section{Passage from the integral operator  to the matrix operator}\label{sec:passage}

Let \(\Lambda=r\mathbb{Z}^2\), where \(0<r<r_0\) as in the statement of Lemma \ref{lem:dense}. Now we introduce the matrix operator
\[
 (A\zeta)_\mu=\sum_{\lambda\in\Lambda}B(\mu,\lambda)\zeta_\lambda,
 \qquad  \zeta=\{\zeta_\lambda\}_{\lambda\in\Lambda},\quad \mu\in\Lambda.
\]
This section is devoted to establishing the relationship between the integral operator \(T\) and the matrix operator \(A\), as well as to a detailed investigation of \(A\). Assume throughout that
\(|\alpha|<\frac{1}{2}, |\beta|<\frac{1}{2}\).

As in the Remark \ref{rem:SpRp}, let \(R_q: \F^q\to \ell^q(\Lambda)\) and \(S_p: l^p(\Lambda)\to \F^p\) be the bounded operators from
Lemma \ref{lem:dense}. For any finitely supported sequence \(\zeta\) on \(\Lambda\) and any \(\mu\in\Lambda\), by \eqref{eq:B},
\[
(R_qTS_p\zeta)_{\mu}=(TS_p\zeta)(\mu)e^{-\frac{|\mu|^2}{2}}=
\sum_{\lambda\in \Lambda}\zeta_\lambda (Tk_\lambda)(\mu)e^{-\frac{|\mu|^2}{2}}=\sum_{\lambda\in \Lambda}B(\mu, \lambda)\zeta_\lambda.
\]
That is,
\begin{align}\label{eq:RTS=A}
 R_qTS_p\zeta=A\zeta.
\end{align}

\begin{proposition}\label{prop:passage}
Let \(0<p<\infty, 0<q\le\infty\), and let \(A=(B(\mu,\lambda))_{\mu, \lambda\in\Lambda}\) be the lattice
matrix defined above. Then \(T:\F^p\to\F^q\) is bounded if and only if \(A: \ell^p(\Lambda)\to\ell^q(\Lambda)\) is bounded.
\end{proposition}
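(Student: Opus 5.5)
Both directions go through the identity \eqref{eq:RTS=A}, $R_qTS_p\zeta=A\zeta$, valid for finitely supported $\zeta$. The properties of $R_q$ and $S_p$ from Remark~\ref{rem:SpRp} let us pass between $T$ and $A$. The hypothesis $p<\infty$ is used so that finitely supported sequences are dense in $\ell^p(\Lambda)$ and the series $\sum_\lambda \zeta_\lambda k_\lambda$ converge in $\F^p$.

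\emph{$T$ bounded $\Rightarrow$ $A$ bounded.} For finitely supported $\zeta$ we estimate
\[\|A\zeta\|_{\ell^q}=\|R_qTS_p\zeta\|_{\ell^q}\le \|R_q\|\,\|T\|\,\|S_p\|\,\|\zeta\|_{\ell^p}.\]
For general $\zeta\in\ell^p(\Lambda)$, each entry $(A\zeta)_\mu=\sum_\lambda B(\mu,\lambda)\zeta_\lambda$ converges absolutely. Indeed, by \eqref{eq:colum} we have $|B(\mu,\lambda)|\le c_\beta\mathcal C(\mu)e^{-\frac{1-2|\beta|}{2}|\lambda-L_\beta\mu-d_\beta|^2}$, which lies in $\ell^{p'}$ in $\lambda$ (and in $\ell^\infty$), so $\zeta\mapsto(A\zeta)_\mu$ is continuous on $\ell^p$; for $p\le1$ it suffices that the entries are bounded. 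We truncate $\zeta$ to finite sets $\zeta^{(n)}\to\zeta$ in $\ell^p$, so that $(A\zeta^{(n)})_\mu\to(A\zeta)_\mu$ for every $\mu$. Fatou's lemma, applied over finite subsets of $\mu$ for the $\ell^q$ norm (or directly to the supremum when $q=\infty$), then gives $\|A\zeta\|_{\ell^q}\le C\|\zeta\|_{\ell^p}$.

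\emph{$A$ bounded $\Rightarrow$ $T$ bounded.} Let $f\in\F^p$. By Remark~\ref{rem:SpRp}, $f=S_p\zeta$ with $\|\zeta\|_{\ell^p}\le C_{r,p}\|f\|_p$. We need to know that $Tf=\sum_\lambda \zeta_\lambda Tk_\lambda$ pointwise. Since $p<\infty$, the partial sums $S_p\zeta^{(n)}$ converge to $f$ in $\F^p$. The map $f\mapsto Tf(z)=\langle f,\overline{H(z,\cdot)}\rangle$ (pairing with $w\mapsto \overline{H(z,w)}$ in the conjugate-linear slot) is a continuous functional on $\F^p$ by Corollary~\ref{cor:functional}, because $|\beta|<1/2$. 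Hence $Tf(z)=\lim_n TS_p\zeta^{(n)}(z)$ for every $z$. Consequently
\[R_qTf=\lim_n A\zeta^{(n)}=A\zeta\]
entrywise, using the continuity of the entries of $A$ established above. Therefore $Tf$ is entire with $\|R_qTf\|_{\ell^q}\le\|A\|\,C_{r,p}\|f\|_p$. If the sampling inequality of Lemma~\ref{lem:dense}(ii) applies to $Tf$, we obtain $\|Tf\|_q\le C_{r,q}\|A\|C_{r,p}\|f\|_p$.

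\emph{Main obstacle.} That sampling inequality is stated only for functions already known to lie in $\F^q$, whereas here we only know that $Tf$ is entire. I would handle this by a dilation argument. For $f=S_p\zeta$ with $\zeta$ finitely supported, $Tf$ is a finite combination of the functions $Tk_\lambda=H(\cdot,\lambda)e^{-|\lambda|^2/2}$, each of which lies in $\F^q$ because $|\alpha|<1/2$ (Corollary~\ref{cor:nonzero in fp}). So the sampling bound applies to $Tf$ and gives $\|TS_p\zeta\|_q\le C\|\zeta\|_{\ell^p}$ uniformly over finitely supported $\zeta$. For general $f$, the pointwise convergence $TS_p\zeta^{(n)}\to Tf$, combined with Fatou's lemma for $q<\infty$ (or a pointwise supremum bound for $q=\infty$), gives $Tf\in\F^q$ with $\|Tf\|_q\le C\|f\|_p$. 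This approximation step is the key point. The estimate on the entries of $A$ is routine from Lemma~\ref{lem:B-sum} together with Lemma~\ref{lem: lattices}.
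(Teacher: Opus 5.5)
Your proposal is correct and follows essentially the same route as the paper: you apply the sampling inequality to $TS_p\zeta$ for finitely supported $\zeta$ (legitimate since each $Tk_\lambda\in\F^q$), then pass to a general $f=S_p\zeta$ via the pointwise convergence $TS_p\zeta^{(n)}(z)\to Tf(z)$ from Corollary~\ref{cor:functional}. The only real difference is the limit step. The paper shows $\{TS_p\zeta^{(n)}\}$ is Cauchy in $\F^q$ and uses completeness, which also yields that $Tf$ is entire for free. Your Fatou argument instead needs the separate, easy fact that $Tf$ is entire, which follows from local uniform boundedness plus Vitali/Montel, or directly from the integral formula. Also, what you call a ``dilation argument'' is really this truncation argument.
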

\begin{proof}

Since the set of finitely supported sequences are dense in \(\ell^p(\Lambda)\),  the boundedness of \(A: \ell^p(\Lambda)\to\ell^q(\Lambda)\) follows from \eqref{eq:RTS=A}, the boundedness of \(T:\F^p\to\F^q\), \(R_q\) and \(S_p\).

Suppose that \(A:\ell^p(\Lambda)\to\ell^q(\Lambda)\) is bounded. For any finitely supported sequence \(\zeta\) on \(\Lambda\), by Lemma \ref{lem:dense} (ii) and \eqref{eq:RTS=A},
\begin{align}\label{eq:A-T}
\|TS_p\zeta\|_q\le C_{r, q}\|R_q(TS_p\zeta)\|_{\ell^q(\Lambda)}=C_{r, q}\|A\zeta\|_{\ell^q(\Lambda)}\le C_{r, q}\|A\|\|\zeta\|_{\ell^p(\Lambda)},
\end{align}
Given \(f\in\F^p\),
choose an atomic representation \(f=S_p\zeta\) with
\(\|\zeta\|_{\ell^p(\Lambda)}\le C_{r, p}\|f\|_p\). Truncating \(\zeta\) gives finitely supported sequences
\(\zeta^{(n)}\to \zeta\) in \(\ell^p(\Lambda)\).
Applying \eqref{eq:A-T} to \(\zeta^{(n)}-\zeta^{(m)}\)
shows that \(\{TS_p\zeta^{(n)}\}\) is a Cauchy sequence in \(\F^q\), hence \(\lim_{n\to\infty}TS_p\zeta^{(n)}\) exists in \(\F^q\). On the other hand, since \(S_p\zeta^{(n)}\to S_p\zeta\) in \(\F^p\),  by Corollary \ref{cor:functional}, for any \(z\in \C\),
\[\lim_{n\to\infty}(TS_p\zeta^{(n)})(z)= (TS_p\zeta)(z)=(Tf)(z)\]
It follows from \eqref{eq:A-T} that
\[\|Tf\|_q\le C_{r, q}\|A\|\|\zeta\|_{\ell^p(\Lambda)}\le C_{r, q}C_{r, p}\|A\|\|f\|_p.\]
Thus \(T:\F^p\to\F^q\) is bounded
\end{proof}

In the following, we always assume that \(\gamma\neq0\). Now we use the notation as in Lemma \ref{lem:B-sum}.
Since
\(L_\alpha=H_\alpha^{-1}G_\gamma\)
and
\[
  \det G_\gamma=|\gamma|^2>0,
  \qquad
  \det H_\alpha=1-4|\alpha|^2>0,
\]
we have
\[
  \det L_\alpha
  =\frac{|\gamma|^2}{1-4|\alpha|^2}>0.
\]
Thus \(L_\alpha\) is invertible.
Let \(m_\alpha>0\) denote the smallest singular value of
\(L_\alpha\). Then
\begin{align}\label{eq:Lalpha}
|L_\alpha z-L_\alpha w|\ge m_\alpha|z-w|,\qquad z, w\in \C.
\end{align}
For \(\lambda\in\Lambda\), let
\[z_\lambda=L_\alpha\lambda+d_\alpha.\]
By Lemma \ref{lem: lattices} and \eqref{eq:Lalpha},  we have
\begin{equation}\label{eq:sup1}
\begin{array}{c}
\displaystyle{ \sup_{\lambda\in\Lambda}\sum_{\mu\in\Lambda}\e^{-\frac{1-2|\alpha|}{2}|\mu-z_\lambda|^2}<\infty,}\\
\displaystyle{
\sup_{\mu\in\Lambda}\sum_{\lambda\in\Lambda}\e^{-\frac{1-2|\alpha|}{2}|\mu-z_\lambda|^2}\le  \sup_{\mu\in\Lambda}\sum_{\lambda\in\Lambda}\e^{-\frac{1-2|\alpha|}{2} m_\alpha^2|L_\alpha^{-1}(\mu-d_\alpha)-\lambda|^2}<\infty.}
\end{array}
\end{equation}

Denote
\[ r_\lambda=\mathcal R(\lambda),\qquad \lambda\in\Lambda\]
 and define the diagonal matrix \(D\) as
 \[D\zeta=\{r_\lambda \zeta_\lambda\}_{\lambda\in\Lambda},\qquad \zeta=\{\zeta_\lambda\}_{\lambda\in\Lambda}.\]

\begin{lemma}\label{lem:AlessD}
Assume that \(\gamma\neq0\). Fix \(0<q\le\infty\).
There exists a positive  constant \(C\) (depending only on  the fixed parameters in \(H\), the lattice and \(q\)) such that for any finitely supported sequence \(\zeta=\{\zeta_\lambda\}_{\lambda\in\Lambda}\),
\begin{equation*}
  \|A\zeta\|_{\ell^q(\Lambda)}
  \le C
  \|D\zeta\|_{\ell^q(\Lambda)}.
\end{equation*}
\end{lemma}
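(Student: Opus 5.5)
The plan is to compare $A$ with $D$ entrywise, using the upper bound in \eqref{eq:row-inequality}. For $\mu,\lambda\in\Lambda$ that bound gives
\[
|B(\mu,\lambda)|\le c_\alpha\, r_\lambda\, \e^{-\frac{1-2|\alpha|}{2}|\mu-z_\lambda|^2},\qquad z_\lambda=L_\alpha\lambda+d_\alpha .
\]
Writing $\xi_\lambda=r_\lambda\zeta_\lambda=(D\zeta)_\lambda$ and $K(\mu,\lambda)=\e^{-\frac{1-2|\alpha|}{2}|\mu-z_\lambda|^2}$, we get $|(A\zeta)_\mu|\le c_\alpha\sum_\lambda K(\mu,\lambda)|\xi_\lambda|$. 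All sums are finite because $\zeta$ is finitely supported. It therefore suffices to show that the positive matrix $K$ is bounded on $\ell^q(\Lambda)$ for every $0<q\le\infty$, with a constant depending only on the parameters, the lattice and $q$.

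The estimates \eqref{eq:sup1} provide exactly the Schur-type bounds. They say
\[
S_1:=\sup_\lambda\sum_\mu K(\mu,\lambda)<\infty,\qquad S_2:=\sup_\mu\sum_\lambda K(\mu,\lambda)<\infty .
\]
The cases split as follows.
\begin{itemize}[leftmargin=2em]
\item For $q=\infty$: $|(A\zeta)_\mu|\le c_\alpha S_2\|\xi\|_\infty$.
\item For $q=1$: summing over $\mu$ and interchanging sums gives $c_\alpha S_1\|\xi\|_1$.
\item For $1<q<\infty$: apply Hölder with the splitting $K=K^{1/q'}K^{1/q}$, i.e.\ Schur's test. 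This gives $\|K\xi\|_q\le S_2^{1/q'}S_1^{1/q}\|\xi\|_q$.
\item For $0<q<1$: use $(\sum a_\lambda)^q\le\sum a_\lambda^q$. This yields $\sum_\mu|(A\zeta)_\mu|^q\le c_\alpha^q\sum_\lambda|\xi_\lambda|^q\sum_\mu K(\mu,\lambda)^q$. Now $K^q$ is again a Gaussian of the same form with exponent multiplied by $q$, so \eqref{eq:sup1}, or directly Lemma~\ref{lem: lattices} applied with $\sigma=q(1-2|\alpha|)/2$ and $\xi=z_\lambda$, bounds $\sup_\lambda\sum_\mu K(\mu,\lambda)^q$.
\end{itemize}

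The only point needing care is that the column sums in \eqref{eq:sup1} are uniform in $\lambda$ even though the centres $z_\lambda$ are not lattice points. This is handled by Lemma~\ref{lem: lattices}, whose bound is uniform in $\xi\in\C$. The row sums over $\lambda$ need the invertibility of $L_\alpha$, which is where $\gamma\neq0$ enters, via \eqref{eq:Lalpha}. Since both are already recorded in \eqref{eq:sup1}, no step is a real obstacle. The mild subtlety is the quasi-Banach range $q<1$, where Schur's test fails and is replaced by the $q$-triangle inequality as above.
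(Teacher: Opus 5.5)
Your proposal is correct and follows essentially the same argument as the paper. Both use the entrywise bound from \eqref{eq:row-inequality}, then Schur's test (weighted H\"older) with the row and column sums in \eqref{eq:sup1} for $1\le q<\infty$, the $q$-triangle inequality with the Gaussian exponent scaled by $q$ for $0<q<1$, and the row-sum bound for $q=\infty$. Your explicit appeal to Lemma~\ref{lem: lattices} with $\sigma=q(1-2|\alpha|)/2$ spells out a step that the paper only cites loosely as \eqref{eq:sup1}.
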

\begin{proof}
Let \(\zeta=\{\zeta_\lambda\}\) be finitely supported on \(\Lambda\). Then by \eqref{eq:row-inequality},
\begin{align}\label{eq:Amu-bound}
 |(A\zeta)_\mu|\le c_\alpha\sum_{\lambda\in\Lambda}\e^{-\frac{1-2|\alpha|}{2} |\mu-z_\lambda|^2}|r_\lambda\zeta_\lambda|,\qquad \mu\in\Lambda.
\end{align}

Suppose first that \(1\le q<\infty\).  By \eqref{eq:Amu-bound} and the weighted H\"older inequality,
\begin{align*}
 |(A\zeta)_\mu|^q
 &\le
c_\alpha^q\left(\sum_{\lambda\in\Lambda}\e^{-\frac{1-2|\alpha|}{2} |\mu-z_\lambda|^2}\right)^{q-1}\sum_{\lambda\in\Lambda}
 \e^{-\frac{1-2|\alpha|}{2} |\mu-z_\lambda|^2}(|r_\lambda\zeta_\lambda|)^q\\
 &\le
 c_\alpha^q\left(\sup_{\mu\in\Lambda}\sum_{\lambda\in\Lambda}\e^{-\frac{1-2|\alpha|}{2} |\mu-z_\lambda|^2}\right)^{q-1}\sum_{\lambda\in\Lambda}
 \e^{-\frac{1-2|\alpha|}{2} |\mu-z_\lambda|^2}(|r_\lambda\zeta_\lambda|)^q.
\end{align*}
Therefore, by \eqref{eq:sup1},
\begin{align*}\|A\zeta\|_{\ell^q(\Lambda)}^q &\le
c_\alpha^q\left(\sup_{\mu\in\Lambda}\sum_{\lambda\in\Lambda}\e^{-\frac{1-2|\alpha|}{2} |\mu-z_\lambda|^2}\right)^{q-1}
\sum_{\lambda\in\Lambda}
 \big{(}\sum_{\mu\in\Lambda}\e^{-\frac{1-2|\alpha|}{2} |\mu-z_\lambda|^2}\big{)}|r_\lambda\zeta_\lambda|^q\\
&\le
c_\alpha^q\left(\sup_{\mu\in\Lambda}\sum_{\lambda\in\Lambda}\e^{-\frac{1-2|\alpha|}{2} |\mu-z_\lambda|^2}\right)^{q-1}
 \left(\sup_{\lambda\in\Lambda}\sum_{\mu\in\Lambda}\e^{-\frac{1-2|\alpha|}{2} |\mu-z_\lambda|^2}\right)\sum_{\lambda\in\Lambda}
|r_\lambda\zeta_\lambda|^q\\
&\le
 C\|D\zeta\|_{\ell^q(\Lambda)}^q,
\end{align*}
for some \(C>0\) .

For \(0<q\le1\), by \eqref{eq:Amu-bound},
\begin{align*}
 |(A\zeta)_\mu|^q \le c_\alpha^q
  \sum_{\lambda\in\Lambda}
 \e^{-\frac{1-2|\alpha|}{2} q|\mu-z_\lambda|^2}|r_\lambda\zeta_\lambda|^q,
\end{align*}
and again by \eqref{eq:sup1},
\begin{align*}
 \|A\zeta\|_{\ell^q(\Lambda)}^q &\le c_\alpha^q\left(\sup_{\lambda\in\Lambda}\sum_{\mu\in\Lambda}\e^{-\frac{1-2|\alpha|}{2} q|\mu-z_\lambda|^2}\right)
 \sum_{\lambda\in\Lambda}
|r_\lambda\zeta_\lambda|^q \le C\|D\zeta\|_{\ell^q(\Lambda)}^q.
\end{align*}
for some \(C>0\).

For \(q=\infty\),  by \eqref{eq:Amu-bound},
\begin{align*}
 \|A\zeta\|_{\ell^\infty(\Lambda)} &\le
 c_\alpha\sup_{\mu\in\Lambda}\sum_{\lambda\in\Lambda}\e^{-\frac{1-2|\alpha|}{2} |\mu-z_\lambda|^2}|r_\lambda\zeta_\lambda|\le C\|D\zeta\|_{\ell^\infty(\Lambda)}
\end{align*}
for some \(C>0\).
\end{proof}

\begin{lemma}\label{lem:DlessA}
Let \(0<q\le\infty\). There exists a sublattice decomposition of \(\Lambda=r\mathbb{Z}^2\) as in {\rm Lemma \ref{lem: lattices decomposition} (ii)}, and a positive constant \(c\) (depending only on  the fixed parameters in \(H(z, w)\), the lattice and \(q\)) with the following properties. For each \(1 \leq j \leq N\), there is an injective map
\[
  \pi_j:\Lambda_j\longrightarrow\Lambda,
\]
and the corresponding coordinate projection
\(P_j:\ell^q(\Lambda)\to\ell^q(\pi_j(\Lambda_j))\) such that for every finitely supported sequence \(\zeta\) on \(\Lambda_j\),
\begin{equation*}
 c
  \|D\zeta\|_{\ell^q(\Lambda_j)}
  \le \|P_jA\zeta\|_{\ell^q(\pi_j(\Lambda_j))}.
\end{equation*}
\end{lemma}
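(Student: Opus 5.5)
Take \(\pi_j(\lambda)=\mu_\lambda\), a nearest lattice point in \(\Lambda\) to \(z_\lambda=L_\alpha\lambda+d_\alpha\), so that \(|\mu_\lambda-z_\lambda|\le r/\sqrt2\) by \eqref{eq:nearest-lattice-point}. The plan is to show that, on a sufficiently separated sublattice \(\Lambda_j\), the matrix \(P_jA\) restricted to \(\Lambda_j\) is a small perturbation of a diagonal matrix whose diagonal entries are comparable to \(r_\lambda\).

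\textbf{Choice of \(K\) and injectivity.} Let \(M\) be the largest singular value of \(L_\alpha\). For \(\lambda\ne\lambda'\) in \(\Lambda_j\), \eqref{eq:Lalpha} gives
\[
|\mu_\lambda-\mu_{\lambda'}|\ge m_\alpha|\lambda-\lambda'|-\sqrt2\,r\ge m_\alpha K-\sqrt2\,r .
\]
So once \(K\) is large, \(\pi_j\) is injective. Moreover, for \(\lambda'\neq\lambda\) we have
\[
|\mu_\lambda-z_{\lambda'}|\ge m_\alpha|\lambda-\lambda'|-r/\sqrt2\ge \tfrac{m_\alpha}{2}|\lambda-\lambda'|
\]
as soon as \(K\ge \sqrt2\, r/m_\alpha\).

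\textbf{Diagonal and off-diagonal estimates.} For \(\zeta\) finitely supported on \(\Lambda_j\) and \(\lambda\in\Lambda_j\), write
\[
(A\zeta)_{\mu_\lambda}=B(\mu_\lambda,\lambda)\zeta_\lambda+\sum_{\lambda'\in\Lambda_j,\lambda'\ne\lambda}B(\mu_\lambda,\lambda')\zeta_{\lambda'}.
\]
The lower bound in \eqref{eq:row-inequality} gives
\[
|B(\mu_\lambda,\lambda)|\ge c_\alpha r_\lambda \e^{-\frac{1+2|\alpha|}{2}\frac{r^2}{2}}=:c_0r_\lambda .
\]
The upper bound in \eqref{eq:row-inequality} gives
\[
|B(\mu_\lambda,\lambda')|\le c_\alpha r_{\lambda'}\e^{-s|\lambda-\lambda'|^2},\qquad s=\tfrac{(1-2|\alpha|)m_\alpha^2}{8}.
\]
Hence \(|(P_jA\zeta)_{\mu_\lambda}|\ge c_0|r_\lambda\zeta_\lambda|-c_\alpha(E u)_\lambda\), where \(u=D\zeta\) on \(\Lambda_j\) and \(E\) is the off-diagonal Gaussian matrix \(E_{\lambda\lambda'}=\e^{-s|\lambda-\lambda'|^2}\) for \(\lambda\ne\lambda'\).

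\textbf{Smallness of \(E\).} We show \(\|E\|_{\ell^q(\Lambda_j)\to\ell^q(\Lambda_j)}\le \varepsilon(K)\) with \(\varepsilon(K)\to0\).
\begin{itemize}
\item For \(q\ge1\), apply Schur's test with row and column sums both bounded by \(C_K\) from Lemma~\ref{lem: lattices decomposition}~(ii), as in the proof of Lemma~\ref{lem:AlessD}.
\item For \(q<1\), use \(q\)-subadditivity with the exponent \(sq\) in place of \(s\) in Lemma~\ref{lem: lattices decomposition}~(ii).
\item For \(q=\infty\), take the supremum of the row sums.
\end{itemize}
In every case the bound is \(C_K\) or \(C_K^{1/q}\), which tends to \(0\).

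\textbf{Conclusion.} Since \(\pi_j\) is injective, the norm \(\|P_jA\zeta\|_{\ell^q(\pi_j(\Lambda_j))}\) equals the \(\ell^q(\Lambda_j)\)-norm of \(\lambda\mapsto(A\zeta)_{\mu_\lambda}\). The quasi-triangle inequality then gives
\[
\|P_jA\zeta\|\ge c_0\|u\|-c_\alpha\varepsilon(K)\|u\|,
\]
up to the constant \(2^{1/q-1}\) when \(q<1\); for \(q<1\) it is cleaner to use \(|a-b|^q\ge|a|^q-|b|^q\). Fixing \(K\) so large that \(c_\alpha\varepsilon(K)\le c_0/2\), in the appropriate \(q\)-power form, yields the claim with \(c=c_0/2\). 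The main obstacle is ensuring that the constant \(C_K\) is uniform and genuinely tends to zero under the distorted separation \(|\mu_\lambda-z_{\lambda'}|\gtrsim|\lambda-\lambda'|\). This is handled by the choice of \(s\) above, since Lemma~\ref{lem: lattices decomposition}~(ii) holds for every \(s>0\).
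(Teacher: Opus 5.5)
Your proposal is correct and follows essentially the same route as the paper. You use the nearest-lattice-point map $\pi_j(\lambda)$ to $z_\lambda=L_\alpha\lambda+d_\alpha$ and split $P_jA$ into a diagonal part bounded below by a multiple of $r_\lambda$ and an off-diagonal part made small as $K\to\infty$ via Lemma~\ref{lem: lattices decomposition}~(ii), treated separately for $q\ge1$, $0<q<1$ (via $|a+b|^q\ge|a|^q-|b|^q$) and $q=\infty$. The only cosmetic difference is that you bound $|\pi_j(\lambda)-z_{\lambda'}|\ge\frac{m_\alpha}{2}|\lambda-\lambda'|$ linearly once $K\ge\sqrt2\,r/m_\alpha$, whereas the paper uses $|a-b|^2\ge\frac12|a|^2-|b|^2$; this changes only the Gaussian exponent and an $r$-dependent constant.
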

\begin{proof}
Take a decomposition of \(\Lambda\) as in Lemma \ref{lem: lattices decomposition} for  some \(K\) to be determined later. Fix \(1\le j\le N\). For each
\(\lambda\in\Lambda_j\), choose a nearest lattice point
\(\pi_j(\lambda)\in\Lambda\) to \(z_\lambda\).
Then by \eqref{eq:nearest-lattice-point},
\[
|\pi_j(\lambda)-z_\lambda|\le r/\sqrt{2}
\]
and for \(\lambda', \lambda''\in \Lambda_j\), by \eqref{eq:Lalpha}
\[\begin{aligned}
|\pi_j(\lambda')-\pi_j(\lambda'')|&\ge |z_{\lambda'}-z_{\lambda''}|-|\pi_j(\lambda')-z_{\lambda'}|-|\pi_j(\lambda'')-z_{\lambda''}|\\
&\ge m_\alpha|\lambda'-\lambda^{\prime\prime}|-\sqrt{2}r.
\end{aligned}\]
By Lemma \ref{lem: lattices decomposition}, \(|\lambda'-\lambda^{\prime\prime}|\ge K\), so \(\pi_j: \Lambda_j\to \Lambda\) is injective when \(K\) is sufficiently
large.

By \eqref{eq:row-inequality},
\begin{align}\label{eq:central-lower}
|a_{\pi_j(\lambda),\lambda}|=|B(\pi_j(\lambda),\lambda)|
&\ge  c_\alpha r_\lambda \exp\bigl(-\frac{1+2|\alpha|}{2}|\pi_j(\lambda)-z_\lambda|^2\bigr)\nonumber\\
&\ge c_\alpha r_\lambda \exp\bigl(-\frac{1+2|\alpha|}{4}r^2\bigr)=c_{\alpha ,r}r_\lambda,
\end{align}
where \(c_{\alpha ,r}=c_\alpha\e^{-(1+2|\alpha|)r^2/4}\),
and by \eqref{eq:Lalpha},
\begin{align}\label{eq:central-upper}
|a_{\pi_j(\lambda),\lambda'}|&\le c_\alpha r_{\lambda'}\e^{-\frac{1-2|\alpha|}{2}|\pi_j(\lambda)-z_{\lambda'}|^2}\le c_\alpha r_{\lambda'}\e^{-\frac{1-2|\alpha|}{2}[\frac{1}{2}|z_\lambda-z_{\lambda'}|^2-|\pi_j(\lambda)-z_\lambda|^2]}\nonumber\\
&\le c_\alpha\e^{\frac{(1-2|\alpha|)r^2}{4}}r_{\lambda'}\e^{-\frac{(1-2|\alpha|)}{4}|z_\lambda-z_{\lambda'}|^2}\le c_\alpha\e^{\frac{(1-2|\alpha|)r^2}{4}} r_{\lambda'}\e^{-\frac{(1-2|\alpha|)}{4}m_\alpha^2|\lambda-\lambda'|^2}\nonumber\\
&=C_{\alpha, r} r_{\lambda'}\e^{-\frac{(1-2|\alpha|)}{4}m_\alpha^2|\lambda-\lambda'|^2},
\end{align}
where \(C_{\alpha, r}=c_\alpha\e^{(1-2|\alpha|)r^2/4}.\)

For \(\zeta=\{\zeta_\lambda\}\) finitely supported on \(\Lambda_j\), let
\[P_j A\zeta=\{(A\zeta)_{\pi_j(\lambda)}\}_{\lambda\in\Lambda_j}=\{\sum_{\lambda'\in\Lambda_j}a_{\pi_j(\lambda), \lambda'}\zeta_{\lambda'}\}_{\lambda\in\Lambda_j}\]
and write
\[P_jA\zeta=Q\zeta+E\zeta,\]
where
\[(Q\zeta)_{\pi_j(\lambda)}=a_{\pi_j(\lambda),\lambda}\zeta_\lambda\quad \text{and}
\quad   (E\zeta)_{\pi_j(\lambda)}=\sum_{\lambda'\in\Lambda_j\, \lambda'\neq\lambda}
  a_{\pi_j(\lambda),\lambda'}\zeta_{\lambda'}.\]

 \eqref{eq:central-lower} gives
\begin{align*}
  \|Q\zeta\|_{\ell^q(\pi_j(\Lambda_j))}^q&=\sum_{\lambda\in\Lambda_j}|(Q\zeta)_{\pi_j(\lambda)}|^q
  =\sum_{\lambda\in\Lambda_j}|a_{\pi_j(\lambda),\lambda}\zeta_\lambda|^q\\
  &\ge
  c_{\alpha, r}^q\sum_{\lambda\in\Lambda_j}|r_\lambda \zeta_\lambda|^q = c_{\alpha, r}^q\|D\zeta\|_{\ell^q(\Lambda_j)}^q,\quad 0<q<\infty,
\end{align*}
\begin{align*}
  \|Q\zeta\|_{\ell^\infty(\pi_j(\Lambda_j))}&=\sup_{\lambda\in\Lambda_j}|(Q\zeta)_{\pi_j(\lambda)}|
  =\sup_{\lambda\in\Lambda_j}|a_{\pi_j(\lambda),\lambda}\zeta_\lambda|\\
  &\ge
  c_{\alpha, r}\sup_{\lambda\in\Lambda_j}|r_\lambda \zeta_\lambda| = c_{\alpha, r}\|D\zeta\|_{\ell^\infty(\Lambda_j)},
\end{align*}
and \eqref{eq:central-upper} and Lemma \ref{lem: lattices decomposition} (ii) give
\begin{align*}
\|E\zeta\|_{\ell^q(\pi_j(\Lambda_j))}^q= &\sum_{\lambda_\in\Lambda_j}|(E\zeta)_{\pi_j(\lambda)}|^q
  \le \sum_{\lambda_\in\Lambda_j}\bigg(
  \sum_{\lambda'\in\Lambda_j, \lambda'\neq\lambda}
  |a_{\pi_j(\lambda),\lambda'}\zeta_{\lambda'}|\bigg)^q\\
  \le & C_{\alpha, r}^q\sum_{\lambda_\in\Lambda_j}\bigg(
  \sum_{\lambda'\in\Lambda_j, \lambda'\neq\lambda}
  \e^{-\frac{(1-2|\alpha|)}{4}m_\alpha^2|\lambda-\lambda'|^2}|r_{\lambda'}\zeta_{\lambda'}|\bigg)^q,\quad 0<q<\infty,
\end{align*}
\begin{align*}
\|E\zeta\|_{\ell^\infty(\pi_j(\Lambda_j))}= &\sup_{\lambda_\in\Lambda_j}|(E\zeta)_{\pi_j(\lambda)}|\le
   \sup_{\lambda_\in\Lambda_j}
  \sum_{\lambda'\in\Lambda_j, \lambda'\neq\lambda}
  |a_{\pi_j(\lambda),\lambda'}\zeta_{\lambda'}|\\
  \le & C_{\alpha, r}\sup_{\lambda_\in\Lambda_j}\left(
  \sum_{\lambda'\in\Lambda_j, \lambda'\neq\lambda}
  \e^{-\frac{(1-2|\alpha|)}{4}m_\alpha^2|\lambda-\lambda'|^2}|r_{\lambda'}\zeta_{\lambda'}|\right)\\
  \le& C_{\alpha, r}\left(\sup_{\lambda_\in\Lambda_j}
  \sum_{\lambda'\in\Lambda_j, \lambda'\neq\lambda}
  \e^{-\frac{(1-2|\alpha|)}{4}m_\alpha^2|\lambda-\lambda'|^2}\right)
  \sup_{\lambda'\in\Lambda_j}|r_{\lambda'}\zeta_{\lambda'}|\\
  \le & C_{\alpha, r}C_K\|D\zeta\|_{\ell^\infty(\Lambda_j)}.
  \end{align*}

When \(1\le q<\infty\),
 \begin{align*}
& \bigg(
  \sum_{\lambda'\in\Lambda_j, \lambda'\neq\lambda}
  \e^{-\frac{(1-2|\alpha|)}{4}m_\alpha^2|\lambda-\lambda'|^2}|r_{\lambda'}\zeta_{\lambda'}|\bigg)^q\\
\le&\ \bigg(
  \sum_{\lambda'\in\Lambda_j, \lambda'\neq\lambda}
  \e^{-\frac{(1-2|\alpha|)}{4}m_\alpha^2|\lambda-\lambda'|^2}\bigg)^{q-1} \sum_{\lambda'\in\Lambda_j, \lambda'\neq\lambda} \e^{-\frac{(1-2|\alpha|)}{4}m_\alpha^2|\lambda-\lambda'|^2}|r_{\lambda'}\zeta_{\lambda'}|^q \\
\le&\ \bigg(\sup_{\lambda\in\Lambda_j}
  \sum_{\lambda'\in\Lambda_j, \lambda'\neq\lambda}
  \e^{-\frac{(1-2|\alpha|)}{4}m_\alpha^2|\lambda-\lambda'|^2}\bigg)^{q-1}\sum_{\lambda'\in\Lambda_j, \lambda'\neq\lambda} \e^{-\frac{(1-2|\alpha|)}{4}m_\alpha^2|\lambda-\lambda'|^2}|r_{\lambda'}\zeta_{\lambda'}|^q.
 \end{align*}
Thus by Lemma \ref{lem: lattices decomposition} (ii),
 \begin{align*}
 &\|E\zeta\|_{\ell^q(\pi_j(\Lambda_j))}^q\\
 \le & C_{\alpha, r}^q\bigg(\sup_{\lambda\in\Lambda_j}
  \sum_{\lambda'\in\Lambda_j, \lambda'\neq\lambda}
  \e^{-\frac{(1-2|\alpha|)}{4}m_\alpha^2|\lambda-\lambda'|^2}\bigg)^{q-1}
  \sum_{\lambda_\in\Lambda_j}\sum_{\lambda'\in\Lambda_j, \lambda'\neq\lambda} \e^{-\frac{(1-2|\alpha|)}{4}m_\alpha^2|\lambda-\lambda'|^2}|r_{\lambda'}\zeta_{\lambda'}|^q\\
  \le& C_{\alpha, r}^q\left(
  \sup_{\lambda\in\Lambda_j}\sum_{\lambda'\in\Lambda_j, \lambda'\neq\lambda}\e^{-\frac{(1-2|\alpha|)}{4}m_\alpha^2|\lambda-\lambda'|^2}\right)^q
   \sum_{\lambda'\in\Lambda_j} |r_{\lambda'}\zeta_{\lambda'}|^q\\
   \le & C_{\alpha, r}^qC_K^q\|D\zeta\|_{\ell^q(\Lambda_j)}^q.
  \end{align*}
The triangle inequality yields
\[
  \|P_jA\zeta\|_{\ell^q(\pi_j(\Lambda_j))}
  \ge \|Q\zeta\|_{\ell^q(\pi_j(\Lambda_j))}-\|E\zeta\|_{\ell^q(\pi_j(\Lambda_j))}
  \ge   (c_{\alpha, r}-C_{\alpha, r}C_{K})
  \|D\zeta\|_{\ell^q(\Lambda_j)}.
\]

When \(0< q<1\),
 \[\begin{aligned}
 \bigg(
  \sum_{\lambda'\in\Lambda_j, \lambda'\neq\lambda}
  \e^{-\frac{(1-2|\alpha|)}{4}m_\alpha^2|\lambda-\lambda'|^2}|r_{\lambda'}\zeta_{\lambda'}|\bigg)^q
  &\le
  \sum_{\lambda'\in\Lambda_j, \lambda'\neq\lambda}
  \e^{-\frac{(1-2|\alpha|)}{4}qm_\alpha^2|\lambda-\lambda'|^2}|r_{\lambda'}\zeta_{\lambda'}|^q.
 \end{aligned}\]
Thus Lemma \ref{lem: lattices decomposition} (ii) gives
\begin{align*}
 \|E\zeta\|_{\ell^q(\pi_j(\Lambda_j))}^q\le & C_{\alpha, r}^q\sum_{\lambda\in\Lambda_j}\sum_{\lambda'\in\Lambda_j, \lambda'\neq\lambda} \e^{-\frac{(1-2|\alpha|)}{4}qm_\alpha^2|\lambda-\lambda'|^2}|r_{\lambda'}\zeta_{\lambda'}|^q\\
  \le&C_{\alpha, r}^q\left(
  \sup_{\lambda'\in\Lambda_j}\sum_{\lambda\in\Lambda_j, \lambda\neq\lambda'}\e^{-\frac{(1-2|\alpha|)}{4}qm_\alpha^2|\lambda-\lambda'|^2}\right)
   \sum_{\lambda'\in\Lambda_j} |r_{\lambda'}\zeta_{\lambda'}|^q\\
   \le & C_{\alpha, r}^qC_K\|D\zeta\|_{\ell^q(\Lambda_j)}^q.
  \end{align*}
Then
\[
  \|Q\zeta\|_{\ell^q(\pi_j(\Lambda_j))}^q
  \le
  \|P_jA\zeta\|_{\ell^q(\pi_j(\Lambda_j))}^q+\|E\zeta\|_{\ell^q(\pi_j(\Lambda_j))}^q,
\]
and therefore
\[
  \|P_jA\zeta\|_{\ell^q(\pi_j(\Lambda_j))}^q
  \ge
   (c_{\alpha, r}^q-C_{\alpha, r}^qC_{K})
  \|D\zeta\|_{\ell^q(\Lambda_j)}^q.
\]

When \(q=\infty\), the triangle inequality yields
\[
  \|P_jA\zeta\|_{\ell^\infty(\pi_j(\Lambda_j))}
  \ge \|Q\zeta\|_{\ell^\infty(\pi_j(\Lambda_j))}-\|E\zeta\|_{\ell^\infty(\pi_j(\Lambda_j))}\ge
   (c_{\alpha, r}-C_{\alpha, r}C_{K})
  \|D\zeta\|_{\ell^\infty(\Lambda_j)}.
\]

By Lemma \ref{lem: lattices decomposition}, \(C_K\to 0\) as \(K\to\infty\). So, we can take \(K\) sufficiently large such that
\(c_{\alpha, r}-C_{\alpha, r}C_{K}>0\) or \(c_{\alpha, r}^q-C_{\alpha, r}^qC_{K}>0\) when \(0<q<1\). Therefore, for \(0<q\le\infty\), there exists a positive constant \(c\) such that for any finitely supported sequence
\(\zeta\) on \(\Lambda_j\), \(1\le j\le N\),
\begin{equation*}
 c
  \|D\zeta\|_{\ell^q(\Lambda_j)}
  \le \|P_jA\zeta\|_{\ell^q(\pi_j(\Lambda_j))}.\qedhere
\end{equation*}
\end{proof}

 For fixed \(\mu\in\Lambda\), by Lemma \ref{lem:quadratic-integrability}, \(B(\mu, \lambda)\in\ell^t(\Lambda)(\subset \ell^\infty(\Lambda))\) for any \(0<t<\infty\) since \(|\beta|<1/2\). Therefore, for any \(0<p\le\infty\) and \(\zeta\in\ell^p(\Lambda)\), \(\sum_{\lambda\in\Lambda}B(\mu,\lambda)\zeta_\lambda\) is absolutely convergent and \((A\zeta)_\mu\) is well-defined. In particular, let \(\{\zeta^{(n)}\}\) be the truncated sequence of \(\zeta\), we have
\begin{eqnarray}\label{eq:A-truncation}
\begin{split}
\bigl(\sum_{\mu\in\Lambda}|(A\zeta)_\mu|^q\bigr)^{1/q}\le \limsup_{n}\|A\zeta^{(n)}\|_{\ell^q(\Lambda)},\qquad 0<q<\infty\\
\sup_{\mu\in\Lambda}|(A\zeta)_\mu|\le \limsup_{n}\|A\zeta^{(n)}\|_{\ell^\infty(\Lambda)}.
\end{split}
\end{eqnarray}
By the definition of \(D\), we also have
\begin{eqnarray}\label{eq:D-truncation}
\begin{split}
\bigl(\sum_{\mu\in\Lambda}|(D\zeta)_\mu|^q\bigr)^{1/q}\le \limsup_{n}\|D\zeta^{(n)}\|_{\ell^q(\Lambda)},\qquad 0<q<\infty\\
\sup_{\mu\in\Lambda}|(D\zeta)_\mu|\le \limsup_{n}\|D\zeta^{(n)}\|_{\ell^\infty(\Lambda)}.
\end{split}
\end{eqnarray}

\begin{proposition}\label{prop:A=D}
Assume that \(\gamma\neq0\). Fix \(0<p, q\le\infty\). Then
\(A: \ell^{p}(\Lambda)\to \ell^q(\Lambda)\) is bounded if and only if \(D: \ell^{p}(\Lambda)\to \ell^q(\Lambda)\) is bounded.
\end{proposition}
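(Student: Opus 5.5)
The plan is to transfer the two finite-sequence estimates, Lemma~\ref{lem:AlessD} (upper bound) and Lemma~\ref{lem:DlessA} (lower bound on each sublattice), to all of \(\ell^p(\Lambda)\). The tools for this are the density of finitely supported sequences and the truncation inequalities \eqref{eq:A-truncation} and \eqref{eq:D-truncation}. For \(p=\infty\) finitely supported sequences are not dense, so instead of density I will pass to the limit coordinatewise through truncations. This is exactly what \eqref{eq:A-truncation} and \eqref{eq:D-truncation} are designed for, since \((A\zeta)_\mu\) converges absolutely for every \(\zeta\in\ell^\infty(\Lambda)\).

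\emph{Step 1: \(D\) bounded implies \(A\) bounded.} Let \(\zeta\in\ell^p(\Lambda)\) and let \(\zeta^{(n)}\) be its truncations, so that \(\|\zeta^{(n)}\|_{\ell^p}\le\|\zeta\|_{\ell^p}\). Lemma~\ref{lem:AlessD} gives
\[
\|A\zeta^{(n)}\|_{\ell^q}\le C\|D\zeta^{(n)}\|_{\ell^q}\le C\|D\|\,\|\zeta\|_{\ell^p}.
\]
Applying \eqref{eq:A-truncation} then yields \(\|A\zeta\|_{\ell^q}\le C\|D\|\,\|\zeta\|_{\ell^p}\).

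\emph{Step 2: \(A\) bounded implies \(D\) bounded.} Take the decomposition \(\Lambda=\Lambda_1\cup\dots\cup\Lambda_N\) and the maps \(\pi_j\) from Lemma~\ref{lem:DlessA}. For \(\zeta\) finitely supported on \(\Lambda\), write \(\zeta=\sum_j \zeta\chi_{\Lambda_j}\). Since the coordinate projection \(P_j\) does not increase the \(\ell^q\) norm, applying Lemma~\ref{lem:DlessA} to \(\zeta\chi_{\Lambda_j}\) gives
\[
c\|D(\zeta\chi_{\Lambda_j})\|_{\ell^q}\le\|P_jA(\zeta\chi_{\Lambda_j})\|_{\ell^q}\le\|A\|\,\|\zeta\chi_{\Lambda_j}\|_{\ell^p}\le\|A\|\,\|\zeta\|_{\ell^p}.
\]
Because \(D\) is diagonal, \(D\zeta=\sum_{j=1}^N D(\zeta\chi_{\Lambda_j})\). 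Summing the \(N\) pieces with the (quasi-)triangle inequality, which costs a constant depending only on \(N\) and \(q\), gives \(\|D\zeta\|_{\ell^q}\le C'\|A\|\,\|\zeta\|_{\ell^p}\) for finitely supported \(\zeta\). For general \(\zeta\in\ell^p(\Lambda)\), including \(p=\infty\), I pass to truncations using \eqref{eq:D-truncation}.

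\emph{Main obstacle.} The delicate point is the identification in Step 2. The restriction \(P_jA(\zeta\chi_{\Lambda_j})\), indexed by \(\pi_j(\Lambda_j)\), must be exactly the vector that Lemma~\ref{lem:DlessA} controls, namely \(A\) applied to a sequence supported on \(\Lambda_j\) and read off at the points \(\pi_j(\lambda)\). Its \(\ell^q\) norm must be dominated by \(\|A(\zeta\chi_{\Lambda_j})\|_{\ell^q(\Lambda)}\), which holds because \(\pi_j\) is injective. The remaining checks are routine: the quasi-norm constants when \(q<1\), the fact that \(\|\zeta\chi_{\Lambda_j}\|_{\ell^p}\le\|\zeta\|_{\ell^p}\), and that \(N\) is fixed once \(K\) is chosen.
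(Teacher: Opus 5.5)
Your proof is correct and follows the same route as the paper: Lemma~\ref{lem:AlessD} with \eqref{eq:A-truncation} for one direction, and Lemma~\ref{lem:DlessA} on each sublattice $\Lambda_j$ with \eqref{eq:D-truncation} for the other. In Step~2 you are more careful than the paper. The paper applies Lemma~\ref{lem:DlessA} to a $\zeta$ supported on all of $\Lambda$ and asserts $\|D\zeta\|_{\ell^q}\lesssim\|A\zeta\|_{\ell^q}$. The lemma does not give that bound, and it fails already for $T=I$, where $A$ is the Gram matrix of the overcomplete family $\{k_\lambda\}$. Your version applies the lemma to $\zeta\chi_{\Lambda_j}$ and then uses $\|A(\zeta\chi_{\Lambda_j})\|_{\ell^q}\le\|A\|\,\|\zeta\|_{\ell^p}$, which is exactly what the argument needs.
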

\begin{proof}
Assume that \(D: \ell^{p}(\Lambda)\to \ell^q(\Lambda)\) is bounded.
For \(\zeta\in\ell^p(\Lambda)\), let  \(\{\zeta^{(n)}\}\) be the truncated sequence of \(\zeta\).
By Lemma \ref{lem:AlessD}, there exists a positive constant \(C\) such that
\[
 \|A\zeta^{(n)}\|_{\ell^q(\Lambda)}
  \le C
  \|D\zeta^{(n)}\|_{\ell^q(\Lambda)}\le C  \|D\zeta\|_{\ell^q(\Lambda)}.
\]
By \eqref{eq:A-truncation}, we have
\[
 \|A\zeta\|_{\ell^q(\Lambda)}\le C  \|D\zeta\|_{\ell^q(\Lambda)}.
\]
Hence \(A: \ell^{p}(\Lambda)\to \ell^q(\Lambda)\) is bounded.

Assume that \(A: \ell^{p}(\Lambda)\to \ell^q(\Lambda)\) is bounded. For any finitely supported sequence \(\zeta\) on \(\Lambda\), by Lemma \ref{lem:DlessA}, there exists a positive constant \(c\) such that for
\begin{align*}
c^q\|D\zeta\|_{\ell^q(\Lambda)}^q=c^q\sum_{j=1}^N\|D\zeta\|_{\ell^q(\Lambda_j)}^q\le \sum_{j=1}^N\|P_j(A\zeta)\|_{\ell^q(\pi_j(\Lambda_j))}^q
\le N\|A\zeta\|_{\ell^q(\Lambda)}^q,\quad 0<q<\infty;
\end{align*}
\begin{align*}
c\|D\zeta\|_{\ell^\infty(\Lambda)}=c\max_{1\le j\le N}\|D\zeta\|_{\ell^\infty(\Lambda_j)}\le \max_{1\le j\le N}\|P_j(A\zeta)\|_{\ell^\infty(\pi_j(\Lambda_j))}
\le \|A\zeta\|_{\ell^\infty(\Lambda)}.
\end{align*}
Combining with \eqref{eq:D-truncation}, the boundedness of  \(D: \ell^{p}(\Lambda)\to \ell^q(\Lambda)\) follows.
\end{proof}

\section{Boundedness and Compactness}

In this section, we give the proof of the main results.

\subsection{The case of \(\gamma\neq 0\)}

\begin{lemma}\label{lem:necessary}
Assume \(\gamma\neq 0\).
If \(T: \F^p\rightarrow \F^q\) is bounded for \(0<p, q\le \infty\), then
\[|\alpha|<1/2,\qquad |\beta|<1/2.\]
\end{lemma}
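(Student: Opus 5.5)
The plan is to test $T$ on explicit Gaussians that lie in every $\F^p$, and to read off from the resulting Gaussian integrals that the image cannot lie in $\F^q$. Since $\F^q\subset\F^\infty$ continuously for every $0<q\le\infty$ (Lemma~\ref{lem:inclusion}), it suffices throughout to show that some image $Tf$ fails to be in $\F^\infty$, or that $\|Tf\|_\infty$ is unbounded while $\|f\|_p$ stays bounded. I will treat $\beta$ first, since $|\beta|<1/2$ is what makes $T$ act nicely on reproducing kernels, and then $\alpha$.

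\emph{Step 1: $|\beta|<1/2$.} Take $f(u)=\e^{cu^2}$ with $|c|<1/2$; by Corollary~\ref{cor:nonzero in fp}, $f\in\F^p$ for every $p$, and $\|f\|_p$ is finite for each fixed $c$. Boundedness requires in particular that $Tf(z)$ is defined, i.e. that
\[
u\mapsto \exp\bigl(\beta\bar u^2+cu^2+\gamma z\bar u+\delta\bar u-|u|^2\bigr)
\]
is integrable. The real quadratic form $-|u|^2+\Real((\bar\beta+c)u^2)$ is negative definite iff $|\bar\beta+c|<1$.

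If $|\beta|>1/2$, choose $c$ parallel to $\bar\beta$ with $|c|<1/2$ but $|\beta|+|c|\ge1$. Then the integral diverges, so $T$ is not even defined on $f$.

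If $|\beta|=1/2$, take $c=s\,\beta/|\beta|\cdot\overline{(\beta/|\beta|)}^{\,2}$, chosen so that $4\beta c=2s\in(0,1)$, and let $s\uparrow1/2$. The integral converges, and completing the square (the two-variable version of Lemma~\ref{lem:Gaussian integral}) gives
\[
Tf(z)=(1-4\beta c)^{-1/2}\exp\bigl(\alpha z^2+\eta z\bigr)\exp\Bigl(\frac{c(\gamma z+\delta)^2+\dots}{1-4\beta c}\Bigr).
\]
The coefficient of $z^2$ is $\alpha+\gamma^2c/(1-4\beta c)+O(1)$. Because $\gamma\ne0$, its modulus tends to $\infty$ as $s\to1/2$, so for $s$ close to $1/2$ it exceeds $1/2$. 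By Corollary~\ref{cor:nonzero in fp}(ii), $Tf\notin\F^\infty\supset\F^q$, which is a contradiction. The main obstacle is this computation: getting the exact Gaussian integral with both $u^2$ and $\bar u^2$ terms and confirming that the blow-up really lands in the $z^2$ coefficient. That is where $\gamma\neq0$ is used, and I would verify it by diagonalizing the $2\times2$ real form as in Lemma~\ref{lem:B-sum}.

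\emph{Step 2: $|\alpha|<1/2$.} Now that $|\beta|<1/2$ is known, Section~\ref{subsec:action} gives
\[
Tk_w(z)=\e^{-|w|^2/2}\e^{\beta\bar w^2+\delta\bar w}\exp\bigl(\alpha z^2+(\eta+\gamma\bar w)z\bigr),
\]
and $k_w\in\F^p$ with norm $1$. Hence $\exp(\alpha z^2+(\eta+\gamma\bar w)z)\in\F^\infty$, and Corollary~\ref{cor:nonzero in fp}(ii) yields $|\alpha|\le1/2$. To exclude $|\alpha|=1/2$, rotate $z=\e^{i\theta}\zeta$ so that $\alpha\e^{2i\theta}=1/2$. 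The weighted exponent becomes
\[
-|\zeta|^2/2+\Real(\zeta^2/2+c'\zeta)=-y^2+\Real(c'\zeta),
\]
where $c'=\e^{i\theta}(\eta+\gamma\bar w)$ and $\zeta=x+iy$. This is bounded in $\zeta$ only if $\Real c'=0$. Since $\gamma\neq0$, the map $w\mapsto c'$ is onto $\C$, so some $w$ gives $\Real c'\ne0$. For that $w$ the weighted exponent grows linearly along the real $\zeta$-axis, so $Tk_w\notin\F^\infty$, a contradiction. Therefore $|\alpha|<1/2$.
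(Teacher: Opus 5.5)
Your proof is correct. The $\alpha$ half is the paper's argument: test on $k_w$, apply Corollary~\ref{cor:nonzero in fp}(ii), and exclude $|\alpha|=1/2$ by using $\gamma\neq0$ to give $\eta+\gamma\bar w$ a nonzero real part after rotation. You write out the boundary case that the paper dismisses as ``the same argument''.

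The $\beta$ half takes a different route. The paper argues by duality. The estimate $|(Tf)(z)|\le \e^{|z|^2/2}\|T\|\|f\|_p$ makes pairing with $h_z=\overline{H(z,\cdot)}$ a bounded functional on $\F^p$. Lemma~\ref{lem:duality} (via some $\F^{p_1}$ when $p=\infty$) then puts $h_z$, a constant times $\exp(\bar\beta w^2+(\bar\gamma\bar z+\bar\delta)w)$, in $\F^\infty$. After that, $|\beta|<1/2$ follows exactly as for $\alpha$, now varying $z$ so that the linear coefficient sweeps $\C$. So the paper treats both parameters with one device.

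You instead test on $\e^{cu^2}$. For $|\beta|>1/2$ the defining integral diverges. At $|\beta|=1/2$ the $z^2$-coefficient $\alpha+\gamma^2c/(1-2s)$ blows up, so in your argument $\gamma\neq0$ enters through the quadratic term rather than the linear one.

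What your route buys: it avoids the duality theorem, the separate case $p=\infty$, and the identification of the representing element of $(\F^p)^*$ with $h_z$. It also uses only that $T$ maps $\F^p$ into $\F^q$, not the norm bound.

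Its cost is the two-sided Gaussian integral
\[
\frac1\pi\int_{\C}\e^{-|u|^2+\beta\bar u^2+cu^2+\xi\bar u}\,dA(u)=\frac{1}{\sqrt{1-4\beta c}}\exp\Bigl(\frac{c\,\xi^2}{1-4\beta c}\Bigr),\qquad \xi=\gamma z+\delta .
\]
This is not among the paper's lemmas and must be justified on the boundary $|\beta|=1/2$. One way is your diagonalization. Another is to note that it holds for $|\beta|,|c|<1/2$, where it is an $\F^2$ inner product of two Gaussians, and extends analytically in $\beta$ to the disc $|\bar\beta+c|<1$ of absolute convergence, on which $1-4\beta c\neq0$.

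One small point: the ``$+O(1)$'' in your coefficient is superfluous. With no $u$-linear term, the $z^2$-coefficient is exactly $\alpha+\gamma^2c/(1-4\beta c)$.
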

\begin{proof}
Since \(T: \F^p\rightarrow \F^q\) is bounded, for any \(f\in\F^p\),  \(Tf\in\F^q\). By Lemma \ref{lem:reproducing}, for any \(z\in\C\), we have
\[|\langle f, h_z\rangle|=|(Tf)(z)|\le e^{\frac{|z|^2}{2}}\|Tf\|_q\le e^{\frac{|z|^2}{2}}\|T\|\|f\|_p,\]
where \(h_z(w)=\overline{H(z, w)}.\)
Hence \(h_z\) induces a continuous functional on \(\F^p\).
When \(0<p<\infty\), by Lemma \ref{lem:duality}  and Lemma \ref{lem:inclusion},
\(h_z\in\F^{p'}\subset \F^\infty\);
when \(p=\infty\), by Lemma \ref{lem:inclusion}, \(h_z\) induces a continuous functional on \(\F^{p_1}\) for any \(0<p_1<\infty\), thus again by Lemma \ref{lem:duality} and Lemma \ref{lem:inclusion}, \(h_z\in\F^{p_1'}\subset \F^\infty\). Therefore, we always have
\[h_z(w)=\e^{\bar{\alpha}\bar{z}^2+\bar{\eta}\bar{z}}\exp\bigl(\bar{\beta}w^2+
(\bar{\gamma}\bar{z}+\bar{\delta})w\bigr)\in \F^\infty\]
for any \(z\in\C\).
By Corollary \ref{cor:nonzero in fp}, we have \(|\beta|\le \frac{1}{2}\). Since \(\gamma\neq 0\) and \(z\) is arbitrary, we must have
\(|\beta|<\frac{1}{2}.\)

On the other hand, since
\[Tk_w(z)=\e^{-\frac{|w|^2}{2}+\beta\bar{w}^2+\delta\bar{w}}\exp\bigl(\alpha z^2+(\eta+\gamma\bar{w})z\bigr)\in\F^{q}\subset \F^\infty\]
for any \(w\in\C\). The same argument
gives \(|\alpha|<1/2\).
\end{proof}

By Lemma \ref{lem:necessary}, when \(T: \F^p\rightarrow \F^q\) is bounded,  all the conclusions hold in subsection \ref{subsec:action} and section \ref{sec:passage}.

The following fact about diagonal operators on sequence spaces is well-known.

\begin{lemma}\label{lem:D-bounded}
Let \(0<p, q\le \infty\). Then \(D: \ell^p(\Lambda)\to \ell^q(\Lambda)\) is bounded if and only if \(\{r_\lambda\}_{\lambda\in\Lambda}\in\ell^\infty(\Lambda)\) when \(p\le q\), and \(\{r_\lambda\}_{\lambda\in\Lambda}\in\ell^\rho(\Lambda)\) when \(q< p\), where \(1/\rho=1/q-1/p\) and \(1/\infty=0\).
\end{lemma}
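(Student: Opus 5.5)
The proof reduces to testing $D$ on standard unit vectors and, in the case $q<p$, on one well-chosen sequence, so we treat the two regimes $p\le q$ and $q<p$ separately. Write $e_\lambda$ for the standard unit vector at $\lambda\in\Lambda$. Since $\|e_\lambda\|_{\ell^p}=1$ and $\|De_\lambda\|_{\ell^q}=r_\lambda$ for every $0<p,q\le\infty$, boundedness of $D$ forces $\sup_\lambda r_\lambda\le\|D\|$. This is the necessity half when $p\le q$.

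\emph{Case $p\le q$, sufficiency.} Assume $\{r_\lambda\}\in\ell^\infty(\Lambda)$. We use the contractive embedding $\ell^p(\Lambda)\subset\ell^q(\Lambda)$ for $p\le q$, i.e. $\|\zeta\|_{\ell^q}\le\|\zeta\|_{\ell^p}$; it follows from $|\zeta_\lambda|\le\|\zeta\|_{\ell^p}$ after normalizing. This gives
\[
\|D\zeta\|_{\ell^q}\le \sup_\lambda r_\lambda\,\|\zeta\|_{\ell^q}\le \sup_\lambda r_\lambda\,\|\zeta\|_{\ell^p}.
\]

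\emph{Case $q<p$, sufficiency.} Assume $\{r_\lambda\}\in\ell^\rho(\Lambda)$ with $1/\rho=1/q-1/p$. If $p=\infty$, then $\rho=q$ and
\[
\|D\zeta\|_{\ell^q}\le\|\zeta\|_{\ell^\infty}\|\{r_\lambda\}\|_{\ell^q}.
\]
If $p<\infty$, we apply H\"older's inequality with the exponents $p/q$ and $\rho/q$, which are conjugate because $q/p+q/\rho=1$:
\[
\sum_\lambda r_\lambda^q|\zeta_\lambda|^q\le \Bigl(\sum_\lambda |\zeta_\lambda|^p\Bigr)^{q/p}\Bigl(\sum_\lambda r_\lambda^\rho\Bigr)^{q/\rho}.
\]
Thus $\|D\|\le\|\{r_\lambda\}\|_{\ell^\rho}$.

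\emph{Case $q<p$, necessity.} This is the main step. Assume $D$ is bounded. Fix a finite set $F\subset\Lambda$. If $p=\infty$, test with $\zeta=\mathbf 1_F$, which has $\|\zeta\|_{\ell^\infty}\le 1$; this gives $\sum_{F}r_\lambda^q\le\|D\|^q$, and letting $F\uparrow\Lambda$ shows $\{r_\lambda\}\in\ell^q=\ell^\rho$. If $p<\infty$, take the extremal H\"older test sequence $\zeta_\lambda=r_\lambda^{\rho/p}$ for $\lambda\in F$ and $\zeta_\lambda=0$ otherwise. Then $\|\zeta\|_{\ell^p}^p=\sum_F r_\lambda^\rho$. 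Since $q+q\rho/p=\rho$, we also get $\|D\zeta\|_{\ell^q}^q=\sum_F r_\lambda^{q+q\rho/p}=\sum_F r_\lambda^{\rho}$. Set $S_F=\sum_F r_\lambda^\rho$; boundedness gives $S_F^{1/q}\le\|D\|S_F^{1/p}$, so $S_F^{1/\rho}\le\|D\|$ whenever $S_F>0$. This bound is uniform in $F$, hence $\{r_\lambda\}\in\ell^\rho(\Lambda)$. The only real obstacle is choosing this test sequence and checking the exponent identity $q+q\rho/p=\rho$. The quasi-norm range $q<1$ needs no extra care, since only H\"older's inequality with exponents $p/q,\rho/q\ge 1$ is used.
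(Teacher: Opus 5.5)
Your proof is correct and complete. The unit-vector test gives $\sup_\lambda r_\lambda\le\|D\|$, and the contractive embedding $\ell^p\subset\ell^q$ gives sufficiency for $p\le q$. For $q<p$, H\"older with the conjugate pair $p/q,\ \rho/q$ and the extremal test sequence $r_\lambda^{\rho/p}\mathbf 1_F$ (via $q+q\rho/p=\rho$) give both directions, and in fact $\|D\|=\|\{r_\lambda\}\|_{\ell^\rho}$.

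The paper gives no proof of this lemma and simply calls it well known, so your argument supplies the standard one. Its only implicit input is $r_\lambda>0$, which you need for $\|De_\lambda\|_{\ell^q}=r_\lambda$ and for $r_\lambda^{\rho/p}$ to make sense; this holds because $r_\lambda=\e^{\Theta_\alpha(\lambda)}$.
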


\begin{proof}[Proof of Theorem~\ref{thm:fp-fq}]
If \(T\) is bounded, then
\[\sup_{w\in\C}\|Tk_w\|_q\le\|T\|,\] since
\(\|k_w\|_p=1\) for \(w\in\C\). With Lemma \ref{lem:necessary}, Lemmas~\ref{lem:kernels} and~\ref{lem: B-infty} therefore
give condition (ii) for \(0<q<\infty\) and \(q=\infty\) respectively.

Conditions (ii) and (iii) are equivalent by
Lemma~\ref{lem:R-C-equivalence}.

Assume (ii). Then \(\{r_\lambda=\mathcal R(\lambda)\}_{\lambda\in\Lambda}\) is bounded and thus \(D:\ell^p(\Lambda)\to\ell^q(\Lambda)\) is bounded by Lemma \ref{lem:D-bounded}.
By Proposition~\ref{prop:A=D},
\(A:\ell^p(\Lambda)\to\ell^q(\Lambda)\) is bounded.

If \(p<\infty\),
Proposition~\ref{prop:passage} gives boundedness of \(T: \F^p\to \F^q\).

If \(p=\infty\), then necessarily \(q=\infty\). By condition (iii),
\(\mathcal C\) is bounded.  By Lemma \ref{lem: B-integral}, for any \(f\in\F^\infty\),
\[
 |Tf(z)|e^{-|z|^2/2}
 \le \|f\|_\infty\frac1\pi\int_{\C}|B(z,w)| d A(w)
 =2C_{\beta, 1}\|f\|_\infty\mathcal C(z),\qquad z\in\C.
\]
Thus
\[\|Tf\|_\infty\le 2C_{\beta, 1}\sup_{z\in\C}\mathcal C(z)\|f\|_\infty\]
and \(T\) is bounded on \(\F^\infty\).
\end{proof}

\begin{lemma}\label{lem:dilation}
Let \(0<s<1\). For every \(0<p\le\infty\), define \(D_s\) on
\(\F^p\) as
\[(D_sf)(z)=f(sz),\qquad f\in \F^p.\] Then \(D_s\) is compact on
\(\F^p\) . On \(\F^2\), its singular values are \(1,s,s^2,\ldots\).
\end{lemma}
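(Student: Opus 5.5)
The plan is to exploit the fact that \(D_s\) is a weighted-composition-type operator that is diagonal with respect to the monomial basis. We first show it is bounded on every \(\F^p\) with an explicit Gaussian gain in the norm. Compactness then follows by writing \(D_s\) as a limit of finite-rank operators, or by routing through a slightly smaller dilation.

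\textbf{Step 1: $\F^2$ and the singular values.} The normalized monomials \(e_n(z)=z^n/\sqrt{n!}\) form an orthonormal basis of \(\F^2\), and \(D_se_n=s^ne_n\). Hence \(D_s\) is a positive diagonal operator with eigenvalues \(s^n\to0\), so it is compact on \(\F^2\) and its singular values are \(1,s,s^2,\ldots\).

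\textbf{Step 2: a gain estimate for general \(p\).} Take \(s<t<1\) and \(f\in\F^p\). By Lemma~\ref{lem:reproducing},
\[
|f(tz)|\e^{-|z|^2/2}\le \|f\|_p\,\e^{-(1-t^2)|z|^2/2}.
\]
So \(D_t\) maps \(\F^p\) into the subspace of entire functions satisfying
\[
|g(z)|\le \|f\|_p\,\e^{t^2|z|^2/2},
\]
and this bound is uniform on the unit ball. In particular \(D_t:\F^p\to\F^\infty\) is bounded. Moreover, for any \(0<p_2\le\infty\), the function \(\e^{-(1-t^2)|z|^2/2}\) is \(p_2\)-integrable. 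Therefore \(D_t:\F^p\to\F^{p_2}\) is bounded for all \(p,p_2\).

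\textbf{Step 3: compactness.} Let \(\{f_n\}\) be bounded in \(\F^p\). By Lemma~\ref{lem:reproducing} it is locally uniformly bounded, so by Montel's theorem a subsequence converges locally uniformly to an entire \(f\) with \(|f(z)|\le C\e^{|z|^2/2}\). Set \(g_n=f_n-f\), so that \(g_n\to0\) locally uniformly with \(|g_n(z)|\le C\e^{|z|^2/2}\). Then
\[
|g_n(sz)|\e^{-|z|^2/2}\le C\e^{-(1-s^2)|z|^2/2}.
\]
Split the plane into \(|z|\le R\) and \(|z|>R\). On \(|z|>R\) the contribution is small uniformly in \(n\), since \(\e^{-(1-s^2)|z|^2/2}\) lies in \(L^p\) and tends to \(0\) at infinity, which also covers \(p=\infty\). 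On \(|z|\le R\) the contribution tends to \(0\) by local uniform convergence. Hence \(D_sf_n\to D_sf\) in \(\F^p\) (quasi-)norm. Here \(f\) need not lie in \(\F^p\) a priori, but \(D_sf\in\F^p\) by the same pointwise bound.

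\textbf{Main obstacle.} The only delicate point is that for \(p=\infty\), or \(p<1\), one cannot use weak compactness or duality. The Montel plus tail-splitting argument avoids both, since it needs only the pointwise bound from Lemma~\ref{lem:reproducing} and the fact that the Gaussian \(\e^{-(1-s^2)|z|^2/2}\) is in \(L^p\) and vanishes at infinity.
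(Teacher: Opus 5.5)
Your argument is correct and is essentially the paper's proof: Montel's theorem combined with the pointwise domination $|g_n(sz)|\e^{-|z|^2/2}\le C\e^{-(1-s^2)|z|^2/2}$ gives compactness, and the singular values come from $D_se_n=s^ne_n$; your tail-splitting at $|z|=R$ is just the dominated convergence step of the paper written out. The only differences are cosmetic. You bound the limit $f$ pointwise rather than first showing $f\in\F^p$ via Fatou's lemma, and your Step 2 is never actually used.
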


\begin{proof}
Let \(\{f_n\}\) be a bounded sequence in \(\F^p\). The continuous inclusion
\(\F^p\subset\F^\infty\) makes \(\{f_n\}\) locally bounded, so
Montel's Theorem provides a subsequence converging locally uniformly to an
entire function \(f\). For \(0<p<\infty\), by Fatou Lemma, we have
\[
\int_{\C}|f(z)\e^{-\frac{|z|^2}{2}}|^pd\nu_p(z)\le \limsup_{n\to\infty}\int_{\C}|f_n(z)\e^{-\frac{|z|^2}{2}}|^pd\nu_p(z)\le\sup_n\|f_n\|_p^p<\infty;
\]
for \(p=\infty\) and any \(z\in\C\),
\[
|f(z)\e^{-\frac{|z|^2}{2}}|\le \limsup_{n\to\infty}|f_n(z)\e^{-\frac{|z|^2}{2}}|\le \sup_n\|f_n\|_\infty<\infty.
\]
Therefore, for \(0<p\le\infty\), we always get \(f\in \F^p\) and \(\|f\|_p\le \sup_{n}\|f_n\|_p\).
Moreover, by Lemma \ref{lem:reproducing},
\[
\begin{aligned}
 |(D_sf_n-D_s f)(z)\e^{-|z|^2/2}|&=|f_n(sz)-f(sz)|e^{-|z|^2/2}
 \le \|f_n-f\|_p \e^{-(1-s^2)|z|^2/2}\\
 &\le 2^{\max\{1, 1/p\}}\sup_n\|f_n\|_p\e^{-(1-s^2)|z|^2/2} \quad (1/\infty=0).
 \end{aligned}
\]
Lebesgue dominated convergence theorem gives
\(D_sf_n\to D_sf\) in \(\F^p\) for \(0<p<\infty\) and local uniform convergence, \(\lim_{|z|\to\infty}\e^{-(1-s^2)|z|^2/2}=0\)   give \(D_sf_n\to D_sf\) in \(\F^\infty\).  Hence \(D_s\) is compact.

Since \(e_n(z)=z^n/\sqrt{n!}, n=0, 1, \dots, \) is an orthonormal basis of \(\F^2\) and
\(D_se_n=s^ne_n\), the singular-value statement of \(D_s\) follows.
\end{proof}

\begin{lemma}\label{lem:sufficient}
Assume \(\gamma\neq0\). Let \(0<p, q\le\infty\). If
\(|\alpha|<1/2, |\beta|<1/2\) and \(\mathcal R(w)\to 0\) as \(|w|\to\infty\),
then \(T: \F^p\to \F^q\) is bounded.
\end{lemma}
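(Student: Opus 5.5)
Under these hypotheses, $\mathcal R(w)=e^{\Theta_\alpha(w)}$ with $\Theta_\alpha$ of the form in Lemma~\ref{lem:quadratic-integrability}, namely $a=\frac{|\gamma|^2}{1-4|\alpha|^2}-1$, $b=N_\alpha$, $c=M_\alpha$. So $\mathcal R(w)\to0$ is equivalent to $a/2+|N_\alpha|<0$. By the same lemma, this gives $\{r_\lambda\}_{\lambda\in\Lambda}=\{\mathcal R(\lambda)\}\in\ell^t(\Lambda)$ for every $0<t<\infty$, and in particular $\{r_\lambda\}\in\ell^\infty(\Lambda)$. Lemma~\ref{lem:D-bounded} then shows that $D:\ell^p(\Lambda)\to\ell^q(\Lambda)$ is bounded for all $0<p,q\le\infty$. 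For $p\le q$ this uses boundedness of $\{r_\lambda\}$. For $q<p$ it uses $\{r_\lambda\}\in\ell^\rho$ with $1/\rho=1/q-1/p$; when $p=\infty$ we have $\rho=q<\infty$, which is still covered. Proposition~\ref{prop:A=D} then gives boundedness of $A:\ell^p(\Lambda)\to\ell^q(\Lambda)$.

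If $p<\infty$, Proposition~\ref{prop:passage} converts this directly into boundedness of $T:\F^p\to\F^q$, and we are done. The remaining case $p=\infty$ is the real obstacle, since Proposition~\ref{prop:passage} was only proved for $p<\infty$: finitely supported sequences are not dense in $\ell^\infty$, and the atomic series need not converge in norm.

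For $p=\infty$ I would avoid the lattice and estimate directly, as in the proof of Theorem~\ref{thm:fp-fq}. By Lemma~\ref{lem:R-C-equivalence} (using $\gamma\ne0$), $\mathcal C(z)\to0$ as $|z|\to\infty$. Lemma~\ref{lem:quadratic-integrability} then gives $\mathcal C\in L^t(\C,dA)$ for every $0<t<\infty$, and $\mathcal C$ is bounded. For $f\in\F^\infty$, Lemma~\ref{lem: B-integral} yields
\[
|Tf(z)|e^{-|z|^2/2}\le \|f\|_\infty\frac1\pi\int_{\C}|B(z,w)|\,dA(w)=2C_{\beta,1}\|f\|_\infty\,\mathcal C(z).
\]
Raising this to the power $q$ and integrating gives $\|Tf\|_q\le C\|\mathcal C\|_{L^q}\|f\|_\infty$ for $q<\infty$. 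Taking the supremum instead gives the $q=\infty$ case. Here $Tf$ is entire because the integral converges locally uniformly in $z$, as $|\beta|<1/2$ (Corollary~\ref{cor:functional}). Hence $T:\F^\infty\to\F^q$ is bounded for every $q$.

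An alternative route for $p=\infty$ that also avoids the density issue is to factor through inclusions. For $q<\infty$, however, one cannot simply pass through a smaller $p$, since $\F^\infty$ is not contained in $\F^{p}$ for $p<\infty$. So the direct integral estimate above is the cleanest, and it is the step I would present carefully. The only check needed is that $Tf$ is entire, so that the pointwise bound indeed controls the Fock norm.
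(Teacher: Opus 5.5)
Your proposal is correct and follows the paper's proof essentially step by step. For $p<\infty$ you use the diagonal criterion (Lemma~\ref{lem:D-bounded}), Proposition~\ref{prop:A=D} and Proposition~\ref{prop:passage}; for $p=\infty$ you use the direct pointwise bound $|Tf(z)|e^{-|z|^2/2}\le 2C_{\beta,1}\|f\|_\infty\,\mathcal C(z)$, with $\mathcal C\in L^q$ obtained from Lemmas~\ref{lem:R-C-equivalence} and~\ref{lem:quadratic-integrability}. Your added check that $Tf$ is entire is one the paper leaves implicit.
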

\begin{proof}
Let \(r_\lambda=\mathcal R(\lambda), \lambda\in\Lambda\). Then \(r\lambda\in\ell^\infty(\Lambda)\)and by Lemma \ref{lem:quadratic-integrability}, \(\{r_\lambda\}_{\lambda\in\Lambda}\in \ell^t(\Lambda)\) for any \(0<t<\infty\). Then for any  \(0<p, q\le\infty\), \(D: \ell^p(\Lambda)\to \ell^q(\Lambda)\) is bounded. By Proposition \ref{prop:A=D}, \(A: \ell^p(\Lambda)\to \ell^q(\Lambda)\) is bounded. When \(0<p<\infty\), we obtain that \(T: \F^p\to \F^q\) is bounded by Proposition \ref{prop:passage}.

Assume that \(p=\infty\). By Lemma \ref{lem:R-C-equivalence}, \(\mathcal C(z)\to 0\) as \(|z|\to\infty\). Obviously, \(\mathcal C\in L^\infty(\C, dA)\).  By Lemma
\ref{lem:quadratic-integrability}, \(\mathcal C\in L^t(\C, dA)\) for any \(0<t<\infty\). For any \(f\in\F^\infty\),
\[
 |Tf(z)|e^{-|z|^2/2}
 \le \|f\|_\infty\frac1\pi\int_{\C}|B(z,w)| d A(w)
 =2C_{\beta, 1}\|f\|_\infty\mathcal C(z),\qquad z\in\C.
\]
Hence, for any \(0<q\le \infty\),
\[
\|Tf\|_q\le 2C_{\beta, 1}C_q\|\mathcal C\|_{L^q(\C, dA)}\|f\|_\infty,
\]
where \(C_q=\bigl(\frac{q}{2\pi}\bigr)^{1/q}\) when \(0<q<\infty\), \(C_q=1\) when \(q=\infty\).
Thus \(T: \F^p\to \F^q\) is bounded.
\end{proof}

\begin{proposition}\label{prop:fp-fq-compact}
Assume \(\gamma\neq0\). Let \(0<p, q\le\infty\). Then the following are
equivalent:
\begin{enumerate}
\item[\textup{(i)}]  \(T:\F^p\to\F^q\) is compact;
\item[\textup{(ii)}]  \(|\alpha|<1/2,\quad  |\beta|<1/2\) and \(\mathcal R(w)\to 0\) as \(|w|\to\infty\);
\item[\textup{(iii)}]  \(|\alpha|<1/2,\quad  |\beta|<1/2\) and \(\mathcal C(z)\to 0\) as \(|z|\to\infty\).
\end{enumerate}
Equivalently, the strict inequalities \eqref{eq:strict-inequality} displayed in
Theorem~\ref{thm:fp-fq compact} hold.
\end{proposition}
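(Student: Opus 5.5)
We need to prove Proposition \ref{prop:fp-fq-compact}: compactness of \(T:\F^p\to\F^q\) is equivalent to (ii), to (iii), and to the strict inequalities. The equivalence (ii)\(\Leftrightarrow\)(iii) is already contained in Lemma~\ref{lem:R-C-equivalence}. The equivalence of (ii) with the first line of \eqref{eq:strict-inequality} follows from Lemma~\ref{lem:quadratic-integrability} applied to \(F=\mathcal R\). There we take \(a=\frac{|\gamma|^2}{1-4|\alpha|^2}-1\) and \(b=N_\alpha\), so condition (v) reads \(\frac{|\gamma|^2}{1-4|\alpha|^2}+2|N_\alpha|<1\). The same lemma applied to \(\mathcal C\) gives the second line. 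So the real work is (i)\(\Leftrightarrow\)(ii).

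\emph{(i)\(\Rightarrow\)(ii).} Compactness implies boundedness, so Lemma~\ref{lem:necessary} gives \(|\alpha|,|\beta|<1/2\). I will test \(T\) on the normalized kernels \(k_w\).

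For \(p>1\) (including \(p=\infty\), where I can restrict to the closure of polynomials, or argue directly), \(k_w\to0\) weakly, or uniformly on compacta while staying bounded, as \(|w|\to\infty\). A compact operator should then send \(k_w\) to something tending to \(0\) in norm. Lemmas~\ref{lem:kernels} and~\ref{lem: B-infty} then give \(\mathcal R(w)\asymp\|Tk_w\|_q\to0\).

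For \(p\le1\) weak convergence is unavailable, so I argue directly. Suppose \(\mathcal R(w_n)\ge\varepsilon\) with \(|w_n|\to\infty\). By compactness, some subsequence \(Tk_{w_n}\) converges in \(\F^q\) to some \(g\). Since \(Tk_{w_n}(z)=H(z,w_n)e^{-|w_n|^2/2}\), we have pointwise
\[
|B(z,w_n)|=c_\alpha\mathcal R(w_n)\exp\bigl(-\tfrac12(z-L_\alpha w_n-d_\alpha)^{\mathsf T}H_\alpha(z-L_\alpha w_n-d_\alpha)\bigr).
\]
Here \(\mathcal R(w_n)\le\sup\mathcal R<\infty\) and the Gaussian factor tends to \(0\) for fixed \(z\), because \(L_\alpha\) is invertible (this is where \(\gamma\neq0\) enters). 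Hence \(g\equiv0\). On the other hand \(\|Tk_{w_n}\|_q\gtrsim\varepsilon\), a contradiction. This argument actually works uniformly for all \(p\), so I will use it throughout and avoid duality.

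\emph{(ii)\(\Rightarrow\)(i).} This is the main step. First, Lemma~\ref{lem:sufficient} gives boundedness of \(T\) from \(\F^{p_1}\) to \(\F^{q_1}\) for every pair of exponents. I then factor \(T\) through a dilation, \(T=T_s D_{s}\) or \(T=D_sT'\) with \(s<1\) close to \(1\). Changing variables \(w\mapsto w/s\) in the integral shows that \(D_s\) composed with a Gaussian operator is again a Gaussian operator with modified parameters. For instance \((TD_s^{-1})\) corresponds to \(\gamma\mapsto\gamma/s\) after accounting for the weight change, which perturbs the weight \(e^{-|w|^2}\) to \(e^{-|w|^2/s^2}\); this weight must be renormalized, which shifts \(\beta,\gamma,\delta\).

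Since the strict inequality \eqref{eq:strict-inequality} is an open condition in the parameters, the modified operator still satisfies (ii) when \(s\) is close to \(1\). Lemma~\ref{lem:sufficient} then makes it bounded, and compactness of \(D_s\) (Lemma~\ref{lem:dilation}) makes \(T\) compact.

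The obstacle is carrying out this reparametrization cleanly. It is easier to write \(T=D_s\widetilde T\) with \((\widetilde Tf)(z)=(Tf)(z/s)\). Then \(\widetilde T\) is the Gaussian operator with parameters \((\alpha/s^2,\beta,\gamma/s,\eta/s,\delta)\), and only \(\alpha\) and \(\gamma\) move. I then check that \(|\alpha|/s^2<1/2\) and that the first line of \eqref{eq:strict-inequality} holds for \(s\) near \(1\), by continuity of \(N_\alpha\) and \(|\gamma|^2/(1-4|\alpha|^2)\).

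A fallback is to work on the sequence side, using that \(D\) with \(r_\lambda\to0\) is a norm limit of finite-rank truncations. Truncating \(\zeta\) to \(|\lambda|\le M\) and bounding the tail via Lemma~\ref{lem:AlessD} gives \(\|R_qTS_p-R_qTS_pP_M\|\lesssim\sup_{|\lambda|>M}r_\lambda\to0\). Hence \(TS_p\) is a limit of finite-rank operators, so it is compact, and therefore \(T\) is compact because \(S_p\) is onto with controlled preimages (for \(p<\infty\)).
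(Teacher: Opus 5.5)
Your proof is correct. For (ii)$\Rightarrow$(i) you do exactly what the paper does. You write $T=D_sT_s$, where $T_s$ is the Gaussian operator with parameters $(\alpha/s^2,\beta,\gamma/s,\eta/s,\delta)$. You use openness of the strict inequality to get $T_s:\F^p\to\F^q$ bounded via Lemma~\ref{lem:sufficient}, and conclude from compactness of $D_s$ on $\F^q$.

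For (i)$\Rightarrow$(ii) your route is different. The paper picks lattice points $\lambda_n\to\infty$ with $r_{\lambda_n}\ge\varepsilon_0$, all lying in one sublattice $\Lambda_j$. It then combines the sampling inequality with the lower bound of Lemma~\ref{lem:DlessA} to get $\|Tk_{\lambda_n}-Tk_{\lambda_m}\|_q\ge c\,\varepsilon_0$, so $\{Tk_{\lambda_n}\}$ has no Cauchy subsequence.

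You instead take arbitrary $w_n\to\infty$ with $\mathcal R(w_n)\ge\varepsilon$ and extract a norm-convergent subsequence of $Tk_{w_n}$. Its limit must be $0$: the Gaussian factor in \eqref{eq:row} is centered at $L_\alpha w_n+d_\alpha\to\infty$, since $L_\alpha$ is invertible when $\gamma\ne0$, and point evaluations are continuous on $\F^q$. This contradicts $\|Tk_{w_n}\|_q\asymp\mathcal R(w_n)\ge\varepsilon$ from Lemmas~\ref{lem:kernels} and~\ref{lem: B-infty}.

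Your version is more elementary. It needs neither the reduction to lattice points via Lemma~\ref{lem:quadratic-integrability} nor the sublattice decomposition, and it works uniformly for all $0<p,q\le\infty$ without duality. The paper's version has the merit of reusing the matrix machinery already built for the boundedness theorem.

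Your sequence-side fallback is not needed. As written it has two defects. Its tail bound $\sup_{|\lambda|>M}r_\lambda$ is only valid for $p\le q$; for $q<p$ you would need the $\ell^\rho$-tail of $\{r_\lambda\}$. It also does not cover $p=\infty$.
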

\begin{proof}
Suppose \(T\) is compact. Then \(T\) is bounded. By Lemma \ref{lem:necessary}, we have \(|\alpha|<1/2, |\beta|<1/2\). If \(\lim_{|w|\to\infty}\mathcal R(w)\neq 0\), then by
Lemma~\ref{lem:quadratic-integrability}, there would be a positive constant \(\varepsilon_0\) and a lattice sequence
\(\lambda_n\to\infty\) for which \(r_{\lambda_n}\ge\varepsilon_0\).
Without loss of generality,
we may assume that all \(\lambda_n\) lie in \(\Lambda_j\) for some \(1\le j\le N\). For \(n\ne m\), let
\[\zeta=\{\zeta_\lambda\}: \zeta_{\lambda_n}=1, \zeta_{\lambda_m}=-1\quad \text{and}\quad \zeta_\lambda=0 \quad \text{for}\quad \lambda\neq \lambda_n, \lambda_m,\]
then by Remark \ref{rem:SpRp}, Lemma \ref{lem:dense} and \eqref{eq:RTS=A},
\[
 \|Tk_{\lambda_n}-Tk_{\lambda_m}\|_q\ge C_{r, q} \|R_q\bigl(T(k_{\lambda_n}-k_{\lambda_m})\bigr)\|_{\ell^q(\Lambda)}=C_{r, q} \|R_qTS_p\zeta\|_{\ell^q(\Lambda)}=C_{r, q} \|A\zeta\|_{\ell^q(\Lambda)},\]
 by Lemma \ref{lem:DlessA},
\[ \|A\zeta\|_{\ell^q(\Lambda)}\ge c\|\{r_\lambda \zeta_\lambda\}\|_{\ell^q(\Lambda_j)}\ge c|r_{\lambda_n}|
 \ge c\varepsilon_0.
\]
Thus \(\{Tk_{\lambda_n}\}\subset\F^q\) has no Cauchy
subsequence, contradicting the compactness of \(T\). Hence (ii) holds.

Conditions (ii) and (iii) are equivalent by
Lemma~\ref{lem:R-C-equivalence}.
Lemma~\ref{lem:quadratic-integrability} converts the condition (ii) and (iii) into the
strict inequalities \eqref{eq:strict-inequality}.

Conversely, assume that (ii) holds.  Choose \(0<s<1\), sufficiently
close to \(1\), such that
\begin{align*}
\frac{|\alpha|}{s^2}<\frac12,\qquad  \frac{|\gamma/s|^2}{1-4|\alpha/s^2|^2}
 +2\left|\bar\beta+
 \frac{\alpha\bar\gamma^2/s^4}{1-4|\alpha/s^2|^2}\right|<1.
\end{align*}
Define the integral operator \(T_s\) as
\begin{align*}
(T_{s}f)(z)=\frac{1}{\pi}\int_\C
  \exp\bigl(\frac{\alpha}{s^2}z^2+\beta\overline w^2
  +\frac{\gamma}{s}z\overline w+\frac{\eta}{s}z
  +\delta\overline w\bigr)
  f(w)\e^{-|w|^2}d A(w).
\end{align*}
Then by Lemma~\ref{lem:sufficient},
\(T_{s}: \F^p\to\F^q\) is bounded.

For every \(f\in\F^p\) and every \(z\in\C\),
\begin{align*}
  &(D_s T_{s}f)(z)=(T_{s}f)(sz)\\
  &=\frac{1}{\pi}\int_\C
  \exp\Bigl(\frac{\alpha}{s^2}(sz)^2+\beta\overline w^{\,2}
  +\frac{\gamma}{s}(sz)\overline w+\frac{\eta}{s}(sz)
  +\delta\overline w\Bigr)
  f(w)\,\e^{-|w|^2}d A(w)\\
  &=(Tf)(z),
\end{align*}
hence
\(
 T=D_sT_{s}.
\)
Since \(T_{s}:\F^p\to\F^q\) is bounded and \(D_s\) is compact on
\(\F^q\) by Lemma~\ref{lem:dilation}, \(T\) is compact.
\end{proof}

Theorem~\ref{thm:fp-fq compact} follows from Proposition \ref{prop:fp-fq-compact}.

\begin{proof}[Proof of Theorem \ref{thm:fp-fq-compact-q-less-p}]
(i) implies (ii).

Assume that \(T:\F^p\to\F^q\) is bounded. By Lemma \ref{lem:necessary}, we have \(|\alpha|<1/2, |\beta|<1/2\).
If \(0<p<\infty\), then by  Proposition  \ref{prop:passage}, \(A: \ell^p(\Lambda)\to \ell^q(\Lambda)\) is bounded.
Thus \(D: \ell^p(\Lambda)\to \ell^q(\Lambda)\) is bounded
by Proposition  \ref{prop:A=D}.
 Therefore, by Lemma \ref{lem:D-bounded},
\(
 \{r_\lambda\}\in\ell^\rho(\Lambda)\),
with
 \(\frac1\rho=\frac1q-\frac1p.
\)
Since
\(r_\lambda=e^{\Theta_\alpha(\lambda)}\),
Lemma~\ref{lem:quadratic-integrability} implies condition (iii).
If \(p=\infty\), then by Lemma \ref{lem:inclusion}, for any \(0<p_1<\infty\), \(T:\F^{p_1}\to\F^q\) is bounded. Choose \(q<p_1<\infty\), then condition (iii) follows as in the case \(0<p<\infty\).

Conditions (iii) and (iv) are equivalent by
Lemma~\ref{lem:R-C-equivalence}. Lemma~\ref{lem:quadratic-integrability} converts condition (iii) and (iv) into the
strict inequalities \eqref{eq:strict-inequality}.

By Proposition \ref{prop:fp-fq-compact}, the equivalence of (iii)  and (i) follows.
\end{proof}

\subsection{The case of \(\gamma=0\)}

Assume that \(\gamma=0\). As in the introduction, we define
\[
  P(z) = \exp(\alpha z^2+\eta z),\qquad z\in\C\quad
\text{and}\quad
  Q(w)=\exp(\bar\beta w^2+\bar\delta w),\qquad w\in\C.
\]

\begin{proof}[Proof of Theorem \ref{thm:gamma=0}]
Since \(\gamma=0\),
\[H(z, w)=P(z)\overline{Q(w)}\]
and then
\begin{align*}
(Tf)(z)=&\frac1\pi\int_{\C}H(z,w)f(w)e^{-|w|^2}dA(w)\\
=&\frac{1}{\pi}\int_{\C}P(z)\overline{Q(w)}f(w)e^{-|w|^2}dA(w)=\langle f, Q\rangle P(z).\end{align*}
Therefore, \(T: \F^p\to \F^q\) is bounded if and only if \(P\in\F^q\) and \(Q\) induces a continuous functional on  \(\F^p\).

When \(0<p<\infty\), it follows from  Lemma \ref{lem:duality} that \(Q\) induces a bounded functional on  \(\F^p\) if and only if \(Q\in\F^{p'}\).
If \(0<p\le 1\), then \(p'=\infty\). If \(1<p<\infty\), then \(1<p'<\infty\), by Corollary \ref{cor:nonzero in fp}, \(Q\in\F^{p'}\) if and only if \(|\beta|<1/2\).

If \(p=\infty\) and \(Q\) induces a bounded functional on \(\F^\infty\), then by Lemma \ref{lem:inclusion},  \(Q\) induces a bounded functional on \(\F^{p_1}\) for any \(0<p_1<\infty\). Choose \(p_1>1\), then by the case \(1<p<\infty\), we obtain \(|\beta|<1/2\). On the other hand, if \(|\beta|<1/2\), then \(Q\in\F^1\), by Lemma \ref{lem:duality}, it is easy to verify that \(Q\) induces a bounded functional on \(\F^\infty\). Therefore, \(Q\) induces a bounded functional on \(\F^\infty\) if and only if \(|\beta|<1/2\).

Therefore, the equivalence of (ii) and (iii) follows.

Since
\(T=P\otimes Q,\)
thus, for all \(0<p,q\le\infty\),  if \(T: \F^p\to \F^q\) is bounded, then \(T\) is rank-one operator.  Hence the equivalence of (i) and (ii) is obvious.
\end{proof}

\section{Schatten class on $\F^2$}

In this section, we will show that if the integral operator \(T\) is compact on \(\F^2\), then \(T\in S_p(\F^2)\)  for every \(0<p<\infty\), where \(S_p(\F^2)\) denotes the Schatten \(p\)-class of \(\F^2\).

\begin{proposition}\label{prop:S2}
Assume \(|\alpha|<1/2\) and \(|\beta|<1/2\).
If
\[
 \frac{|\gamma|^2}{1-4|\beta|^2}
 +2\left|\alpha+
 \frac{\bar\beta\gamma^2}{1-4|\beta|^2}\right|<1,
\]
then
\(T\in S_2(\F^2)\), and
\[
  \|T\|_{S_2(\F^2)}^2
  =\frac{C_{\beta, 2}^2}{\sqrt{A_\beta^2-4|N_\beta|^2}}\,
   \exp\left(
   \frac{-A_\beta|M_\beta|^2
         +\overline{N_\beta}\,M_\beta^2
         +N_\beta\,\overline{M_\beta}^{\,2}}
        {A_\beta^2-4|N_\beta|^2}
   \right),
\]
where \(A_\beta=|\gamma|^2/(1-4|\beta|^2)-1\), and \(N_\beta, M_\beta\) as in \eqref{eq:NbetaMbeta}, \(C_{\beta, 2}\) as in Lemma \ref{lem: B-integral}.

\end{proposition}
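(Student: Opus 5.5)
The plan is to compute the Hilbert--Schmidt norm as the double integral of the kernel, reduce it to the column integral of Lemma~\ref{lem: B-integral}, and evaluate the resulting Gaussian integral in closed form. On $\F^2$ the operator $T$ has kernel $H(z,w)$ with respect to the measure $\frac1\pi e^{-|w|^2}dA(w)$. Equivalently, under the unitary map $f\mapsto f(z)e^{-|z|^2/2}$ from $\F^2$ onto a closed subspace of $L^2(\C,\frac{1}{\pi}dA)$, the operator $T$ becomes the integral operator with kernel $B(z,w)$ restricted to that subspace. Its Hilbert--Schmidt norm is therefore bounded by, and in fact equal to, the $L^2$ norm of the kernel:
\[
\|T\|_{S_2}^2=\frac1{\pi^2}\int_{\C}\int_{\C}|B(z,w)|^2\,dA(w)\,dA(z).
\]
Equality holds because $B(z,\cdot)e^{|w|^2/2}$ is antiholomorphic in $w$, so it lies in the range of the projection in that variable, and the same is true in $z$.

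To justify this cleanly, I would instead use $\|T\|_{S_2}^2=\sum_n\|Te_n\|^2$ for the orthonormal basis $e_n(w)=w^n/\sqrt{n!}$. Parseval gives $\sum_n|\langle h_z,e_n\rangle|^2=\|h_z\|^2$, where $h_z=\overline{H(z,\cdot)}\in\F^2$ because $|\beta|<1/2$. Since $(Te_n)(z)=\langle e_n,h_z\rangle$, Tonelli's theorem gives
\[
\sum_n\|Te_n\|_2^2=\frac1\pi\int_\C \|h_z\|_2^2 e^{-|z|^2}\,dA(z).
\]
Next, $\|h_z\|_2^2 e^{-|z|^2}=\frac1\pi\int|B(z,w)|^2 dA(w)=C_{\beta,2}^2\,\mathcal C(z)^2$ by Lemma~\ref{lem: B-integral} with $p=2$, since $d\nu_2=\frac1\pi dA$. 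Hence
\[
\|T\|_{S_2}^2=C_{\beta,2}^2\,\frac1\pi\int_\C e^{2\Theta_\beta(z)}\,dA(z).
\]

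It remains to evaluate this integral. Write $2\Theta_\beta(z)=A_\beta|z|^2+2\Real(N_\beta z^2+M_\beta z)$. The hypothesis says exactly $A_\beta/2+|N_\beta|<0$, so Lemma~\ref{lem:quadratic-integrability} gives finiteness. For the exact value, write the exponent in real coordinates as $-\frac12\mathrm{x}^{\mathsf T}Q\mathrm{x}+b^{\mathsf T}\mathrm{x}$. Here $Q$ is the $2\times2$ matrix with entries determined by $-A_\beta$ and $N_\beta$, so that $\det Q=4(A_\beta^2-4|N_\beta|^2)$, and $b$ comes from $M_\beta$. The standard formula
\[
\int_{\R^2}e^{-\frac12\mathrm x^{\mathsf T}Q\mathrm x+b^{\mathsf T}\mathrm x}\,d\mathrm x=\frac{2\pi}{\sqrt{\det Q}}\,e^{\frac12 b^{\mathsf T}Q^{-1}b}
\]
then yields the prefactor $1/\sqrt{A_\beta^2-4|N_\beta|^2}$. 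The main obstacle is routine but error-prone: one must check that $\frac12 b^{\mathsf T}Q^{-1}b$ simplifies to
\[
\frac{-A_\beta|M_\beta|^2+\overline{N_\beta}M_\beta^2+N_\beta\overline{M_\beta}^2}{A_\beta^2-4|N_\beta|^2}.
\]
I would do this in complex form. Complete the square using the Wirtinger variables $z,\bar z$ and solve the stationarity equations
\[
A_\beta\bar z+2N_\beta z+M_\beta=0,\qquad A_\beta z+2\overline{N_\beta}\bar z+\overline{M_\beta}=0
\]
for the critical point. Evaluating the exponent there gives the displayed expression, which I would verify against the case $N_\beta=0$, where the value is $-|M_\beta|^2/A_\beta$.
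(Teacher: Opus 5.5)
Your proposal is correct and follows essentially the same route as the paper: Parseval over the basis $e_n$ with $(Te_n)(z)=\langle e_n,h_z\rangle$, then integration in $z$, then Lemma~\ref{lem: B-integral} with $p=2$ to reduce to $C_{\beta,2}^2\,\frac1\pi\int_\C\mathcal C^2\,dA$, then a closed-form Gaussian evaluation. The only difference is that you evaluate the last integral directly via the real Gaussian formula and the critical-point equations, which do give $\frac{2\Real(\overline{N_\beta}M_\beta^2)-A_\beta|M_\beta|^2}{A_\beta^2-4|N_\beta|^2}$; the paper instead cites Lemma~\ref{lem:Gaussian integral}, implicitly after rescaling $z$ by $\sqrt{-A_\beta}$.
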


\begin{proof}
Let \(\{e_n\}\), \(e_n(z)=\sqrt{1/n!}z^n\), be the standard
orthonormal basis of \(\F^2\). For every \(z\in\C\),
\[
  (Tf)(z)=\ip{f}{h_z},
\]
where
\(h_z(w)=\overline{H(z, w)}=\e^{\overline\alpha\,\overline z^{\,2}+\overline\eta\,\overline z}\,
  \e^{\overline\beta\,w^2+(\overline\gamma\,\overline z+\overline\delta)w}\), and
\(h_z\in\F^2\) since \(|\overline\beta|<1/2\). By Parseval's identity,
\[
  \sum_{n\ge0}|Te_n(z)|^2
  =\sum_{n\ge0}\big|\ip{e_n}{h_z}\big|^2
  =\|h_z\|_2^2 ,
\]
and therefore,
\begin{equation}\label{eq:S2norm}
  \sum_{n\ge0}\|Te_n\|_2^2
  =\frac1\pi\int_\C\|h_z\|_2^2\,\e^{-|z|^2}d A(z).
\end{equation}
By Lemma \ref{lem: B-integral},
\begin{align*}
\|h_z\|_2^2=&\frac1\pi\int_{\C}|h_z(w)e^{-\frac{|w|^2}{2}}|^2dA(w)=\frac1\pi\int_{\C}|H(z,w)e^{-\frac{|w|^2}{2}}|^2dA(w)\\
=&\frac1\pi\int_{\C}|B(z,w)e^{\frac{|z|^2}{2}}|^2dA(w)=e^{|z|^2}\frac1\pi\int_{\C}|B(z,w)|^2dA(w)\\
=& C_{\beta, 2}^2e^{|z|^2}\mathcal C^2(z).
\end{align*}
Hence \eqref{eq:S2norm} becomes
\[
  \sum_{n\ge0}\|Te_n\|_2^2
  =\frac{C_{\beta, 2}^2}{\pi}
   \int_\C\mathcal C^2(z)d A(z).
\]
Since
\[\mathcal C^2(z)=\e^{2\Theta_\beta(z)}=\exp\bigl(A_\beta|z|^2+2\Real(N_\beta z^2+M_\beta z)\bigr),\qquad z\in\C,\] here \(A_\beta=|\gamma|^2/(1-4|\beta|^2)-1\),
applying Lemma~\ref{lem:Gaussian integral}, we obtain
\[
  \frac1\pi\int_\C\mathcal C^2(z)d A(z)
  =\frac{1}{\sqrt{A_\beta^2-4|N_\beta|^2}}\,
  \exp\left(
  \frac{-A_\beta|M_\beta|^2+2\Real(\overline{N_\beta}\,M_\beta^2)}{A_\beta^2-4|N_\beta|^2}\right)<\infty .
\]
Thus \(\sum_n\|Te_n\|_2^2<\infty\), so \(T\) is a Hilbert--Schmidt operator and
\[
  \|T\|_{S_2(\F^2)}^2
  =\frac{C_{\beta, 2}^2}{\sqrt{A_\beta^2-4|N_\beta|^2}}\,
   \exp\left(
   \frac{-A_\beta|M_\beta|^2
         +\overline{N_\beta}\,M_\beta^2
         +N_\beta\,\overline{M_\beta}^{\,2}}
        {A_\beta^2-4|N_\beta|^2}
   \right).\qedhere
\]
\end{proof}

\begin{theorem}\label{thm:schatten}
The following statements are equivalent.
\begin{enumerate}
\item[{{\rm (i)}}] \(T: \F^2\to\F^2\) is compact.
\item[{{\rm (ii)}}] \(T\in S_p(\F^2)\) for every \(0<p<\infty\).
\end{enumerate}
\end{theorem}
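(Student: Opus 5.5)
The implication (ii)$\Rightarrow$(i) is immediate, since every Schatten class operator is compact. The work is in (i)$\Rightarrow$(ii), and I would treat two cases.

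If $\gamma=0$, Theorem~\ref{thm:gamma=0} shows that a compact $T$ equals $P\otimes Q$, which has rank at most one and so lies in every $S_p(\F^2)$.

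If $\gamma\ne0$, Proposition~\ref{prop:fp-fq-compact} (with $p=q=2$) gives $|\alpha|<1/2$, $|\beta|<1/2$ and the strict inequalities \eqref{eq:strict-inequality}. The plan is to reuse the factorization from the proof of Proposition~\ref{prop:fp-fq-compact}. Fix $0<s<1$ close to $1$ so that the rescaled operator $T_s$ (parameters $\alpha/s^2,\beta,\gamma/s,\eta/s,\delta$) still satisfies $|\alpha/s^2|<1/2$ and the strict inequality. Then Lemma~\ref{lem:sufficient} shows $T_s$ is bounded on $\F^2$, and $T=D_sT_s$.

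By Lemma~\ref{lem:dilation}, $D_s$ is diagonal in the basis $e_n$ with singular values $s^n$. Since $\sum_n s^{np}<\infty$ for every $p>0$, we get $D_s\in S_p(\F^2)$ for all $0<p<\infty$. Because $S_p$ is an ideal in the bounded operators, with $\|XY\|_{S_p}\le\|X\|_{S_p}\|Y\|$ also valid for $0<p<1$ via the singular-value inequality $s_n(XY)\le s_n(X)\|Y\|$, it follows that $T=D_sT_s\in S_p(\F^2)$ for every $p$.

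The only delicate point is choosing $s$. The strict inequality of \eqref{eq:strict-inequality} for $T_s$ is an open condition whose left side depends continuously on $s$ near $s=1$, and at $s=1$ it reduces to the assumed inequality for $T$. So such an $s$ exists, exactly as in the proof of Proposition~\ref{prop:fp-fq-compact}.

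As a cross-check, one could instead iterate the factorization: write $T=D_{s}^{k}T_{s^k}$ and use Proposition~\ref{prop:S2} for $T_{s^k}\in S_2$, which gives $T\in S_{2/(k+1)}$ via Hölder for Schatten classes. The direct ideal argument above is simpler, however.
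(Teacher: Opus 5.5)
Your proof is correct and follows the paper's argument: the rank-one reduction when $\gamma=0$, and for $\gamma\neq0$ the factorization $T=D_sT_s$ with $s_n(T)\le s_n(D_s)\|T_s\|=\|T_s\|s^n$. You also spell out the trivial direction (ii)$\Rightarrow$(i), which the paper leaves implicit; the Hilbert--Schmidt cross-check is not needed.
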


\begin{proof}
If \(\gamma=0\), by {\it the proof of Theorem \ref{thm:gamma=0}}, \(T\) is a rank-one operator, hence  \(T\in S_p(\F^2)\) for every \(0<p<\infty\).

 Assume \(\gamma\neq 0\) and \(T\) is compact, then by Theorem \ref{thm:fp-fq compact}, the
two strict inequalities \eqref{eq:strict-inequality} hold. As in the proof of Proposition \ref{prop:fp-fq-compact},
we can choose  \(0<s<1\) such that \(T_s: \F^2\to \F^2\) is bounded and
\(T=D_sT_{s}.\)
By Lemma~\ref{lem:dilation} and the standard singular value inequality, we have
\[
  s_n(T)=s_n(D_s T_{s})\le s_n(D_s)\|T_{s}\|
  =\|T_{s}\|s^n,
  \qquad n=0,1,2,\ldots,
\]
here \(s_n(T), n=0,1,2,\ldots,\) denotes the singular values (counted with multiplicity) of \(T\) arranged in decreasing order.
Hence
\[
  \sum_{n=0}^\infty s_n(T)^p
  \le
  \|T_{s}\|^p\sum_{n=0}^\infty s^{np}<\infty
\]
for every \(0<p<\infty\). Thus \(T\in S_p(\F^2)\) for every \(0<p<\infty\).
\end{proof}


\begin{thebibliography}{99}

\bibitem{Bargmann}
V.~Bargmann,
\emph{On a Hilbert space of analytic functions and an associated integral
transform, Part~I},
Comm. Pure Appl. Math. \textbf{14} (1961), no.~3, 187--214.

\bibitem{DZ1}
X.~Dong and K.~Zhu,
\emph{Canonical integral operators on the Fock space},
Math. Z. \textbf{306} (2024), Paper No.~64.

\bibitem{DZ2}
X.~Dong and K.~Zhu,
\emph{Canonical integral operators on the Fock space II},
arXiv:2509.16672 [math.FA], 2025.

\bibitem{Folland}
G.~B. Folland,
\emph{Harmonic Analysis in Phase Space},
Annals of Mathematics Studies, vol.~122,
Princeton University Press, Princeton, NJ, 1989.

\bibitem{HK}
P.~V. Hai and L.~H. Khoi,
\emph{Complex symmetry of weighted composition operators on the Fock space},
J. Math. Anal. Appl. \textbf{433} (2016), 1757--1771.

\bibitem{Howe}
R.~Howe,
\emph{The oscillator semigroup},
in \emph{The Mathematical Heritage of Hermann Weyl}
(Durham, NC, 1987),
Proc. Sympos. Pure Math., vol.~48,
American Mathematical Society, Providence, RI, 1988, pp.~61--132.







\bibitem{Lieb}
E.~H. Lieb,
\emph{Gaussian kernels have only Gaussian maximizers},
Invent. Math. \textbf{102} (1990), no.~1, 179--208.

\bibitem{Neretin} Yurii A. Neretin,  \emph{Lectures on
Gaussian integral operators and classical groups}, European Mathematical Society, 2011.



\bibitem{SZ}
Y. ~Sang and L.~Zhao, \emph{Complex symmetric weighted composition operators on the Fock space}, preprint.

\bibitem{Zhao2014}
L.~Zhao,
\emph{Unitary weighted composition operators on the Fock space of $\C^n$},
Complex Anal. Oper. Theory \textbf{8} (2014), 581--590.

\bibitem{Zhao2017}
L.~Zhao, \emph{Normal weighted composition operators on the Fock space of \(C^N\)} , Operators and Matrices \textbf{11(3)}(2017): 697-704.



\bibitem{Zhu2012}
K.~Zhu,
\emph{Analysis on Fock Spaces},
Graduate Texts in Mathematics, vol.~263,
Springer, New York, 2012.



\end{thebibliography}
\end{document}